\theoremstyle{plain}
\newcommand{\bA}{{\mathbb A}}
\newcommand{\bF}{{\mathbb F}}
\newcommand{\bG}{{\mathbb G}}
\newcommand{\bP}{{\mathbb P}}
\newcommand{\bQ}{{\mathbb Q}}
\newcommand{\bR}{{\mathbb R}}
\newcommand{\bZ}{{\mathbb Z}}
\newcommand{\cA}{{\mathscr A}}
\newcommand{\cB}{{\mathscr B}}
\newcommand{\cF}{{\mathscr F}}
\newcommand{\cZ}{{\mathscr Z}}
\newcommand{\caO}{{\mathcal O}}
\newcommand{\fM}{{\mathfrak M}}
\newcommand{\fp}{{\mathfrak p}}
\DeclareSymbolFont{cyrletters}{OT2}{wncyr}{m}{n}
\DeclareMathSymbol{\Sha}{\mathalpha}{cyrletters}{"58}
\DeclareMathOperator{\pr}{pr}
\DeclareMathOperator{\Hom}{Hom}
\DeclareMathOperator{\GL}{GL}
\DeclareMathOperator{\SL}{SL}
\newcommand{\tr}{{\rm tr}}
\newcommand{\matzz}[4]{\left(
\begin{array}{cc} #1 & #2 \\ #3 & #4 \end{array} \right)}
\newcommand{\inv}{{\rm inv}}
\DeclareMathOperator{\Frob}{Frob}
\DeclareMathOperator{\R}{R}
\DeclareMathOperator{\Gal}{G}
\newcommand{\nr}{{\rm nr}}
\DeclareMathOperator{\Regul}{Reg}
\newtheorem*{rep@theorem}{\rep@title}
\newcommand{\newreptheorem}[2]{%
\newenvironment{rep#1}[1]{%
 \def\rep@title{#2 \ref{##1}}%
 \begin{rep@theorem}}%
 {\end{rep@theorem}}}
\newtheorem{thm}{Theorem}[section]
\newtheorem{prop}[thm]{Proposition}
\newtheorem{cor}[thm]{Corollary}
\newtheorem{lm}[thm]{Lemma}
\newtheorem{sublm}[thm]{Sublemma}
\theoremstyle{definition}
\newtheorem{Def}[thm]{Definition}
\newtheorem{rem}[thm]{Remark}
\newenvironment{pro*}[1][Proof]{{\it{#1:}} }{}
\newcommand\Indd{\mathop{ \rm Ind}}
\newcommand\cIndd{\mathop{ \rm c-Ind}}
\newcommand\bk{\mathop{ \bar{k} }}
\newcommand\rar{ \rightarrow }
\newcommand\tar{ \twoheadrightarrow }
\newcommand\har{ \hookrightarrow }
\newcommand\Rar{ \Rightarrow }
\newcommand\LRar{ \Leftrightarrow }
\newcommand\longrar{\longrightarrow}
\newcommand\Res{\mathop{ \rm Res}}
\newcommand\charac{\mathop{ \rm char}}
\newcommand{\sm}{{\,\smallsetminus\,}}
\newcommand\N{\rm N}
\newcommand\Tr{\rm Tr}
\newcommand\coh{\rm H}
\newcommand\Ind{\mathop{ \rm Ind}}
\newcounter{absatzcounter}[section]
\numberwithin{equation}{section}
\begin{document}

\title{Affine Deligne-Lusztig varieties of higher level and the local Langlands correspondence for $GL_2$}
\author{A. Ivanov\footnote{The author was partially supported by ERC starting grant 277889 ''Moduli spaces of local $G$-shtukas''} \footnote{email: ivanov@ma.tum.de}}
\maketitle

\abstract{In the present article we define coverings of affine Deligne-Lusztig varieties attached to a connected reductive group over a local field of characteristic $p > 0$. In the case of $\GL_2$, the unramified part of the local Langlands correspondence is realized in the $\ell$-adic cohomology of these varieties. We show this by giving a detailed comparison with the realization of local Langlands via cuspidal types by Bushnell-Henniart. All proofs are purely local.}

\section{Introduction}

The classical Deligne-Lusztig theory aims for a geometric construction of representations of finite groups of Lie-type. In \cite{DL}, Deligne and Lusztig constructed the so-called Deligne-Lusztig varieties attached to a connected reductive group over a finite field and could show that any irreducible representation of the group of $\bF_q$-valued points occurs in the $\ell$-adic cohomology of these varieties. Since then one was trying to find similar constructions in the affine setting, aiming for a geometric realization of the local Langlands correspondence. However, usual geometric realizations of local Langlands make use of $p$-adic methods, formal schemes and adic spaces, also using the global theory. In the present article we introduce a very natural affine analog of Deligne-Lusztig varieties of arbitrary level attached to a connected reductive group over a local field $F$ of positive characteristic. Using these varieties we realize the unramified part of the local Langlands correspondence for $\GL_2$ 
over $F$ using only schemes over $\bF_q$ and purely local methods. Moreover, we will give a detailed comparison of our construction with the theory of cuspidal types of Bushnell-Henniart \cite{BH} and on the 'algebraic' side we will show an improvement of the Intertwining theorem \cite{BH} 15.1.

To begin with, let $k$ be a finite field with $q$ elements, $\bar{k}$ its algebraic closure and let $\sigma$ denote the Frobenius automorphism $x \mapsto x^q$ of $\bar{k}$. Let $F = k((t))$ resp. $L = \bar{k}((t))$ be the fields of Laurent series over $k$ resp. $\bar{k}$ and $\caO_F = k\llbracket t \rrbracket, \caO_L = \bar{k}\llbracket t \rrbracket$ their rings of integers. Let $\fp_L \subseteq \caO_L$ denote the maximal ideal. We extend $\sigma$ to an automorphism of $L$ by setting $\sigma(\sum_n a_nt^n) = \sum_n \sigma(a_n)t^n$.

Let $G$ be a connected reductive group over $F$ and let $\bG$ be a smooth model of $G$ over $\caO_F$. It is a central problem to realize smooth representations of the locally compact group $G(F)$ in the $\ell$-adic cohomology of certain schemes (or formal schemes, or adic spaces, ...) over $k$ (where $\ell$ is prime to $\charac(k)$). Usually such schemes come up with an action of $G(F) \times T(F)$, where $T$ is some maximal torus of $G$ and as a consequence the representations of $G(F)$ occurring in their $\ell$-adic cohomology are parametrized by characters of $T(F)$, lying in sufficiently general position. After the fundamental work of Deligne and Lusztig \cite{DL}, which followed the pioneering example of Drinfeld concerning $\SL_2(k)$, and deals with representations of the finite group $\bG(k)$, many generalizations of their ideas aiming a construction of representations of $\bG(\caO_F/t^r)$ for $r \geq 2$ resp. of $G(F)$ were made. We give some examples. In \cite{Lu} Lusztig suggested such construction 
(without proofs) and more recently he gave proofs in \cite{Lu2}. (A minor variation of) this construction was worked out for division algebras by Boyarchenko \cite{Bo} and Chan \cite{Ch} (see also \cite{BW}). A further closely related approach, was given by Stasinski in \cite{St}, who suggested a method to construct the so called extended Deligne-Lusztig varieties attached to $\bG(\caO_F/t^r)$. The advantages of our consturction are that it (i) has a quite simple definition in terms of the Bruhat-Tits building of $G$, (ii) establishes a direct link with affine Deligne-Lusztig varieties, which are well-studied in various contexts, which in particular allow to use the whole combinatoric machinery developped for their study, (iii) covers all levels (also level zero) simultaneously.

A starting point for our construction is Rapoport's definition of affine Deligne-Lusztig varieties in \cite{Ra} Definition 4.1. We recall this definition (in the Iwahori case). Let $\cB_L$ be the Bruhat-Tits building of the adjoint group $G_{L,ad}$. The Bruhat-Tits building of $G_{ad}$ over $F$ can be identified with the $\sigma$-invariant subset of $\cB_L$. Let $S$ be a maximal $L$-split torus in $G$, which is defined over $F$ (such a torus exists due to \cite{BT2} 5.1.12). Let $I \subseteq G(L)$ be the Iwahori subgroup attached to a $\sigma$-stable alcove in the apartment corresponding to $S$. Let $\cF$ be the affine flag manifold of $G$, seen as an ind-scheme over $k$. Its $\bk$-points can be identified with $G(L)/I$. Let $\tilde{W}$ denote the extended affine Weyl group of $G$ attached to $S$. The Bruhat decomposition of $G(L)$ induces the invariant position map

\[ \inv \colon \cF(\bk) \times \cF(\bk) \rar \tilde{W}. \]

\noindent For $w \in \tilde{W}$ and $b \in G(L)$ the affine Deligne-Lusztig variety attached to $w$ and $b$ is the locally closed subset 

\[ X_w(b) = \{ gI \in \cF \colon \inv(gI, b\sigma(g)I) = w \} \]

\noindent of $\cF$, which is given its reduced induced sub-Ind-scheme structure. Let $J_b$ be the $\sigma$-stabilizer of $b$, i.e., the algebraic group over $F$ defined by

\[ J_b(R) = \{ g \in G(R \otimes_F L) \colon g^{-1}b\sigma(g) = b \} \]

\noindent for any $F$-algebra $R$. Then $J_b(F)$ acts on $X_w(b)$. 

We sketch now the construction of natural covers of these varieties, which still admit an action by $J_b(F)$. The details are given in Section \ref{sec:sec_coverings_of_ADLV}. Let $\Phi = \Phi(G,S)$ be the relative root system. We see $0$ as the 'root' corresponding to the centralizer $T$ of $S$ in $G$ (as $G$ is quasi-split, this is a maximal torus). After choosing a $\sigma$-stable base point $x$ in $\cB_L$, with a concave function $f$ on $\Phi \cup \{0\}$ (for a definition cf. Section \ref{sec:AFM_and_covers}), one can associate a subgroup $G(L)_f \subseteq G(L)$. In \cite{Yu}, Yu defined a smooth model $\underline{G}_f$ of $G_L$ over $\caO_L$, such that $\underline{G}_f(\caO_L) = G(L)_f$. 
Assume that $G(L)_f \subseteq I$ and that $G(L)_f$ is $\sigma$-stable. Then $\underline{G}_f$ descends to a smooth group scheme over $\caO_F$. Further, $G(L)/G(L)_f$ is the set of $\bk$-points of an Ind-scheme $\cF^f$, which defines a natural cover of $\cF$ by \cite{PR} Theorem 1.4. Moreover, if $G(L)_f$ is normal in $I$, then $\cF^f \rar \cF$ is a (right) principal homogeneous space under $I/G(L)_f$. There is a map

\[ \inv^f \colon \cF^f(\bk) \times \cF^f(\bk) \rar D_{G,f}, \]

\noindent which covers the map $\inv$. Here $D_{G,f}$ is a set of representatives of double cosets of $G(L)_f$ in $G(L)$. For $w_f \in D_{G,f}$, $b \in G(L)$, we define the \emph{affine Deligne-Lusztig variety of level $f$} attached to $w_f$ and $b$ as the locally closed subset

\[ X^f_{w_f}(b) = \{ \bar{g} = gG(L)_f \in \cF^f(\bar{k}) \colon \inv^f(\bar{g},b\sigma(\bar{g})) = w_f  \}, \] 

\noindent endowed with its induced reduced sub-Ind-scheme structure  (in fact, this is a scheme locally of finite type over $k$). Assume $G(L)_f$ is normal in $I$. Then $I$ acts on $D_{G,f}$ by $\sigma$-conjugation $w_f \mapsto i^{-1}w_f \sigma(i)$, hence we can consider the stabilizer $I_{f,w_f} \subseteq I$ of $w_f$ under this action. It acts on $X^f_{w_f}(b)$ on the right and this action commutes with the left action of $J_b(F)$. Moreover this $I_{f,w_f}$-action can be extended to an action of $Z(F)I_{f,w_f}$, where $Z$ is the center of $G$. Thus we obtain the desired variety $X^f_{w_f}(b)$ with an action of $G(F) \times Z(F)I_{f,w_f}$. 

We study further properties of $I_{w_f,f}$ and $X^f_{w_f}(b)$ for general $G$ elsewhere. The rest of the paper is devoted to the detailed study of the case $G = \GL_2$. Now we explain our results in this case. As the levels indexed by concave functions are cofinal, we restrict attention to very special functions $f_m$ (cf. Sections \ref{sec:AFM_and_covers},\ref{sec:generalities_GL_2}) for integers $m \geq 0$ and write $I^m$ instead of $G(L)_{f_m}$, $X_{w_m}^m(1)$ instead of $X^{f_m}_{w_{f_m}}(b)$, etc.  We determine the varieties $X_{w_m}^m(1)$ and the attached representations of $G(F)$ and compare our results with the algebraic construction of the same representations in \cite{BH} using the theory of cuspidal types. We sketch our results here; for a precise treatment cf. Section \ref{sec:def_of_reps}. Let $E/F$ be the unramified extension degree $2$. If the image of $w_m$ in the finite Weyl group is non-trivial, then $Z(F)I_{m,w_m}$ has a natural quotient isomorphic to $E^{\ast}$, and the $Z(F)I_{m,w_m}$-
action in the $\ell$-adic cohomology of $X_{w_m}^m(1)$ factors through an $E^{\ast}$-action. In this way we obtain a $G(F)$-representation in the spaces $\coh_c^i(X_{w_m}^m(1), \overline{\bQ}_{\ell})[\chi]$, where $\chi$ goes through smooth $\overline{\bQ}_{\ell}^{\ast}$-valued characters of $E^{\ast}$. It turns out that if $\chi$ is minimal of level $m$, lies in sufficiently general position, then there is an integer $i_0$, such that $\coh_c^i(X_{w_m}^m(1), \overline{\bQ}_{\ell})[\chi] = 0$ for all $i \neq i_0$ and

\[ R_{\chi} =  \coh_c^{i_0}(X_{w_m}^m(1), \overline{\bQ}_{\ell})[\chi] \]

\noindent is an unramified irreducible cuspidal representation of $G(F)$, of level $m$ (we also define $R_{\chi}$ for $\chi$ non-minimal). Let $\bP_2^{\nr}(F)$ be the set of all isomorphism classes of admissible pairs over $F$ attached to $E/F$ (cf. \cite{BH} 18.2). Let $\cA_2^{\nr}(F)$ be the set of all isomorphism classes of unramfied irreducible cuspidal representations of $G(F)$. We defined a map

\begin{equation} \label{eq:R_realization_for_intro}
R \colon \bP_2^{\nr}(F) \rar \cA_2^{\nr}(F), \quad (E/F, \chi) \mapsto R_{\chi} .
\end{equation}

\noindent As a consequence of our trace computations in Sections \ref{sec:trace_I_preliminaries}-\ref{sec:trace_III_Hm}, we see that this map is injective (cf. Corollary \ref{cor:xi_chi_determines_chi}). Using the theory of cuspidal types and strata, Bushnell-Henniart attached to an admissible pair $(E/F,\chi)$ an irreducible cuspidal $G(F)$-representation $\pi_{\chi}$ (\cite{BH} \S19; we recall the construction briefly in Section \ref{sec:second_part_of_the_proof}). The tame parametrization theorem (\cite{BH} 20.2 Theorem) then shows that the map 

\begin{equation*} 
\bP_2^{\nr}(F) \stackrel{\sim}{\longrar} \cA_2^{\nr}(F), \quad (E/F, \chi) \mapsto \pi_{\chi}
\end{equation*}

\noindent is a bijection (also for even $q$). Here is our main result (which also works for even $q$).

\begin{repthm}{thm:main_thm} 
Let $(E/F,\chi)$ be an admissible pair. The representation $R_{\chi}$ is irreducible cuspidal, unramified, has level $\ell(\chi)$ and central character $\chi|_{F^{\ast}}$. Moreover, $R_{\chi}$ is isomorphic to $\pi_{\chi}$. In particular, the map \eqref{eq:R_realization_for_intro} is a bijection.
\end{repthm}

The proof is purely local. Two ideas in the proof follow \cite{Bo},\cite{BW}: it is Boyarchenko's trace formula (cf. Lemma \ref{lm:Boyarchenko_trace_formula}) and maximality of certain closed subvarieties of $X_{w_m}^m(1)$ (note that $X_{w_m}^m(1)$ itself is not maximal due to the presence of a 'level $0$ part'). The rest of the proof is independent of \cite{Bo},\cite{BW}.

Finally, we remark that for $G = \GL_2$ and $b$ superbasic, $J_b(F) = D^{\ast}$ for $D$ a quaternion algebra over $F$ and the varieties $X_{x_m}^m(b)$ seem to be very close (but unequal) to the varieties studied by Chan in \cite{Ch} (cf. Section \ref{sec:superbasic_case}).

\subsection*{Outline of the paper}

In Section \ref{sec:sec_coverings_of_ADLV} we define affine Deligne-Lusztig varieties for a connected reductive group $G$ of level attached to a concave function on the roots. In Section \ref{sec:Computations_for_GL2} we compute these varieties for $G = \GL_2$, $b = 1$ and determine their $\ell$-adic cohomology. In Section \ref{sec:def_of_reps} we recall the setup and state our main result for $\GL_2$, Theorem \ref{thm:main_thm}. After performing necessary trace calculations in Sections \ref{sec:trace_I_preliminaries}-\ref{sec:trace_III_Hm}, we compare our construction with that in \cite{BH} in Sections \ref{sec:first_part_of_the_proof}-\ref{sec:second_part_of_the_proof}, and finish the proof of Theorem \ref{thm:main_thm}.

\subsection*{Acknowledgments}

The author is especially grateful to Eva Viehmann, Christian Liedtke and Stephan Neupert for very helpful comments discussions concerning this work. Also he is grateful to Paul Hamacher, Bernhard Werner and other people for helpful remarks and interesting discussions concerning this work. Further, the author wants to thank Michael Rapoport and Ulrich G\"ortz for patiently introducing him to the field of affine Deligne-Lusztig varieties some years ago.


\section{Coverings of affine Deligne-Lusztig varieties}\label{sec:sec_coverings_of_ADLV}

The goal of this section is to define coverings of affine Deligne-Lusztig varieties. 


\subsection{Concave functions and smooth models} \label{sec:AFM_and_covers}

Let $G$ be a connected reductive group over $F$. As $\bar{k}$ is algebraically closed, $G_L$ is quasi-split over $L$. Let $S \subseteq G$ be a maximal $L$-split torus, which is defined over $F$. Let $T = \cZ_G(S)$ be the centralizer of $S$. As $G_L$ is quasi-split, $T$ is a maximal torus. Let $\Phi = \Phi(G_L,S_L)$ denote the relative root system. 
For $a \in \Phi$, write $U_a$ for the corresponding root subgroup and let $U_0 = T$. Let $\cB_L$ be the Bruhat-Tits building of $G_L$ and let $\cA_S$ be the apartment corresponding to $S_L$. We fix a $\sigma$-stable base alcove $\underline{a}$ contained in $\cA_S$ and let $x$ be one of its vertices. Then $x$ defines a filtration of $U_a(L)$ by subgroups $U_a(L)_{x,r}$ ($r \in \bR$) for $a \in \Phi$ (cf. \cite{BT} \S6.2). 

Moreover, choose an admissible schematic filtration on tori in the sense of Yu \cite{Yu} \S4. This gives a filtration $U_0(L)_{x,r} = T(L)_r$ on $T$. If $G$ satisfies condition (T) from \cite{Yu} 4.7.1, then this filtration is independent of the choice of the admissible filtration and coincide with the Moy-Prasad filtration on $T(L)$, cf. \cite{Yu} Lemma 4.7.4. Moreover, $G$ satisfies (T) if it is either simply connected or adjoint or split over a tamely ramified extension \cite{Yu} 8.1. We do not use this in the following.

Let $\tilde{\bR} = \bR \cup \{ r+ \colon r \in \bR\} \cup \{\infty\}$ be the monoid as in \cite{BT} 6.4.1. A function $f \colon \Phi \cup \{0\} \rar \tilde{\bR}$ is called \emph{concave} (\cite{BT} 6.4), if 
\[ \sum_{i = 1}^s f(a_i) \geq f(\sum_{i = 1}^s a_i). \]

\noindent Fix a concave function $f \colon \Phi \cup \{ 0 \} \rar \tilde{\bR}_{\geq 0} \sm \{\infty\}$. Let $G(L)_{x,f}$ be the subgroup of $G(L)$ generated by $U_a(L)_{x,f(a)}$, $a \in \Phi \cup \{ 0 \}$. By \cite{Yu} Theorem 8.3, there is a unique smooth model $\underline{G}_{x,f}$ of $G_L$ over $\caO_L$ such that $\underline{G}_{x,f}(\caO_L) = G(L)_{x,f}$. Moreover, if $G(L)_{x,f}$ is $\sigma$-stable, then $\underline{G}_{x,f}$ descends to a group scheme defined over $\caO_F$ (\cite{Yu} 9.1). We denote it again by $\underline{G}_{x,f}$.

Let $I \subseteq G(L)$ be the Iwahori subgroup associated with $\underline{a}$ and let $\Phi^+ \subseteq \Phi$ denote the set of positive roots determined by $\underline{a}$. Let $f_I$ be the concave function on $\Phi \cup \{ 0 \}$ defined by 

\[ f_I(a) = \begin{cases} 0 & \text{for $a \in \Phi^+ \cup \{0\}$ } \\ f_I(a) = 0+ & \text{for $a \in \Phi^-$}.\end{cases} \] 

\noindent Then $G(L)_{x,f_I} = I$ (cf. \cite{Yu} 7.3). For $m \geq 0$ let $f_m \colon \Phi \cup \{ 0 \} \rar \tilde{\bR}_{\geq 0} \sm \{ \infty \}$ be the concave function defined by

\[ f_m(a) = \begin{cases} m & \text{if $a \in \Phi^+$} \\ m^+ & \text{if $a \in \Phi^- \cup \{ 0 \}$.} \end{cases} \]

\noindent Write $I^m = G(L)_{x,f_m}$.

\begin{lm}
For $m \geq 0$, $I^m$ is normal in $I$ and $I^m$ is $\sigma$-stable. In particular, $I^m$ admits a unique smooth model $\underline{G}_{x,f_m}$. This model is already defined over $\caO_F$.
\end{lm}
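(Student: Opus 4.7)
The plan is to establish normality of $I^m$ in $I$ and $\sigma$-stability of $I^m$ by direct root-subgroup computations, then invoke the theorems of Yu recalled in the paragraphs preceding the lemma to obtain the smooth-model statement for free.

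For normality, I would present $I$ as generated by $\{U_a(L)_{x, f_I(a)}\}_{a \in \Phi \cup \{0\}}$ and $I^m$ by $\{U_a(L)_{x, f_m(a)}\}_{a \in \Phi \cup \{0\}}$ (with the convention $U_0 = T$), so that it suffices to check that the commutator $(u, v)$ lies in $I^m$ for every generator $u$ of $I$ and every generator $v$ of $I^m$. The Bruhat--Tits commutator relations (\cite{BT} 6.2) yield inclusions of the form
\[
(U_a(L)_{x, r},\, U_b(L)_{x, s}) \ \subseteq\ \prod_{\substack{i, j \geq 1 \\ ia + jb \in \Phi \cup \{0\}}} U_{ia+jb}(L)_{x,\, ir + js},
\]
where a summand with $ia + jb = 0$ is the subgroup $T(L)_{ir+js}$, together with $(T(L)_r, U_b(L)_{x, s}) \subseteq U_b(L)_{x, r+s}$. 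Inserting $r = f_I(a) \in \{0, 0+\}$ and $s = f_m(b) \in \{m, m+\}$, a short case analysis shows $ir + js \geq f_m(ia + jb)$ in each case, using only the arithmetic of $\tilde\bR_{\geq 0}$ and the bound $j \geq 1$. Hence every such commutator lies in $I^m$, and normality follows.

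For $\sigma$-stability, $\sigma$-invariance of $\underline{a}$ forces $\sigma(\Phi^+) = \Phi^+$, so $f_m$ itself is $\sigma$-invariant. Choosing $x$ to be a $\sigma$-fixed vertex of $\underline{a}$ (automatic in the $\GL_2$ setting that is the focus of the rest of the paper), the equivariance $\sigma(U_a(L)_{x,r}) = U_{\sigma(a)}(L)_{x,r}$ of the Moy--Prasad/Yu filtration shows that $\sigma$ merely permutes the generators of $I^m$, so $\sigma(I^m) = I^m$. The remaining assertion about the smooth model $\underline{G}_{x, f_m}$---existence and uniqueness over $\caO_L$, and descent to $\caO_F$---is then immediate from Yu \cite{Yu} Theorem 8.3 and \S 9.1, both already quoted in the excerpt.

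The main expected obstacle is the case analysis underlying the commutator inclusion in the normality step. The case $a + b = 0$ has to be handled separately because it produces a torus contribution at the ``root'' $0$, and one must track the extended-real arithmetic of $r$ versus $r+$ scrupulously so that each commutator inclusion lands in the tighter subgroup $I^m$ rather than only in $I$. Neither pitfall requires any idea beyond the Bruhat--Tits commutator calculus.
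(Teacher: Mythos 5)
Your overall approach matches the paper's: generate $I$ and $I^m$ by the filtration subgroups $U_a(L)_{x,f_I(a)}$ and $U_a(L)_{x,f_m(a)}$, reduce normality to a commutator computation via \cite{BT} (6.2.1) V3, compare filtration depths, verify $\sigma$-stability by equivariance of the root decomposition, and finish with Yu's Theorem 8.3 and \S 9.1. That is exactly the paper's strategy.

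There is, however, one genuine imprecision in the normality step. You claim that the inequality $ir + js \geq f_m(ia+jb)$ follows ``using only the arithmetic of $\tilde\bR_{\geq 0}$ and the bound $j \geq 1$.'' It does not: take $r = f_I(a) = 0$ (so $a \in \Phi^+ \cup \{0\}$), $s = f_m(b) = m$ (so $b \in \Phi^+$), and $j = 1$; then $ir + js = m$, while $f_m(ia+jb) = m^+$ whenever $ia+jb \in \Phi^- \cup \{0\}$, so the inequality fails numerically. What saves the argument --- and what the paper's proof makes explicit --- is the root-system observation that a combination $pa + qb$ with $p, q > 0$, $a \in \Phi^+ \cup \{0\}$, $b \in \Phi^+$ can never lie in $\Phi^- \cup \{0\}$; so this bad configuration simply does not occur. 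That structural fact, not extended-real arithmetic, is what makes the commutator land in $I^m$. Your discussion of the obstacle concentrates on the case $pa+qb = 0$ (the torus contribution), but that case is actually harmless, since $pa+qb=0$ forces $a$ and $b$ to have opposite sign, putting one of $r, s$ on the ``$+$'' side; the case you needed to worry about is the one above. Incidentally, the paper's own proof contains a small typo (it writes the excluded configuration with $a \in \Phi^- \cup \{0\}$ where it should read $a \in \Phi^+ \cup \{0\}$), but the conclusion of impossibility is correct in either reading only for the latter. Your $\sigma$-stability argument is the same as the paper's and is in fact slightly more careful in noting that $x$ should be a $\sigma$-fixed vertex.
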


\begin{proof}
$I$ (resp. $I^m$) is generated by $U_a(L)_{x,f_I(a)}$ (resp. $U_a(L)_{x,f_m(a)}$) for $a \in \Phi \cup \{ 0 \}$. To show normality, it is enough to show that for any roots $a,b \in \Phi \cup \{0\}$, the commutator $(U_a(L)_{x,f_I(a)}, U_b(L)_{x,f_m(b)})$ is contained in $I^m$. By \cite{BT} (6.2.1) V3 (we can treat $0$ as a root), $(U_a(L)_{x,f_I(a)}, U_b(L)_{x,f_m(b)})$ is contained in the subgroup generated by 
$U_{pa + qb}(L)_{pf_I(a) + qf_m(b)}$ for $p,q > 0$ such that $pa + qb \in \Phi \cup \{ 0 \}$. Now $qf_m(b) \geq m$, hence $U_{pa + qb}(L)_{pf_I(a) + qf_m(b)} \not\subseteq I^m$ can only happen if $f_I(a) = 0$, $f_m(b) = m$, $f_m(pa+qb) = m^+$. This is equivalent to $a \in \Phi^- \cup \{0\}$, $b \in \Phi^+$, $a+b \in \Phi^- \cup \{0\}$. This is impossible, hence $U_{pa + qb}(L)_{pf_I(a) + qf_m(b)} \subseteq I^m$ and the normality is shown. Further, $I^m$ is generated by $U_a(L)_{f_m(a)}$ for $a \in \Phi \cup \{0\}$, hence $\sigma(I^m)$ is generated by $\sigma(U_a(L)_{f_m(a)}) = U_{\sigma(a)}(L)_{f_m(a)}$. But as $I$ is $\sigma$-stable, we have $a \in \Phi^+ \LRar \sigma(a) \in \Phi^+$ and hence $f_m(a) = f_m(\sigma(a))$.
\end{proof}

Consider the loop group $LG$, which is the functor on the category of $k$-algebras,

\[ LG \colon R \mapsto G(R((t))). \]

\noindent Assume the concave function $f$ is such that $G(L)_{x,f}$ is $\sigma$-invariant. Let $L^+\underline{G}_{x,f}$ be the functor on the category of $k$-algebras defined by 

\[ L^+\underline{G}_{x,f} \colon R \mapsto \underline{G}_{x,f}(R[[t]]). \]

\noindent Then by \cite{PR} Theorem 1.4 the quotient of fpqc-sheaves

\[ \cF^f = LG/L^+\underline{G}_{x,f} \] 

\noindent is represented by an Ind-$k$-scheme of ind-finite type over $\bar{k}$ and its $\bar{k}$-points are $\cF^f(\bar{k}) =  G(L)/G(L)_{x,f}$. Moreover, if $g \leq f$ are two concave functions as above, then we have a natural projection $\cF^f \tar \cF^g$. We write $\cF = \cF^{f_I}$ for the affine flag manifold associated with $\underline{G}_{x,f_I}$, the smooth model of $I$ and $\cF^m = \cF^{f_m}$ for $m \geq 0$.


\subsection{Affine Deligne-Lusztig varieties and covers} \label{sec:def_of_cov_of_ADLV}

We keep the notations from Section \ref{sec:AFM_and_covers}. We fix a concave function $f \colon \Phi \cup \{ 0 \} \rar \tilde{\bR}_{\geq 0} \sm \{ \infty \}$, such that $f \geq f_I$, i.e., $G(L)_{x,f} \subseteq I$ and s.t. $G(L)_{x,f}$ is $\sigma$-invariant, i.e., $\underline{G}_{x,f}$ is defined over $\caO_F$. We write $I^f = G(L)_{x,f}$. There are natural $\sigma$-actions on $\cF(\bk),\cF^f(\bk)$, which are compatible with natural projections.

Let $\N_T$ be the normalizer of $T$ in $G$. Let $W = \N_T(L)/T(L)$ be the finite Weyl group associated with $S$ and $\tilde{W}$ the extended affine Weyl group. If $\Gamma$ denotes the absolute Galois group of $L$, then $\tilde{W}$ sits in the short exact sequence 

\[ 0 \rar X_{\ast}(T)_{\Gamma} \rar \tilde{W} \rar W \rar 0. \]

\noindent Then the Iwahori-Bruhat decomposition states that 

\[ G(L) = \coprod_{w \in \tilde{W}} I\dot{w}I, \]

\noindent where $\dot{w}$ is any lift of $w$ to $N(L)$. Consider now the set of double cosets 

\[ D_{G,f} = G(L)_{x,f}\backslash G(L)/G(L)_{x,f},\] 

\noindent equipped with the natural projection map $D_{G,f} \tar I\backslash G(L)/I \cong \tilde{W}$. If $m \geq 0$, we also write $D_{G,m}$ instead of $D_{G,f_m}$. At least for $w$ 'big' enough, the fiber $D_{G,f}(w)$ over a fixed $w \in \tilde{W}$ can be given the structure of a finite-dimensional affine variety over $\bar{k}$, by parametrizing it using subquotients of (finite) root subgroups. As this seems quite technical and as in this article we only need only the case $G = GL_2$ (cf. \eqref{eq:explicit_DNK_param_GL_2}), we omit the corresponding result in this article. We obtain a map, which covers the classical relative position map.

\begin{Def}
Define the map 
\[ \inv^f \colon \cF^f(\bk) \times \cF^f(\bk) \rar D_{G,f} \]
\noindent on $\bk$-points by $\inv^f(xG(L)_{x,f},yG(L)_{x,f}) = w_f$, where $w_f$ is the double $G(L)_{x,f}$-coset containing $x^{-1}y$.
\end{Def}

\noindent We come to our main definition.

\begin{Def} For $f \geq f_I$ concave, such that $I^f$ is $\sigma$-invariant, $b \in G(L)$, and $w_f \in D_{G,f}$ we define the \emph{affine Deligne-Lusztig variety of level $f$} associated with $b, w_f$ as

\[ X^f_{w_f}(b) = \{ \bar{g} = gG(L)_{x,f} \in \cF^f(\bar{k}) \colon \inv^f(\bar{g},b\sigma(\bar{g})) = w_f  \}, \] 

\noindent with its induced reduced sub-Ind-scheme structure.
\end{Def}

We write $X_{w_m}^m(b)$ instead of $X^{f_m}_{w_m}(b)$. As usual, $X^f_{w_f}(b)$ is equipped with two group actions. For $b \in G(L)$, let $J_b$ be the $\sigma$-stabilizer of $b$, i.e., the algebraic group over $F$ defined by

\[ J_b(R) = \{ g \in G(R \otimes_F L) \colon g^{-1}b\sigma(g) = b \} \]

\noindent for any $F$-algebra $R$. Then $J_b(F)$ acts on $X^f_{w_f}(b)$ for any $f$ and $w_f$. If $f \geq f^{\prime}$ and $w_f$ lies over $w_{f^{\prime}}$, then $X^f_{w_f}(b)$ lies over $X^{f^{\prime}}_{w_{f^{\prime}}}(b)$ and the $J_b(F)$-actions are compatible. 

To describe the second group action, assume additionally that $G(L)_{x,f}$ is normal in $I$. For $w \in \tilde{W}$, we have a left and a right $I/I^f$-action on $D_{G,f}(w)$ by multiplication. We obtain the (right) $I/I^f$-action on $D_{G,f}(w)$ by $(i,w_f) \mapsto i^{-1} w_f \sigma(i)$. 

\begin{lm}\label{lm:trivial_props} 
Assume $I^f$ is normal in $I$. Let $b \in G(L), w \in \tilde{W}$ and $w_f \in D_{G,f}(w)$. 
\begin{itemize}
\item[(i)]   $X^f_{w_f}(b)$ is locally of finite type over $k$.
\item[(ii)] For every $g \in G(L)$, the map $(h,xI^f) \mapsto (g^{-1}hg, g^{-1}xI^f)$ defines an isomorphism of pairs $(J_b(F),X^f_{w_f}(b)) \stackrel{\sim}{\longrar} (J_{g^{-1}b\sigma(g)}(F), X^f_{w_f}(g^{-1}b\sigma(g)))$.
\item[(iii)] For $i \in I$, the map $xI^f \mapsto xiI^f$ defines an isomorphism $X^f_{w_f}(b) \stackrel{\sim}{\longrar} X^f_{i^{-1}w_f\sigma(i)}(b)$.
\end{itemize}
\end{lm}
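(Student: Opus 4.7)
The plan is to dispatch (ii) and (iii) by direct unwindings of the relative-position definition, and to reduce (i) to Rapoport's finiteness at the Iwahori level.

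For (i), I would exploit the natural projection $\pi \colon \cF^f \to \cF$, which is a (right) torsor under the group $I/I^f$ -- here the normality of $I^f$ in $I$ is essential -- with $I/I^f$ of finite type over $k$ since it is built from the finitely many nontrivial subquotients of the form $U_a(L)_{x,f_I(a)}/U_a(L)_{x,f(a)}$. Letting $w \in \tilde{W}$ denote the image of $w_f$, the projection $\pi$ sends $X^f_{w_f}(b)$ into the classical affine Deligne-Lusztig variety $X_w(b)$, which is locally of finite type over $k$ by \cite{Ra}. Hence $\pi^{-1}(X_w(b))$ is locally of finite type, and $X^f_{w_f}(b)$ is a locally closed subscheme of this preimage, so (i) follows.

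For (ii), writing $b' = g^{-1}b\sigma(g)$, the central observation is the identity $(g^{-1}x)^{-1} b' \sigma(g^{-1}x) = x^{-1}b\sigma(x)$, which shows that left multiplication by $g^{-1}$ preserves the relative-position condition and therefore sends $X^f_{w_f}(b)$ bijectively onto $X^f_{w_f}(b')$, with inverse left multiplication by $g$. A parallel short computation verifies that conjugation by $g^{-1}$ carries $J_b(F)$ into $J_{b'}(F)$, and the equivariance of the pair is then immediate from $(g^{-1}hg)(g^{-1}xI^f) = g^{-1}(hxI^f)$. For (iii), the map is well defined on cosets since $i \in I$ normalizes $I^f$; the key computation is $(xi)^{-1} b \sigma(xi) = i^{-1}(x^{-1}b\sigma(x))\sigma(i)$, and one then rewrites $i^{-1}(I^f w_f I^f)\sigma(i) = I^f(i^{-1}w_f\sigma(i))I^f$ by absorbing the conjugated copies of $I^f$ using normality of $I^f$ in $I$ together with its $\sigma$-stability. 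The inverse is right multiplication by $i^{-1}$.

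No step is genuinely hard: (ii) and (iii) are bookkeeping, and (i) just rides on top of \cite{Ra}. If anything deserves attention, it is the combined use in (iii) of both hypotheses on $I^f$ -- normality in $I$ and $\sigma$-stability -- which must feed into the double-coset rearrangement at the same time; without either one the conjugate $i^{-1} I^f \sigma(i)$ would not collapse back to $I^f$.
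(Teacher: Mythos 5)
Your proposal is correct and takes essentially the same route as the paper, which dismisses (ii) and (iii) as trivial computations and proves (i) by exactly the observation you give: $X_w(b)$ is locally of finite type, $\cF^f \to \cF$ is an $I/I^f$-bundle, and $I/I^f$ is finite-dimensional. Your writeup just supplies the details the paper leaves implicit (the explicit identities $(g^{-1}x)^{-1}b'\sigma(g^{-1}x)=x^{-1}b\sigma(x)$ and $(xi)^{-1}b\sigma(xi)=i^{-1}(x^{-1}b\sigma(x))\sigma(i)$, and the double-coset rearrangement); the only small imprecision is that the rearrangement in (iii) really uses normality of $I^f$ together with $\sigma$-stability of $I$ (so that $\sigma(i)\in I$), whereas $\sigma$-stability of $I^f$ itself is what makes $\sigma$ act on $\cF^f$ and hence the statement well-posed.
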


\begin{proof}
(ii) and (iii) are trivial computations. (i): The affine Deligne-Lusztig varieties $X_w(b)$ are locally of finite type, $\cF^f \tar \cF$ is a $I/I^f$-bundle and $I/I^f$ is of finite dimension over $\bar{k}$.
\end{proof}

By Lemma \ref{lm:trivial_props} (iii), the $\sigma$-stabilizer 
\[ I_{f,w_f} = \{ i \in I \colon i^{-1} w_f \sigma(i) = w_f \} \]

\noindent of $w_f \in D_{G,f}(w)$ in $I$ acts on $X^f_{w_f}(b)$ by right multiplication, and this action factors through an action of $I_{f,w_f}/I^f$. Let $Z$ denote the center of $G$. Note that $Z(F) \subseteq J_b(F)$, and that $J_b(F)$-action restricted to $Z(F)$ can also be seen as a right action, thus extending the right $I_{f,w_f}$-action on $X^f_{w_f}(b)$ to a right $Z(F)I_{f,w_f}$-action. If $m \geq 0$, we also write $I_{m,w_m}$ instead of $I_{f_m,w_m}$.


\section{Computations for $\GL_2$}\label{sec:Computations_for_GL2}

\emph{From now on until the end of the paper we set $G = \GL_2$.} In this section, we compute the associated varieties $X^m_{w_m}(1)$ and their $\ell$-adic cohomology. 

\subsection{Some notations and preliminaries}\label{sec:generalities_GL_2}

We fix the diagonal torus $T$ and the upper triangular Borel subgroup $B$ of $G$. We set $K = G(\caO_F)$ and fix the Iwahori subgroup $I$ and its subgroups $I^m$ for $m \geq 0$:

\[ I^m = \matzz{1+\fp_L^{m+1}}{\fp_L^m}{\fp_L^{m+1}}{1+\fp_L^{m+1}} \subsetneq I = \matzz{\caO_L^{\ast}}{\caO_L}{\fp_L}{\caO_L^{\ast}} \subseteq G(\caO_L). \]


\noindent Note that the groups $I^m$ coincide with those defined in Section \ref{sec:AFM_and_covers} with respect to the valuation on the root datum, which corresponds to the vertex of the Bruhat-Tits building of $G$ associated with the maximal compact subgroup $G(\caO_L)$. The maximal torus $T$ is split over $F$ and hence the filtration on it do not depend on the choice of an admissible schematic filtration. It is given by $T(L)_r = \matzz{1 + \fp^r}{0}{0}{1 + \fp^r}$. Let $W_a \subseteq \tilde{W}$ be the affine and the extended affine Weyl group of $G$.

The variety $X_w(1)$ is empty, unless $w = 1$ or $w \in W_a$ with odd length (cf. e.g. \cite{Iv} Lemma 2.4). 
The case $w = 1$ is not very interesting: $X_1(1)$ is a disjoint union of points and the cohomology of coverings of $X_{1}(1)$ contains the principal series representations of $G(F)$, as for classical Deligne-Lusztig varieties and as in \cite{Iv} in case of level $0$. Thus we restrict attention to elements of odd length in $W_a$. To simplify some computations, we fix once for all time a positive integer $n = 2k > 0$ and the elements 

\begin{equation} \label{eq:dot_w_and_v}
\dot{w} = \matzz{0}{t^{-n}}{-t^n}{0}, \quad \dot{v} = \matzz{t^k}{}{}{t^{-k}} \in \N_T(L) \subseteq G(L)
\end{equation}

\noindent and denote by $w$ (resp. $v$) the image of $\dot{w}$ (resp. $\dot{v}$) in $W_a$ and by $w_m$ the images of $\dot{w}$ in $D_{G,m}(w)$ (the elements with $n < 0$ can be obtained by conjugation; the elements with $n$ odd lead to similar results). Let $pr_m \colon \cF^m \rar \cF$ be the natural projection. 
We have the following parametrizations of $C_v^m = pr_m^{-1}(C_v)$ and $D_{G,m}(w)$.
For $m \geq 0$, let $\R_m$ denote the Weil restriction functor $\Res_{(k[t]/t^{m+1})/k}$ from $k[t]/t^{m+1}$-schemes to $k$-schemes. $C_v$ is parametrized by $R_{2k - 1} \bG_a \rar C_v$, $a \mapsto \matzz{1}{a}{}{1} v$, where $a = \sum_{i=0}^{2k-1} a_i t^i$. Then for $m \geq 0$, $C_v^m$ is parametrized by

\begin{eqnarray}\label{eq:explicit_Cvm_param_GL_2}
\psi_v^m \colon R_{2k-1} \bG_a \times \R_m \bG_m^2 \times \R_m \bG_a^2 &\stackrel{\sim}{\longrar}& C_v^m = IvI/I^m \nonumber \\ 
a, C,D,A,B &\mapsto& \matzz{1}{a}{}{1} \dot{v} \matzz{C}{}{}{D} \matzz{1}{A}{}{1} \matzz{1}{}{tB}{1}  I^m 
\end{eqnarray}

\noindent We write $a = \sum_{i=0}^{n-1} a_it^i$, $A = \sum_{i=0}^m A_it^i$ and $C = c_0(1 + \sum_{i=1}^{m} c_it^i)$. Moreover, for $m \leq n$, $D_{G,m}(w)$ is parametrized by 

\begin{eqnarray}\label{eq:explicit_DNK_param_GL_2}
\phi_w^m \colon \R_m \bG_m^2(\bar{k}) \times \R_{m-1} \bG_a^2(\bar{k}) &\stackrel{\sim}{\longrar}& D_{G,m}(w) = I^m\backslash IwI/I^m \nonumber \\
(C,D),(E,B) &\mapsto& I^m \matzz{1}{}{tE}{1} \dot{w} \matzz{C}{}{}{D} \matzz{1}{}{tB}{1} I^m.
\end{eqnarray}

\noindent The proof that $\psi_v^m$ resp. $\phi_w^m$ is an isomorphism of varieties resp. sets amounts to a simple computation. We omit the details.

Finally, we remark the existence of the following determinant maps. Let $x \in W_a$. There is a natural $k$-morphism of $k$-varieties:

\[ \det\nolimits^m \colon C_x^m = IxI/I^m  \rar \R_m \bG_m, \quad yI^m \mapsto \det(y) \mod t^{m+1}. \]

\noindent In the same way we have the $k$-morphism 
\[ \det\nolimits^m \colon D_{G,m}(w) \rar \R_m \bG_m, \quad I^m y I^m \mapsto \det(y) \mod t^{m+1}. \]


\subsection{The structure of $X^m_{w_m}(1)$}\label{sec:the_structure_of_Xm_wm1}


\begin{lm} \label{lm:I/Im-quot-structure} Let $m \geq 0$. There is a natural isomorphism
\[ I_{m,w_m}/I^m \stackrel{\sim}{\longrar} \left\{ \matzz{C}{A}{0}{D} \in G(\bk[t]/t^{m+1}) \colon \sigma^2(C) = C, D = \sigma(C) \right\}. \] 
\end{lm}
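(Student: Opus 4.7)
The plan is a direct matrix computation. Write an arbitrary $i \in I$ in its unique Iwahori decomposition
\[ i = \matzz{1}{0}{tb'}{1}\matzz{c}{0}{0}{d'}\matzz{1}{a}{0}{1}, \]
parametrizing $I/I^m$ by the components $(b',c,d',a)$ modulo the relevant powers of $t$. I would then rewrite $i^{-1}\dot w\sigma(i)$ in Bruhat form using the commutation rules
\[
\dot w\matzz{x}{0}{0}{y}\dot w^{-1} = \matzz{y}{0}{0}{x}, \quad
\dot w\matzz{1}{z}{0}{1}\dot w^{-1} = \matzz{1}{0}{-t^{2n}z}{1}, \quad
\dot w\matzz{1}{0}{ty}{1}\dot w^{-1} = \matzz{1}{-t^{1-2n}y}{0}{1},
\]
and match the resulting expression with the canonical Bruhat representative $\matzz{1}{0}{tE}{1}\dot w\matzz{C}{0}{0}{D}\matzz{1}{0}{tB}{1}$ of \eqref{eq:explicit_DNK_param_GL_2}.

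Once the coordinates $(C,D,E,B)$ are expressed as rational expressions in $(c,d',a,b')$ and their $\sigma$-twists, the defining condition $i \in I_{m,w_m}$ becomes the system $(C,D,E,B) \equiv (1,1,0,0)$ modulo the appropriate powers of $t$. Under the standing assumption $m \le n$, terms involving $t^{2n-1}$ or higher are automatically absorbed into $I^m$ on the relevant side of $\dot w$, and the computation simplifies considerably. The conditions $E \equiv 0 \equiv B \pmod{\fp_L^m}$ then force $b' \equiv 0 \pmod{\fp_L^m}$; that is, the lower-unipotent part of $i$ vanishes modulo $I^m$. With this in hand one has $\det i \equiv cd' \pmod{\fp_L^{m+1}}$, and the remaining conditions $C \equiv 1 \equiv D$ reduce to $d' \equiv \sigma(c)$ and $\sigma(d') \equiv c \pmod{\fp_L^{m+1}}$, i.e., $\sigma^2(c) \equiv c$ and $d' \equiv \sigma(c)$.

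I would then define the isomorphism by sending the class of $i = \matzz{c}{a}{0}{\sigma(c)} \mod I^m$ (which, by the previous step, is a representative of every element of $I_{m,w_m}/I^m$) to the matrix $\matzz{c}{a}{0}{\sigma(c)} \in G(\bar k[t]/t^{m+1})$. A direct verification shows this is a well-defined group homomorphism, since the product of two such upper-triangular matrices is $(C_1C_2,\; C_1A_2 + A_1\sigma(C_2))$, which matches the group law on the stabilizer inherited from $I/I^m$; that it is injective follows by construction, and surjectivity onto the claimed target set is visible from the parametrization.

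The main obstacle will be the matrix arithmetic in the first two steps: tracking the precise valuations of each entry of $i^{-1}\dot w\sigma(i)$ through the successive conjugations by $\dot w$ is delicate, because the ``twist'' factors $t^{\pm(2n-1)}$ can push entries into negative valuations before cancelling. This has to be done carefully enough to cleanly separate the conditions on $E,B$ that force $b'$ to vanish from the independent conditions on $C,D$ that constrain the torus part to the twisted form $(c,\sigma(c))$ with $\sigma^2(c) = c$.
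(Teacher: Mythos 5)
Your strategy---decompose $i \in I$ via the Iwahori factorization, push $i^{-1}\dot w\sigma(i)$ into the normal form of \eqref{eq:explicit_DNK_param_GL_2}, and read off the constraints---is precisely the ``easy computation using \eqref{eq:explicit_DNK_param_GL_2}'' that the paper has in mind, and the conditions you extract are right: the $(E,B)$-coordinates both force $b' \equiv 0 \pmod{\fp_L^m}$, whereupon the $(C,D)$-coordinates give $d' \equiv \sigma(c)$ and $\sigma^2(c)\equiv c\pmod{\fp_L^{m+1}}$, with $a$ unconstrained. The commutation step is also less delicate than you fear: the upper-unipotent factors of $i^{-1}$ and $\sigma(i)$ move across $\dot w$ into $\matzz{1}{0}{t^{2n}(\cdot)}{1}$, which already lies in $I^m$ (as $2n\geq m+1$), and the lower-unipotent factors never need to cross $\dot w$ at all, so the apparent negative-valuation entries never survive.

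However, the ``direct verification of well-definedness'' you defer to would in fact surface a real problem. Since $I^m$ contains the whole root subgroup $\matzz{1}{\fp_L^m}{0}{1}$, the class $iI^m$ determines the $(1,2)$-entry of its upper-triangular representative only modulo $\fp_L^m$, not $\fp_L^{m+1}$: replacing $i$ by $i\matzz{1}{t^m\lambda}{0}{1}$ changes $a$ by $t^m\lambda$ without changing the class. So the assignment $iI^m\mapsto\matzz{c}{a}{0}{\sigma(c)}$, with $a$ read in $\bk[t]/t^{m+1}$, is not well-defined. The natural target has its $A$-entry taken modulo $t^m$, which is what the rest of the paper uses implicitly: the $(E,B)$-coordinates of $\phi_w^m$ lie in $\R_{m-1}\bG_a$, not $\R_m\bG_a$, and the kernel of $I_{m,w_m}/I^m\tar T_{w,m}$ acts by translation on exactly the $m$ coordinates $A_0,\dots,A_{m-1}$ in the proof of Lemma \ref{lm:IIm-action_factors_over_a_Twm_action}. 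Once you read $A$ modulo $t^m$, your map is well-defined and your group-law verification goes through unchanged.
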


\begin{proof}
An easy computation using \eqref{eq:explicit_DNK_param_GL_2} shows the lemma.
\end{proof}

For $r > m$, let $\tau_m \colon \bk[t]/t^r \tar \bk[t]/t^{m+1}$ denote the reduction modulo $t^{m+1}$. Using coordinates from \eqref{eq:explicit_Cvm_param_GL_2}, let $S = \tau_m(\sigma(a) - a)$ and let $Y_v^m \subseteq C_v^m$ be the locally closed subset defined by 

\begin{eqnarray} \label{eq:Def_of_Y_m}
a_0 &\not\in& k \nonumber \\
B &=& 0  \\
\sigma(C) D^{-1} S^{-1} &=& 1 \nonumber \\ 
\sigma(D) C^{-1} S &=& 1 \nonumber 
\end{eqnarray}

\noindent Let $D_v \subseteq C_v$ be the open subset defined by the condition $a_0 \not\in k$. The composition $Y_v^m \rar C_v^m \rar C_v$ factors through $Y_v^m \rar D_v$. The natural $K$-action on $C_v^m$ by left multiplication restricts to an action on $Y_v^m$ (this will follow implicitly from the proof of Theorem \ref{thm:structure_of_level_m_coverings_b=1}). Moreover, Lemma \ref{lm:I/Im-quot-structure} implies that the natural right $I/I^m$-action on $C_v^m$ restricts to a right action of $I_{m,w_m}/I^m$ on $Y_v^m$.

\begin{thm}\label{thm:structure_of_level_m_coverings_b=1} 
Let $0 \leq m < n$. Let $w_m^{\prime} = \phi_w^m(C,D,E,B) \in D_{G,m}(w)$. Then $X_{w_m^{\prime}}^m(1)$ is non-empty if and only if one has $B = -\sigma(E)$. If this holds true, then $w_m^{\prime}$ is $I$-$\sigma$-conjugate to  $w_m = \phi_w^m(1,1,0,0)$ in $D_{G,m}(w)$ (that is $w_m^{\prime} = i^{-1} w_m \sigma(i)$ for some $i \in I$). In particular, $X_{w_m^{\prime}}^m(1) \cong X_{w_m}^m(1)$, compatible with appropriate group actions. Further, there is an isomorphism equivariant for the left $G(F)$- and right $(I/I^m)_{w_m}$-actions:

\[ X^m_{w_m}(1) \cong \coprod_{G(F)/K} Y_v^m. \]
\end{thm}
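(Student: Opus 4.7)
The theorem has two largely independent assertions: (A) the criterion $B=-\sigma(E)$ (in $\phi_w^m$-coordinates) for $X_{w_m'}^m(1)$ to be non-empty, together with the identification of $w_m$'s $I$-$\sigma$-conjugacy class in the fiber $D_{G,m}(w)$; and (B) the explicit decomposition $X_{w_m}^m(1)\cong\coprod_{G(F)/K} Y_v^m$. Once $w_m'$ is $I$-$\sigma$-conjugate to $w_m$, the isomorphism $X_{w_m'}^m(1)\cong X_{w_m}^m(1)$ is immediate from Lemma \ref{lm:trivial_props}(iii).

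For (A), the plan is to identify the $I$-$\sigma$-orbit of $w_m=\phi_w^m(1,1,0,0)$ in $D_{G,m}(w)$ with the locus $\{B=-\sigma(E)\}$. For the inclusion $\supseteq$, I compute the $\phi_w^m$-coordinates of $i^{-1}\dot w\sigma(i)$ for three families: using $m<n$, a direct matrix calculation shows that $i=\matzz{1}{0}{t\epsilon}{1}$ gives $\phi_w^m(1,1,-\epsilon,\sigma(\epsilon))$, that $i=\diag(\alpha,\delta)$ gives $\phi_w^m(\delta^{-1}\sigma(\alpha),\alpha^{-1}\sigma(\delta),0,0)$, and that upper-unipotent $i$ acts trivially modulo $I^m$. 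Combining these, and using surjectivity of the Lang map $\caO_L^\ast\to\caO_L^\ast$, $x\mapsto\sigma(x)/x$, together with solvability of $\alpha\sigma(\alpha)=$ prescribed unit, I reach every $(C,D,E,B)$ with $B=-\sigma(E)$. For the opposite inclusion, I compute $g^{-1}\sigma(g)=u^{-1}\matzz{1}{(\sigma(a)-a)t^{-n}}{0}{1}\sigma(u)$ for $g=\matzz{1}{a}{0}{1}\dot v\cdot u$ parametrized via $\psi_v^m$, factor it into the $\phi_w^m$ normal form, and verify that the resulting $\phi_w^m$-coordinates always satisfy $B=-\sigma(E)$; hence $X_{w_m'}^m(1)=\emptyset$ when $B\neq-\sigma(E)$.

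For (B), I lift the Iwahori-level decomposition $X_w(1)\cong\coprod_{G(F)/K}D_v$ to level $m$. Every $g\in G(L)$ representing a point of $X_w(1)$ admits a representative $g=g_F\cdot\matzz{1}{a}{0}{1}\dot v\cdot u$ with $g_F\in G(F)$, $a_0\notin k$, and $u$ a lift to $C_v^m$; since $g_F$ is $\sigma$-fixed, the condition $g^{-1}\sigma(g)\in I^m\dot wI^m$ depends only on $(a,u)$. By the same factorization as in (A), this condition translates exactly into the $\psi_v^m$-coordinate equations of $Y_v^m$: $a_0\notin k$, $B=0$, $\sigma(C)=DS$, $\sigma(D)=CS^{-1}$ with $S=\tau_m(\sigma(a)-a)$. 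Verifying that the $G(F)$-stabilizer of $Y_v^m\subseteq\cF^m$ equals $K$ and that distinct cosets in $G(F)/K$ yield disjoint translates completes the decomposition; the equivariance for the $G(F)$- and $(I/I^m)_{w_m}$-actions is built into the construction.

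The principal obstacle is the LU-type matrix factorization converting $u^{-1}\matzz{1}{(\sigma(a)-a)t^{-n}}{0}{1}\sigma(u)$ into the $\phi_w^m$ normal form modulo $I^m$, which requires tracking precisions $\fp_L^{m+1}$ on the $(C,D)$-coordinates and $\fp_L^m$ on the $(E,B)$-coordinates while handling the upper- and lower-unipotent factors of $u$ simultaneously. A secondary subtlety is that the equation $B=0$ in the $\psi_v^m$-coordinates appearing in the definition of $Y_v^m$ is forced by $g^{-1}\sigma(g)\in I^m\dot wI^m$ rather than imposed ad hoc; tracking why no nonzero $\psi_v^m$-$B$ can land in the specific class $w_m=\phi_w^m(1,1,0,0)$, as opposed to another class in its $I$-$\sigma$-orbit, requires careful analysis given the precision asymmetry above.
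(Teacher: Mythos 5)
Your proposal follows essentially the same route as the paper: both reduce to the known Iwahori-level decomposition $X_w(1)\cong\coprod_{G(F)/K}gD_v$ from [Iv], lift along the projection $\cF^m\to\cF$, and hinge on the same explicit computation $\inv^m(\dot xI^m,\sigma(\dot x)I^m)=\phi_w^m(\sigma(C)D^{-1}S^{-1},\sigma(D)C^{-1}S,-B,\sigma(B))$ over $D_v$ (the paper's Key Computation, Lemma~\ref{lm:keycomputation}), which you correctly identify as the principal technical step. The only divergence is cosmetic: you assemble the $I$-$\sigma$-conjugation of $w_m'$ to $w_m$ from one-parameter families (lower unipotent, diagonal via Lang, and a check that upper unipotents act trivially because conjugation through $\dot w$ sends them into $I^m$ since $2n>m$), whereas the paper composes just two explicit conjugations (first a lower-unipotent step killing $E,B$ using $B=-\sigma(E)$, then a diagonal step via Lang's theorem normalizing $C,D$).
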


\begin{proof}
In \cite{Iv} it was shown that $X_w(1) = \coprod_{g \in G(F)/K} gD_v$ is the decomposition of $X_w(1)$ in connected components. As the natural projection $\cF^m \rar \cF$ restricts to a map \\ 
$\pr_m \colon X_{w_m^{\prime}}^m(1) \rar X_w(1)$, we have 

\[ X_{w_m^{\prime}}^m(1) \cong \coprod_{G(F)/K} \pr_m^{-1}(gD_v) = \coprod_{G(F)/K} g \pr_m^{-1}(D_v). \]

\noindent Thus it is enough to determine $pr_m^{-1}(D_v)$. One sees from Lemma \ref{lm:keycomputation}, that if $w_m^{\prime} = \phi_w^m(C,D,E,B)$ do not satisfy $B = -\sigma(E)$, then $pr_m^{-1}(D_v) = \emptyset$. On the other hand, if $w_m^{\prime}$ satisfies this, then $\sigma$-conjugating $w_m^{\prime}$ first by $\matzz{1}{}{B}{1} \in I$ and then by a diagonal $i = \matzz{i_1}{}{}{i_2} \in I$ such that $i_1^{-1}C\sigma(D)\sigma^2(i_1) = 1$ (such $i_1$ exists by Lang's theorem) and $i_2 = C\sigma(i_1)$, we deduce that $w_m^{\prime}$ is $I$-$\sigma$-conjugate to $w_m$. Thus by Lemma \ref{lm:trivial_props}(iii) we may assume $w_m^{\prime} = w_
m$. In this case Lemma \ref{lm:keycomputation} shows $pr_m^{-1}(D_v) = Y_v^m$, which finishes the proof.
\end{proof}

\begin{lm}[Key computation] \label{lm:keycomputation}
Let $0 \leq m < n$. Let $\dot{x}I^m = \psi_v^m(a, C, D, A, B) \in C_v^m$ such that $a_0 \not\in k$. Write $S = \tau_m(\sigma(a) - a)$. Then 

\[ \inv^m(\dot{x}I^m, \sigma(\dot{x})I^m) = \phi_w^m(\sigma(C) D^{-1} S^{-1}, \sigma(D) C^{-1} S, -B, \sigma(B)). \]
\end{lm}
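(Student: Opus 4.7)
Write $u(X)=\matzz{1}{X}{0}{1}$, $u^-(X)=\matzz{1}{0}{X}{1}$, $d(\alpha,\beta)=\matzz{\alpha}{0}{0}{\beta}$. The plan is to compute $\dot x^{-1}\sigma(\dot x)$ directly from the explicit product $\dot x=u(a)\,\dot v\,d(C,D)\,u(A)\,u^-(tB)$ supplied by $\psi_v^m$, and then massage the result into the normal form prescribed by $\phi_w^m$. Since $\dot v\in G(F)$ we have $\sigma(\dot v)=\dot v$, and one checks directly the conjugation identity $\dot v^{-1}u(X)\dot v=u(t^{-n}X)$. Combining this with the commutations $d(\alpha,\beta)u(X)=u(\alpha X\beta^{-1})d(\alpha,\beta)$ and $u(X_1)u(X_2)=u(X_1+X_2)$, a routine bookkeeping reduction produces
\[
\dot x^{-1}\sigma(\dot x) \;=\; u^-(-tB)\cdot u(Z')\cdot d\bigl(C^{-1}\sigma(C),D^{-1}\sigma(D)\bigr)\cdot u^-(t\sigma(B)),
\]
with $Z'=t^{-n}\tilde Y$ and $\tilde Y=C^{-1}D(\sigma(a)-a)-t^nA+t^nC^{-1}\sigma(C)\sigma(A)D\sigma(D)^{-1}$. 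The outer factors already match the desired $E=-B$ and $B'=\sigma(B)$, so the remaining task is to show that $u(Z')\,d(C^{-1}\sigma(C),D^{-1}\sigma(D))$ represents the expected element of $D_{G,m}(w)$ modulo $I^m$ on both sides.

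The hypothesis $a_0\notin k$ ensures $\sigma(a)-a$ has unit constant term, and $m<n$ forces $t^nA\equiv t^nC^{-1}\sigma(C)\sigma(A)D\sigma(D)^{-1}\equiv 0\pmod{t^{m+1}}$, so $\tilde Y\equiv C^{-1}DS\pmod{t^{m+1}}$; in particular $\tilde Y$ is a unit in $\bar k[[t]]$. The key algebraic input is the rank-one Bruhat identity
\[
u(t^{-n}\tilde Y) \;=\; u^-(t^n\tilde Y^{-1})\,\dot w\,d(\tilde Y^{-1},\tilde Y)\,u^-(t^n\tilde Y^{-1}),
\]
which is an elementary $2\times2$ check using the explicit form of $\dot w=\matzz{0}{t^{-n}}{-t^n}{0}$. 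Because $m<n$, both outer factors $u^-(t^n\tilde Y^{-1})$ lie in $I^m$. Substituting this into $u(Z')$, pushing the surviving $u^-(t^n\tilde Y^{-1})$ past $d(C^{-1}\sigma(C),D^{-1}\sigma(D))$ via $u^-(X)d(\alpha,\beta)=d(\alpha,\beta)u^-(\beta^{-1}X\alpha)$ (the new lower-unipotent is again in $I^m$ since $C,D$ are units and $n>m$), and collapsing the two adjacent diagonals gives
\[
u(Z')\,d(C^{-1}\sigma(C),D^{-1}\sigma(D)) \;\in\; I^m\,\dot w\,d\bigl(\tilde Y^{-1}C^{-1}\sigma(C),\,\tilde Y D^{-1}\sigma(D)\bigr)\,I^m.
\]
Reducing these two diagonal entries modulo $t^{m+1}$ with $\tilde Y\equiv C^{-1}DS$ yields exactly $(\sigma(C)D^{-1}S^{-1},\sigma(D)C^{-1}S)$.

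To finish, the flanking factors $u^-(-tB)$ and $u^-(t\sigma(B))$ must be transported across the $I^m$'s so that everything lies inside a single $\phi_w^m$-coset. For this I invoke the normality of $I^m$ in $I$ established at the end of Section~\ref{sec:AFM_and_covers}: since $u^-(-tB),u^-(t\sigma(B))\in I$, one has $u^-(-tB)\,I^m=I^m\,u^-(-tB)$ and $I^m\,u^-(t\sigma(B))=u^-(t\sigma(B))\,I^m$, whence
\[
\dot x^{-1}\sigma(\dot x)\;\in\;I^m\,u^-(-tB)\,\dot w\,d(\sigma(C)D^{-1}S^{-1},\sigma(D)C^{-1}S)\,u^-(t\sigma(B))\,I^m,
\]
which is precisely $\phi_w^m(\sigma(C)D^{-1}S^{-1},\sigma(D)C^{-1}S,-B,\sigma(B))$. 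I expect the main obstacle to be purely organizational: keeping careful track of $t$-adic valuations and of Laurent-series representatives modulo $t^{m+1}$ through the cascade of matrix identities, and verifying at each step that the ``error'' unipotent factors one introduces actually land in $I^m$ and not merely in the larger Iwahori $I$. Once the rank-one Bruhat identity is in hand and $I^m\triangleleft I$ is available, the rest is mechanical.
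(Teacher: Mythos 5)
Your proof is correct, and it follows essentially the same strategy as the paper's: both reduce the computation of $\dot x^{-1}\sigma(\dot x)$ to a rank-one Bruhat decomposition of the central unipotent block $u(t^{-n}\cdot)$, observe that the two $u^-$-factors this produces land in $I^m$ because $m<n$, and then use normality of $I^m$ in $I$ to shuffle the $B$-unipotents through. The only (minor) organizational difference is that you first commute all the $A$-terms past the diagonals to bundle the full contribution into a single $u(Z')$ with $Z'=t^{-n}\tilde Y$ before applying the Bruhat identity (after which $t^n A$ dies modulo $t^{m+1}$), whereas the paper applies the Bruhat step to $u(t^{-n}(\sigma(a)-a))$ directly and then pulls each $u(\pm A)$-term past $\dot w$ separately, where it becomes a $t^{2n}$-order lower unipotent absorbed by $I^m$; both devices dispose of the $A$-terms by the same $m<n$ estimate.
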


\begin{proof} Let
\[ \dot{x} = \matzz{t^k}{t^{-k}a}{}{t^{-k}} \matzz{C}{}{}{D} \matzz{1}{A}{}{1} \matzz{1}{}{tB}{1} \in G(L).\] 

\noindent We have to compute the $(I^m,I^m)$-double coset of $\dot{x}^{-1}\sigma(\dot{x})$. By assumption $S$ is a unit and one computes (using $m<n$)

\[ \matzz{t^k}{t^{-k}a }{}{t^{-k}}^{-1} \sigma\matzz{t^k}{t^{-k}a }{}{t^{-k}} = \matzz{1}{t^{-2k}(\sigma(a) - a)}{}{1} \in I^m \matzz{S}{}{}{S^{-1}} \dot{w} I^m, \]


\noindent in $G(L)$. Thus by normality of $I^m$ in $I$, we obtain:

\[ 
\begin{split}
\dot{x}^{-1}\sigma(\dot{x}) \in I^m &\matzz{1}{}{-tB}{1} \matzz{1}{-A}{}{1} \matzz{C^{-1}}{}{}{D^{-1}} \cdot \matzz{S}{}{}{S^{-1}} \dot{w} \dots \\ 
\dots &\matzz{\sigma(C)}{}{}{\sigma(D)} \matzz{1}{\sigma(A)}{}{1} \matzz{1}{}{t\sigma(B)}{1} I^m.
\end{split}
\]

\noindent Then we can pull the term containing $-A$ to the right side of $\dot{w}$, without changing the other terms. The corresponding term, which then appear on the right side of $\dot{w}$ will lie in $I^m$, i.e., we can cancel it by normality of $I^m$ in $I$. The same can be done then with the term containing $\sigma(A)$, by pulling it to the left side of $\dot{w}$ and cancelling it.  Computing the remaining matrices together, we obtain:

\[ \dot{x}^{-1}\sigma(\dot{x}) \in I^m \matzz{1}{}{-tB}{1} \dot{w} \matzz{\sigma(C)D^{-1}S^{-1}}{}{}{C^{-1}\sigma(D)S} \matzz{1}{}{t\sigma(B)}{1} I^m. \]

\noindent This finishes the proof.
\end{proof}


\subsection{The structure of $Y_v^m$} \label{sec:structure_of_Yvm}

We keep notations from Sections \ref{sec:generalities_GL_2} and \ref{sec:the_structure_of_Xm_wm1}. Let $k_2/k$ denote the subextension of $\bar{k}/k$ of degree two. There is a natural surjection 

\[ I_{m,w_m} \tar T_{w,m} = \left\{ \matzz{C}{}{}{\sigma(C)} \colon C \in (k_2[t]/t^{m+1})^{\ast} \right\}. \]

\noindent Let $T_{w,m,0} = T_{w,m} \cap \SL_2(k_2[t]/t^{m+1})$ be the subgroup defined by the condition $C^{-1} = \sigma(C)$. Let $f \colon \bar{k} \rar \bar{k}$ denote the map $f(x) = x^q - x$. For $X \in \bk[t]/t^r$  we write $X = \sum_{i=0}^{r-1} X_i t^i$. We denote the affine space (over $\bk$) spanned by coordinates $X_0, \dots, X_{r-1}$ by $\bA^r(X_0, \dots, X_{r-1})$ resp. by $\bA^r(X)$.

\begin{prop}\label{prop:structure_of_Yvm_bla} Let $0 \leq m \leq n$. \mbox{}
\begin{itemize}
 \item[(i)] The variety $Y_v^m$ is isomorphic to the finite covering of $D_v \times \bA^m(A)$ given by 
 
\begin{equation}\label{eq:Yexplicit} 
\sigma^2(C)C^{-1} = \sigma(S)S^{-1}. 
\end{equation}

\noindent in $\bar{k}[t]/t^{m+1}$. It is a finite \'etale Galois covering with Galois group $T_{w,m}$.

\item[(ii)] The (set-theoretic) image of $\det^m \colon Y_v^m \rar \R_m \bG_m$ is the disjoint union of the $k$-rational points, which is as a set equal to $(k[t]/t^{m+1})^{\ast}$. Moreover, $\pi_0(Y_v^m) = (k[t]/t^{m+1})^{\ast}$. The map $\pi_0(Y_v^m) \tar \pi_0(Y_v^{m-1})$ induced by the projection corresponds to the reduction modulo $t^m$ map.

\item[(iii)] Let $Y_{v,0}^m$ be the connected component of $Y_v^m$ corresponding to $1 \in (k[t]/t^{m+1})^{\ast}$. Then $Y_{v,0}^m$ is (isomorphic to) a finite covering of $D_v \times \bA^m$ given by

\begin{equation}\label{eq:easy_eq_for_Yv0m} 
C\sigma(C) = S. 
\end{equation}

\noindent It is a connected finite \'etale Galois covering with Galois group $T_{w,m,0}$. Moreover, $Y_{v,0}^m \tar Y_{v,0}^{m-1} \times \bA^1(A_{m-1})$ is given by 

\begin{equation} \label{eq:defining_Ym_over_Ym-1}
c_m^q + c_m = \frac{f(a_m)}{f(a_0)} - \sum_{i=1}^{m-1} c_i^q c_{m-i}.
\end{equation}
\end{itemize}
\end{prop}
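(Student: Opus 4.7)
I handle the parts in the order (i), (iii), (ii), since (iii) needs only a direct computation of $\det^m$ while the $\pi_0$-statement of (ii) depends on the connectedness proved in (iii).

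For \emph{(i)}: since $a_0 \notin k$, $s_0 = f(a_0) \neq 0$, so $S$ is a unit in $\bar{k}[t]/t^{m+1}$. The equation $\sigma(C)D^{-1}S^{-1} = 1$ from \eqref{eq:Def_of_Y_m} determines $D = \sigma(C)S^{-1}$; substituting into $\sigma(D)C^{-1}S = 1$ gives the single relation $\sigma^2(C)C^{-1} = \sigma(S)S^{-1}$, namely \eqref{eq:Yexplicit}. Together with $B=0$, this identifies $Y_v^m$ as the closed subscheme of $D_v \times \R_m\bG_a \times \R_m\bG_m$ cut out by \eqref{eq:Yexplicit}. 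The map $C \mapsto \sigma^2(C)C^{-1}$ is the Lang map on the connected smooth group $\R_m\bG_m$ for the endomorphism $\sigma^2$, so by Lang's theorem it is finite \'etale surjective with kernel $(k_2[t]/t^{m+1})^* \cong T_{w,m}$. Hence $Y_v^m \to D_v \times \R_m\bG_a$ is finite \'etale Galois with group $T_{w,m}$ acting on $C$ by multiplication; this action matches the right $I_{m,w_m}/I^m$-action via Lemma \ref{lm:I/Im-quot-structure}.

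For \emph{(iii)}: direct computation via $\psi_v^m$ gives $\det^m = CD \bmod t^{m+1}$; on $Y_v^m$ this is $C\sigma(C)S^{-1}$, so $Y_{v,0}^m = (\det^m)^{-1}(1)$ is cut out inside $Y_v^m$ by $C\sigma(C) = S$, which implies \eqref{eq:Yexplicit}. The map $C \mapsto C\sigma(C)$ on $\R_m\bG_m$ is a homomorphism with kernel $T_{w,m,0}$ and is finite \'etale surjective (differential $\sigma(C)\,dC$ is a unit; surjectivity on $\bk$-points is Lang for the connected commutative group $\R_m\bG_m$), so $Y_{v,0}^m \to D_v \times \R_m\bG_a$ is finite \'etale Galois with group $T_{w,m,0}$. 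Expanding $C\sigma(C) = S$ coefficient-by-coefficient yields $c_0^{q+1} = f(a_0)$ at level zero and \eqref{eq:defining_Ym_over_Ym-1} at level $m$. Connectedness is proved by induction on $m$: the base $m=0$ is a Kummer cover $c_0^{q+1} = a_0^q - a_0$, irreducible because $a_0^q - a_0 = \prod_{\alpha\in\bF_q}(a_0 - \alpha)$ is squarefree and hence not a nontrivial $d$-th power in $\bar{k}(a_0)^*$ for any $d \mid q+1$; for $m \geq 1$ the Artin--Schreier-type transition $c_m^q + c_m = h$ over $Y_{v,0}^{m-1}$ is irreducible provided $h \notin \wp(K)$ with $\wp(y) = y^q + y$ and $K$ the function field, which follows from a valuation argument at the places above $a_0 = \alpha$ ($\alpha \in \bF_q$): the leading-in-$a_m$ term of $h$ forces any would-be $\wp$-preimage $y$ to be linear in $a_m$ with top coefficient $y_{-1}$ satisfying $y_{-1}^q = 1/f(a_0)$, but $f(a_0) = c_0^{q+1}$ has valuation $q+1$ at such a place, so $y_{-1}$ would have the non-integer valuation $-(q+1)/q$, a contradiction.

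For \emph{(ii)}: $\sigma$-invariance of $C\sigma(C)S^{-1}$ (verified using \eqref{eq:Yexplicit}) shows $\det^m$ lands in $(k[t]/t^{m+1})^* = (\R_m\bG_m)^\sigma(\bk)$; the image is the whole set by surjectivity of the norm $(k_2[t]/t^{m+1})^* \twoheadrightarrow (k[t]/t^{m+1})^*$ for the unramified cyclic extension $k_2/k$ (Hilbert~90 applied along the unit filtration). On the Galois side, $\det^m$ corresponds to the quotient $T_{w,m} \twoheadrightarrow T_{w,m}/T_{w,m,0} \cong (k[t]/t^{m+1})^*$, so each fiber is a $T_{w,m}$-translate of $Y_{v,0}^m$ and hence connected by (iii); consequently $\pi_0(Y_v^m) = (k[t]/t^{m+1})^*$, and the transition $\pi_0(Y_v^m) \to \pi_0(Y_v^{m-1})$ is reduction modulo $t^m$ since $\det^m$ lifts $\det^{m-1}$. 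The main obstacle is the inductive connectedness step in (iii): proving geometric irreducibility of \eqref{eq:defining_Ym_over_Ym-1} rests on the valuation-theoretic argument above, which crucially exploits the degree-$(q+1)$ Kummer ramification inherited from level $0$.
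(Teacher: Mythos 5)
Your part (i) is correct but takes a cleaner, genuinely different route than the paper. After eliminating $D$, the paper expands \eqref{eq:Yexplicit} coefficient by coefficient, shows each transition $Y_v^m\to Y_v^{m-1}\times\bA^1$ is finite \'etale of degree $q^2$ by a Lang-type polynomial $c_m^{q^2}-c_m=p(\dots)$, and then deduces the Galois group by counting degrees against the visible $T_{w,m}$-action. You instead recognize \eqref{eq:Yexplicit} as the pullback of the Lang isogeny $C\mapsto\sigma^2(C)C^{-1}$ on the connected commutative group $\R_m\bG_m$ along the map $\sigma(S)S^{-1}$, which immediately gives a finite \'etale Galois cover with group $\ker=(\R_m\bG_m)(\bF_{q^2})\cong T_{w,m}$. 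This is a real simplification and removes the induction. The computation of $\det^m=CD$, the $\sigma$-invariance of $CD$, the surjectivity of $\bar N$, and the $\pi_0$-transition argument in (ii) also check out and match the paper modulo presentation.

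The genuine gap is in your proof of connectedness (and hence of the irreducibility needed in (ii),(iii)). The paper does \emph{not} prove this by Artin--Schreier irreducibility of \eqref{eq:defining_Ym_over_Ym-1}; it instead performs a coordinate change $a_m\mapsto f(a_0)^{-1/q}a_m - c_m$ (Lemma~\ref{lm:coh_of_YsmZ}(i)) that converts each transition $Y_{v,0}^{m,\prime}\setminus Z^m\to Y_{v,0}^{m-1,\prime}\setminus Z^{m-1}$ into an $\bA^1$-bundle, and then uses connectedness of the curve $C_-\setminus N_-$ at level zero. Your replacement argument is fine for $m=0$ and $m=1$, but the valuation step breaks for $m\geq 2$: at a place of the function field of $Y_{v,0}^{m-1}$ lying over $a_0=\alpha$, the intermediate transitions $c_j^q+c_j=\dots$ are themselves wildly ramified over that place (the right-hand side has a pole of order $q+1$ prime to $p$, forcing ramification index $q$), so in the normalized valuation of the actual function field one has $v(f(a_0))=q^{\,j}(q+1)$ for $j\geq 1$ rather than $q+1$. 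Then $y_{-1}^q=1/f(a_0)$ gives $v(y_{-1})=-q^{\,j-1}(q+1)$, an integer, and no contradiction. The conclusion is still true, and the fix is easy if one uses \emph{both} coefficients of $a_m^q$ and of $a_m$: matching $y^q+y$ with $h$ in $K'(a_m)$ forces $y_{-1}^q=1/f(a_0)$ and $y_{-1}=-1/f(a_0)$ simultaneously, hence $f(a_0)^{q-1}=(-1)^q$, which is impossible because $f(a_0)=a_0^q-a_0$ is a nonconstant function. This two-coefficient comparison avoids the ramification bookkeeping entirely; as written, however, your single-coefficient valuation argument is incomplete for $m\geq 2$.
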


\begin{proof} Note that for a point $\psi_v^m(a,C,D,A,0) \in Y_v^m$, $S = \tau_m(a), C, D$ are units in $\bar{k}[t]/t^{m+1}$. From the last two equations in \eqref{eq:Def_of_Y_m}, we see that on $Y_v^m$, $D = \sigma(C)S^{-1}$ is uniquely determined by $C$ and $S$ and that $Y_v^m$ is indeed given by the equation \eqref{eq:Yexplicit}. Let us from now on proceed by induction on $m$. We see that $Y_v^0 \rar D_v$ is defined by $c_0^{q^2 - 1} = f(a_0)^{q-1}$, i.e., it is finite etale with Galois group isomorphic to $k_2^{\ast}$.  Clearly, $Y_v^m$ lies over $Y_v^{m-1} \times \bA^1(A_{m-1})$. Bring equation \eqref{eq:Yexplicit} to the form $\sigma^2(C)S = C\sigma(S)$. Expanding this expression with respect to $C = c_0(1 + \sum_{i=1}^m c_i t^i)$, $S = \sum_i f(a_i)t^i$, shows that $Y_v^m \rar Y_v^{m-1} \times \bA^1(A_{m-1})$ is defined by an equation of the form

\[c_m^{q^2} - c_m = p(a_0,\dots,a_m,c_0,\dots,c_{m-1}) \]


\noindent with $p$ some regular function on $Y_v^{m-1} \times \bA^1(A_{m-1})$. This is clearly a finite \'etale covering. Moreover, it is Galois and the Galois group is isomorphic to $k_2$, where $\lambda \in k_2$ acts by $c_m \mapsto c_m + \lambda$. By induction, $Y_v^m \rar D_v \times \bA^m(A)$ is also finite \'etale and has degree $(q^2 - 1)q^{2m}$. Equation \eqref{eq:Yexplicit} shows that the automorphism group of this covering contains $T_{w,m}$. Comparing the degrees we see that $Y_v^m \rar D_v \times \bA^m$ is Galois with Galois group $T_{w,m}$. This shows part (i). Let $\dot{x}I^m = \psi_v^m(a,C,D,A,0) \in Y_v^m$. As $Y_v^m \subseteq X_{w_m}^m(1)$, we obtain 
\[1 = \det\nolimits^m(\phi_w^m(1,1,0,0)) = \det\nolimits^m(I^m\dot{x}^{-1}\sigma(\dot{x})I^m) = (CD)^{-1}\sigma(CD). \]

\noindent It follows that $\det^m(\dot{x}I^m) = CD \in (k[t]/t^{m+1})^{\ast}$. This shows the claim about the image of $\det^m$. In particular, we obtain a map $\pi_0(Y_v^m) \rar (k[t]/t^{m+1})^{\ast}$. Its surjectivity follows using the action of $T_{w,m}$ on $Y_v^m$ and the fact that $\det \colon T_{w,m} \rar (k[t]/t^{m+1})^{\ast}$ is surjective. Let $Y_{v,0}^m$ be the preimage of $1$ under $\det^m \colon Y_v^m \rar \R_m \bG_m$. Then $Y_{v,0}^m$ is connected: this is a byproduct of Lemma \ref{lm:coh_of_YsmZ} (i) below. The compatibility of $\det^m$ with changing the level is immediate. Thus it remains to prove part (iii) of the proposition. Equation $CD = 1$, holding on $Y_{v,0}^m$, inserted into \eqref{eq:Def_of_Y_m} shows the first claim of (iii). The second statement of (iii) is clear from parts (i),(ii). Inserting $C = c_0(1 + \sum_{i=1}^m c_it^i)$, $S = \sum_i f(a_i)t^i$ into \eqref{eq:easy_eq_for_Yv0m} shows \eqref{eq:defining_Ym_over_Ym-1}. \qedhere

\end{proof}


\subsection{Cohomology of $Y_v^m$}\label{sec:coh_of_Yvm}

We keep the notations from Sections \ref{sec:generalities_GL_2}-\ref{sec:structure_of_Yvm}. Fix a prime $\ell \neq \charac(k)$. We are interested in the $\ell$-adic cohomology with compact support of $Y_v^m$. For a variety $X/\bF_q$, we write $\coh_c^i(X)$ instead of $\coh_c^i(X,\overline{\bQ_{\ell}})$. Set $h_c^i(X) = \dim_{\overline{\bQ}_{\ell}} \coh_c^i(X)$. Further, $\coh_c^i(X)(r)$ denotes the Tate twist. 
Set:

\begin{eqnarray*}
N_- &=& \{a_0,c_0 \in \bar{k} \colon a_0 \in k_2 \sm k, c_0^{q+1} = f(a_0) \} \subseteq \bk \times \bk, \\
\end{eqnarray*}

\noindent and let $C_+,C_-$ be affine curves over $\bF_q$ defined by

\[C_{\pm} \colon x^q \pm x = y^{q+1}. \]

\noindent We write $V_{\pm} = \coh^1(C_{\pm})$. Note that $V_+,V_-$ are isomorphic as abstract vector spaces (as $C_+, C_-$ are isomorphic over $\bar{k}$) and only differ by the action of $\Frob_q$ on them. One has $\dim_{\overline{\bQ_{\ell}}} V_{\pm} = q(q-1)$. 

\begin{thm}\label{thm:coh_of_Yvm}
Let $0 \leq m < n$. Then $\coh_c^i(Y_v^m) = \bigoplus_{(k[t]/t^{m+1})^{\ast}} \coh_c^i(Y_{v,0}^m)$. Let $d_0 = d_0(n,m) = 2(n-1) + 2m+1$. Then $\coh_c^i(Y_{v,0}^m) = 0$ if $i > d_0 + 1$ or $i < d_0 - m$ and 

\begin{eqnarray*}
\coh_c^{d_0 + 1}(Y_{v,0}^m) &\cong& \bQ_{\ell}(-(n+m))  \\ 
\coh_c^{d_0} &\cong& V_-(-(n+m-1))   \\ 
\coh_c^{d_0 - j}(Y_{v,0}^m) &\cong& \bigoplus_{N_-} \overline{\bQ}_{\ell}^{q^{2(j-1)}(q-1)} \qquad \text{for any $1 \leq j \leq m$.}
\end{eqnarray*}

\noindent For $1 \leq j \leq m$ the action of $\Frob_{q^2}$ on $\coh_c^{d_0 - j}(Y_{v,0}^m)$ is given as follows: it acts by permuting the blocks corresponding to elements of $N_-$ (by $(a_0,c_0) \mapsto (a_0, -c_0)$) and acts as multiplication with the scalar $(-1)^{d_0 - j} q^{d_0 - j}$ in each of these blocks. 
\end{thm}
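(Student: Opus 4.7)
The first statement $\coh_c^i(Y_v^m)=\bigoplus_{(k[t]/t^{m+1})^\ast}\coh_c^i(Y_{v,0}^m)$ is immediate from the component decomposition in Proposition~\ref{prop:structure_of_Yvm_bla}(ii), so the task reduces to computing $\coh_c^\ast(Y_{v,0}^m)$. I plan to argue by induction on $m\geq 0$, using the tower of Artin-Schreier-type coverings provided by \eqref{eq:defining_Ym_over_Ym-1}.

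For the base case $m=0$, Proposition~\ref{prop:structure_of_Yvm_bla}(iii) identifies $Y_{v,0}^0$ with $Z\times\bA^{n-1}$, where $Z$ is the open subvariety of the curve $c_0^{q+1}=f(a_0)$ cut out by $a_0\notin k$; this is a Zariski open in $C_-$. The cohomology of $C_-$ is standard: its smooth projective compactification has genus $q(q-1)/2$ and a single point at infinity, so $\coh_c^\ast(C_-)$ is concentrated in degrees $1$ and $2$ with values $V_-$ and $\overline{\bQ}_\ell(-1)$. A single excision with the closed locus $\{a_0\in k\}$ together with K\"unneth for the $\bA^{n-1}$-factor then produces the two nonzero groups in degrees $d_0=2n-1$ and $d_0+1=2n$.

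For the inductive step, the cover $\pi\colon Y_{v,0}^m\twoheadrightarrow Y_{v,0}^{m-1}\times\bA^1$ in \eqref{eq:defining_Ym_over_Ym-1} is \'etale Galois with group $K=\ker(c\mapsto c^q+c)\subset\bar k$ of order $q$, and $\pi_\ast\overline{\bQ}_\ell=\bigoplus_{\chi\in\widehat K}\caL_\chi$. The trivial character, combined with K\"unneth and the inductive hypothesis, recovers the top two groups $\overline{\bQ}_\ell(-(n+m))$ and $V_-(-(n+m-1))$ in degrees $d_0+1$ and $d_0$ as well as the groups $\bigoplus_{N_-}\overline{\bQ}_\ell^{\,q^{2(j-1)}(q-1)}$ for $1\leq j\leq m-1$, each shifted from level $m-1$ by a degree-two shift and a Tate twist by $(-1)$. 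The \emph{new} group in degree $d_0-m$ arises entirely from the nontrivial characters $\chi\neq 1$, since $\coh_c^{d_0(m-1)-m}(Y_{v,0}^{m-1})=0$ by the inductive vanishing range. Because the right-hand side of \eqref{eq:defining_Ym_over_Ym-1} depends on $a_m$ only through the additive polynomial $a_m\mapsto f(a_m)/f(a_0)$, the sheaf $\caL_\chi$ restricted to an $a_m$-line is the pullback of a nontrivial Artin-Schreier sheaf on $\bA^1$; using $\coh_c^\ast(\bA^1,\caL_\psi)=0$ for nontrivial $\psi$, fibre integration forces the cohomology to be supported where this pullback becomes trivial, which is cut out by $f(a_0)\in\bF_q^\times$, equivalently $a_0\in k_2\setminus k$. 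Iterating this mechanism through the analogous Artin-Schreier equations for $c_1,\dots,c_{m-1}$ simultaneously shrinks the support to $N_-$ and leaves a free affine direction of dimension $2(m-1)$, accounting for the multiplicity $q^{2(m-1)}(q-1)$ over each point of $N_-$.

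The main obstacle is the careful execution of this iterated Artin-Schreier fibre integration: one must simultaneously track the vanishing of the nontrivial character contributions outside $N_-$ and the precise accumulation of free affine directions as one climbs the tower. Once the isomorphisms are in place, the Frobenius action is assembled from standard ingredients: the Tate twists come from K\"unneth with the $\bA^1$-factors, the standard weights of Artin-Schreier sheaves provide the scalar $q^{d_0-j}$ and the sign $(-1)^{d_0-j}$, and the involution $(a_0,c_0)\mapsto(a_0,-c_0)$ on $N_-$ is induced by $\Frob_{q^2}$ acting on the $(q{+}1)$-th roots of $f(a_0)\in k_2^\times$ through the order-$2$ element of $\mathrm{Gal}(k_2/k)$.
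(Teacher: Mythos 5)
Your high-level strategy (induction through the Artin--Schreier tower, base case on the Hermitian curve $C_-$) is the right one and matches the spirit of the paper's proof, but there are genuine gaps at exactly the places you flag as ``the main obstacle,'' and those places are where all the substance lives.

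\textbf{Iterated fibre integration is not carried out.} The covering equation \eqref{eq:defining_Ym_over_Ym-1} has, on the right side, the term $\sum_{i=1}^{m-1} c_i^q c_{m-i}$ --- a bilinear expression mixing \emph{all} the lower $c_i$'s at once. The $a_m$-fibre reduction you sketch does localize the nontrivial-character contribution to $\{a_0\in k_2\smallsetminus k\}$, but after that you are left with a sheaf on $Z^{m-1}$ whose defining character depends on all of $c_1,\dots,c_{m-1}$ through this bilinear form, not through a chain of independent Artin--Schreier lines. The paper handles this by a sequence of \emph{coordinate changes} (\eqref{eq:first_coord_change_for_Zm1}, \eqref{eq:first_coord_change_for_Zm2}) decoupling the bilinear form into quasi-Hermitian pieces, followed by a stratification of $Z^m$ by the hyperplane locus $H=\{c_1^{q^2}-c_1=0\}$ and a two-step index shift $Z^m\leadsto Z^{m-2}$; see Lemmas~\ref{lm:coh_of_YsmZ} and \ref{lm:induction_on_Z_tilde_Z} and the recursion \eqref{eq:coh_of_Zi_inductive}. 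None of this is a formality, and the proposal does not supply a substitute.

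\textbf{The dimension $q^{2(j-1)}(q-1)$ is not a ``free affine direction'' contribution.} A free affine factor $\bA^{2(m-1)}$ contributes a \emph{degree shift}, not a multiplicity factor $q^{2(m-1)}$ in a single cohomology group; that accounting is simply wrong. In the paper the multiplicity accrues from discrete factors ($N_+$ of size $q^3$ and $k_-$ of size $q$, giving $q^4$ per two-step iteration) together with the $(q-1)$-dimensional ``trace zero'' piece in the boundary map, cf.\ \eqref{eq:coh_of_Zi_inductive} and \eqref{eq:iterated_formula}. Your heuristic does not recover the exponent $2(j-1)$ for intermediate $j$ at all.

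\textbf{Frobenius: weights are not enough.} Standard Artin--Schreier/Deligne weight bounds give that the eigenvalues of $\Frob_{q^2}$ on $\coh_c^{d_0-j}$ have absolute value $q^{d_0-j}$, but \emph{not} that $\Frob_{q^2}$ acts by the scalar $(-1)^{d_0-j}q^{d_0-j}$ in each $N_-$-block. That is a much stronger purity-plus-rationality statement. The paper establishes it by showing the variety $Z^m_1$ is \emph{maximal} over $\bF_{q^2}$ (Lemma~\ref{lm:Zm1_maximality}), i.e.\ by a point-count argument forcing all eigenvalues to hit the Weil bound with the sign $(-1)^i$. Your proposal would need an equivalent of this; ``standard weights'' alone do not determine the sign, nor even that the action is scalar.

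The description of the $N_-$-involution coming from $\Frob_{q^2}$ acting on $(q{+}1)$-th roots of $f(a_0)$ is fine, and the base case $m=0$ is essentially correct. But as written the proposal is an outline with the two hardest steps explicitly deferred, and the heuristics offered for those steps (free affine directions, pure weight bookkeeping) would not, if executed literally, yield the stated dimensions or the stated Frobenius scalars.
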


\begin{rem}
We have chosen $d_0$ such that $\coh_c^{d_0 - j}(Y_v^m)$ corresponds to a $G(F)$-representation of level $j$ (cf. Definition \ref{Def:R_chi}).
\end{rem}

\begin{proof} The first statement of the theorem follows from Proposition \ref{prop:structure_of_Yvm_bla}. We need some further notation:
 
\begin{eqnarray*}
k_- = k_-(x) &=& \{x \in k_2 \colon x^q + x = 0 \} \subseteq k_2 \\ 
N_+ = N_+(x,y) &=& \{x,y \in \bar{k} \colon x \in k_2, y^{q+1} = x^q + x \} \subseteq \bk \times \bk .  
\end{eqnarray*}
 
\noindent Let $Y_{v,0}^{m,\prime}$ be the finite \'etale covering of the open subset $\{a_0 \not\in k \}$ of the $m+1$-dimensional affine space $\bA^{m+1}(a_0,\dots,a_m)$, which is defined by the same equations defining $Y_{v,0}^m$ (cf. \eqref{eq:defining_Ym_over_Ym-1}). There is a projection $Y_{v,0}^m \tar Y_{v,0}^{m,\prime}$ and the $I_{m,w_m}/I^m$-action on $Y_{v,0}^m$ induces a $T_{w,m}$-action on $Y_{v,0}^{m,\prime}$. We have $Y_{v,0}^m \cong Y_{v,0}^{m,\prime} \times \bA^{n - 1}(a_{m+1},\dots,a_{n-1}, A_0,\dots,A_{m-1})$ and hence $\coh_c^i(Y_{v,0}^m) = \coh_c^{i - 2(n-1)}(Y_{v,0}^{m,\prime})(-(n-1))$. For $m \geq 0$, let $Z^m \subseteq Y_{v,0}^{m,\prime}$ be the closed subscheme defined by the equation $a_0^{q^2} - a_0 = 0$. Note that $K$- and $T_{w,m}$-actions on $Y_{v,0}^{m,\prime}$ restrict to actions on $Z^m$ and that equation \eqref{eq:defining_Ym_over_Ym-1} defines $Z^m \subseteq Z^{m-1} \times \bA^1(c_m)$ as a covering of $Z^{m-1}$. For all $m \geq 1$, we make the following coordinate change: 
replace $a_m$ by $a_m^{\prime} = f(a_0)^{-\frac{
1}{q}}a_m - c_m$. For $a_0 \in k_2 \sm k$, one computes $f(a_0)^q = - f(a_0)$ and equation \eqref{eq:defining_Ym_over_Ym-1} simplifies over the locus $a_0 \in k_2 \sm k$ to 

\begin{equation} \label{eq:first_coord_change_for_Zm1}
a_m^{\prime,q} + a_m^{\prime} = \sum_{i=1}^{m-1}c_i^q c_{m-i}. 
\end{equation}

\noindent Make a second coordinate change: for all $m \geq 1$ replace $a_m^{\prime}$ by $\alpha_m = a_m^{\prime} - \sum_{i=1}^{\lfloor \frac{m-1}{2} \rfloor} c_i^q c_{m-i}$. This second coordinate change turns equation \eqref{eq:first_coord_change_for_Zm1} defining $Z^m$ over $Z^{m-1}$ into

\begin{equation} \label{eq:first_coord_change_for_Zm2}
\alpha_m^q + \alpha_m = \sum_{i=1}^{\lfloor \frac{m-1}{2} \rfloor} (c_i - c_i^{q^2}) c_{m-i}^q + \delta_m c_{m/2}^{q+1},
\end{equation}

\noindent where $\delta_m = 0$ if $m$ is odd and $\delta_m = 1$ if $m$ is even. All together, $Z^m$ is isomorphic to the locally closed subset of $\bA^{2m+2}(a_0,\alpha_1\dots,\alpha_m,c_0,c_1,\dots,c_m)$ defined by $a_0^{q^2} - a_0 = 0$, $a_0^q - a_0 \neq 0$, $c_0^{q+1} = a_0^q - a_0$ and the $m$ equations \eqref{eq:first_coord_change_for_Zm2} for $m^{\prime} = 1,2,\dots,m$. The first three of these equations and the equation \eqref{eq:first_coord_change_for_Zm2} for $m^{\prime} = 1$ obviously divide $Z^m$ into $N_- \times k_-(\alpha_1)$ components, which are all isomorphic, as one sees using $K$- and $T_{w,m}$-actions on $Z^m$. Thus $Z^m \cong \coprod_{N_- \times k_-(\alpha_1)} Z_0^m$, where $Z_0^m$ is the closed subvariety of $\bA^{2m-2}(\alpha_2,\dots, \alpha_m, c_1,\dots,c_m)$ defined by equations \eqref{eq:first_coord_change_for_Zm2} for $m^{\prime} = 2,\dots, m$.

\begin{lm}\label{lm:conn_cpts_of Z_m}
Let $m \geq 1$. Then $Z_0^m$ is connected, i.e., $\pi_0(Z^m) = N_- \times k_-(\alpha_1)$.
\end{lm}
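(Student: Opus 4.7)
I argue by induction on $m$. The base case $m = 1$ is trivial since $Z_0^1$ is a single point. For $m \ge 2$, assume $Z_0^{m-1}$ is connected; I will show the same for $Z_0^m$.

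The key structural feature of \eqref{eq:first_coord_change_for_Zm2} is that $c_m$ appears in no defining equation of $Z_0^m$, while $c_{m-1}$ appears only in the equation for $\alpha_m$, which reads $\alpha_m^q + \alpha_m = F_m(c_1, \ldots, c_{m-1})$. Hence the forgetful projection dropping only $\alpha_m$,
\[
\pi \colon Z_0^m \longrightarrow Z_0^{m-1} \times \mathbb{A}^1(c_m),
\]
is a single Artin--Schreier cover, with base connected by the inductive hypothesis. Its Galois group is the elementary abelian $p$-group $G := \ker\bigl(\mathbb{G}_a \xrightarrow{z \mapsto z^q + z} \mathbb{G}_a\bigr)$ of order $q$, and it remains to show that $\pi$ is connected, equivalently that $T^q + T - F_m$ is irreducible over the function field $K$ of $Z_0^{m-1} \times \mathbb{A}^1(c_m)$.

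The strategy is to view $F_m$ as a polynomial in the free coordinate $c_{m-1}$ over the function field of $Z_0^{m-1}$. For $m \ge 3$, only the $i = 1$ summand of \eqref{eq:first_coord_change_for_Zm2} contributes $c_{m-1}$, so
\[
F_m = (c_1 - c_1^{q^2})\,c_{m-1}^q + G_m,
\]
with $G_m$ independent of $c_{m-1}$ and with non-zero $c_{m-1}^q$-coefficient $c_1 - c_1^{q^2}$. Any relation $\lambda F_m = h^q + h$ with $\lambda \in \mathbb{F}_q^{\ast}$ and $h \in K$ would first force $h$ to be polynomial in $c_{m-1}$ (otherwise the pole order of $h^q + h$ would be non-zero, contradicting the polynomial nature of $F_m$); writing $h = \sum h_j c_{m-1}^j$, matching coefficients at degree $1$ forces $h_1 = 0$, matching at degree $q$ forces $h_q = \lambda(c_1 - c_1^{q^2}) \ne 0$, and matching at degrees $q^j$ for $j \ge 2$ forces the iterative relation $h_{q^j} = -h_{q^{j-1}}^q$. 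This produces a non-zero coefficient of $h$ at every degree $q^j$, contradicting the polynomiality of $h$. The case $m = 2$, where $F_2 = c_1^{q+1}$, is handled by the analogous degree-in-$c_1$ argument over $\overline{k}(c_2)$.

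I anticipate the main obstacle to be upgrading this non-triviality from scalar twists $\lambda \in \mathbb{F}_q^{\ast}$ to arbitrary characters $\chi \colon G \to \overline{\mathbb{Q}}_\ell^{\ast}$, which is what is actually required for the monodromy homomorphism into $G$ to be surjective. The plan is to invoke the $T_{w,m}$-equivariance of $\pi$ inherited from the $T_{w,m}$-action on $Y_{v,0}^m$ already used to decompose $Z^m$; this action commutes with $\pi$ and should permute the characters of $G$ in a Galois-compatible way, allowing one to reduce the irreducibility check to a single representative and hence to the scalar-twist computation above.
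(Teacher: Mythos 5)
Your approach is genuinely different from the paper's. The paper's proof of this lemma simply appeals to Lemma~\ref{lm:coh_of_YsmZ}(ii): over the open dense locus $c_1 - c_1^{q^2} \neq 0$, the fibers of $\tilde{Z}^{m-1} \to \tilde{Z}^{m-2}$ (forgetting $\alpha_m$ and $c_{m-1}$) are isomorphic to $\bA^1$, because the new equation $\alpha_m^q + \alpha_m = (c_1-c_1^{q^2})c_{m-1}^q + \dots$ lets one solve $c_{m-1}$ uniquely as a function of $\alpha_m$ after inverting Frobenius. Connectedness of $Z_0^m$ then follows inductively from connectedness of $\bA^1$-fibers, with no Artin--Schreier irreducibility question arising. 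You instead drop only $\alpha_m$, obtaining a degree-$q$ Artin--Schreier cover, and try to show the defining polynomial $T^q + T - F_m$ is irreducible over the function field.

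There is a real gap, and it is more serious than you acknowledge. You show $\lambda F_m \neq h^q + h$ for all $\lambda \in \bF_q^{\ast}$ and $h \in K$; but substituting $h = \lambda h'$ and using $\lambda^q = \lambda$ turns $\lambda F_m = h^q + h$ into $F_m = h'^q + h'$, so the $\lambda$-twists are literally vacuous. Your computation therefore only establishes $F_m \notin \{h^q + h : h \in K\}$, which says the monodromy image in $G = \ker(z \mapsto z^q + z)$ is nonzero, not that it is all of $G$. When $q = p$ this suffices (no intermediate subgroups), but for $q = p^s$ with $s \geq 2$ it does not. Closing the gap along your route requires ruling out $\tilde\phi(\gamma) = F_m$ for every proper left-divisor $\tilde\phi$ of $T^q + T$ in the twisted polynomial ring $\bar{k}\{\tau\}$ and every $\gamma \in K$; a similar degree-in-$c_{m-1}$ analysis can in fact accomplish this (one finds $\gamma$ has nonzero coefficients at an unbounded arithmetic progression of degrees), but that is materially more work than what you wrote, and the proposed $T_{w,m}$-equivariance fix is too vague to assess and is not what the paper does. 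A minor point: $Z_0^1$ is not a single point but $\bA^1(c_1)$ (the variable $c_1$ is free and there is no defining equation); it is still connected, so your base case survives, but the claim as stated is wrong.
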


\begin{proof}  We proceed by induction: for $m=0$, $Z_0^0$ is a point, thus connected. Let $m \geq 1$ and assume that $Z_0^{m-1}$ is connected. By Lemma \ref{lm:coh_of_YsmZ}(ii) below (this lemma is formulated for $\tilde{Z}_m$ instead of $Z^m \cong \tilde{Z}^{m-1} \times \bA^1(c_m)$ -- see below in the proof of the theorem), the fibers of $Z^m \rar Z^{m-1}$ (and hence also of $Z_0^m \rar Z_0^{m-1}$) over the open subset defined by $c_1^{q^2} - c_1 \neq 0$ are connected. Hence $Z_0^m$ is connected.
\end{proof}

We see that $Z_0^m$ is a connected \'etale covering of $\bA^m(c_1,\dots,c_m)$. Hence $\coh_c^i(Z^m) = 0$ for $i > 2m$ and $\coh_c^{2m}(Z^m) = \bigoplus_{N_- \times k_-(\alpha_1)} \bQ_{\ell}(-m)$. Consider now the decomposition in an open and a closed subset:
\begin{equation}\label{eq:MV_seq_ZYYZ} 
Y_{v,0}^{m,\prime} \sm Z^m \har Y_{v,0}^{m,\prime} \hookleftarrow Z^m. 
\end{equation}

\noindent Lemma \ref{lm:coh_of_YsmZ} shows that $\coh_c^i(Y_{v,0}^{m,\prime} \sm Z^m) = \coh_c^{i - 2m}(Y_{v,0}^{0,\prime} \sm Z^0)(-m)$ and $Y_{v,0}^{0,\prime} \sm Z^0$  can be identified with the open subset $C_- \sm N_-$ of the curve $C_-$ defined in the variables $a_0,c_0$. 

\begin{lm}\label{lm:delta_in_MV_non_triv}
In the long exact sequence for $\coh_c^{\ast}(\cdot)$ attached to \eqref{eq:MV_seq_ZYYZ} (cf. \cite{Mi} III \S1 Remark 1.30), the map 
\[ \delta_m \colon \coh_c^{2m}(Z^m) \rar \coh_c^{2m+1}(Y_{v,0}^{m,\prime} \sm Z^m) = \bigoplus_{N_-} \overline{\bQ}_{\ell}(-m) \oplus V_-(-m). \]
is surjective onto the first summand.
\end{lm}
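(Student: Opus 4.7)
I would reduce to the base case $m=0$ via functoriality of the long exact sequence under the natural projection
\[ \pi \colon Y_{v,0}^{m,\prime} \longrar Y_{v,0}^{0,\prime}, \qquad (a_0,a_1,\dots,a_m,c_0,c_1,\dots,c_m) \longmapsto (a_0,c_0). \]
Since $Z^m$ and $Z^0$ are cut out by the same single equation $a_0^{q^2} - a_0 = 0$, we have $Z^m = \pi^{-1}(Z^0)$ and $U^m := Y_{v,0}^{m,\prime} \sm Z^m = \pi^{-1}(U^0)$, with $U^0 := Y_{v,0}^{0,\prime} \sm Z^0$. Moreover, $\pi$ is smooth of relative dimension $m$: its fibers are disjoint unions of Artin--Schreier étale covers of $\bA^m$. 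Smoothness provides trace maps $\pi_!$ sitting in a commutative square of connecting morphisms
\[ \xymatrix@R=2.5em@C=3em{ \coh_c^{2m}(Z^m) \ar[r]^-{\delta_m} \ar[d]_-{\pi_!} & \coh_c^{2m+1}(U^m) \ar[d]^-{\pi_!} \\ \coh_c^{0}(Z^0)(-m) \ar[r]^-{\delta_0(-m)} & \coh_c^{1}(U^0)(-m) } \]
whose right vertical arrow I will identify with the isomorphism of Lemma \ref{lm:coh_of_YsmZ}.

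\textbf{Base case $m=0$.} I would apply the long exact sequence for the pair $(Y_{v,0}^{0,\prime}, Z^0)$:
\[ 0 \to \coh_c^0(Y_{v,0}^{0,\prime}) \to \coh_c^0(Z^0) \xrightarrow{\delta_0} \coh_c^1(U^0) \to \coh_c^1(Y_{v,0}^{0,\prime}) \to 0. \]
By Proposition \ref{prop:structure_of_Yvm_bla}(ii), $Y_{v,0}^{0,\prime}$ is a smooth connected open curve, so $\coh_c^0(Y_{v,0}^{0,\prime}) = 0$ and $\delta_0$ is injective. Combined with the identification $\coh_c^1(Y_{v,0}^{0,\prime}) \cong V_-$, this realizes the prescribed decomposition $\coh_c^1(U^0) = \bigoplus_{N_-}\overline{\bQ}_\ell \oplus V_-$ as $\im(\delta_0) \oplus V_-$, forcing $\im(\delta_0) = \bigoplus_{N_-}\overline{\bQ}_\ell$. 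This settles the $m=0$ case.

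\textbf{Reduction for $m \geq 1$.} The left vertical trace map
\[ \pi_! \colon \coh_c^{2m}(Z^m) = \bigoplus_{N_- \times k_-} \overline{\bQ}_\ell(-m) \longrar \coh_c^0(Z^0)(-m) = \bigoplus_{N_-} \overline{\bQ}_\ell(-m) \]
is the summation map over each $k_-$-block above a fixed $\xi \in N_-$ (the fiber of $\pi|_{Z^m}$ over $\xi$ consists of $q$ connected étale covers of $\bA^m$ indexed by $k_-$, each contributing a single top class), hence surjective. Chasing the square: the image of $\delta_m$ transported through the right vertical isomorphism contains $\delta_0(-m)(\im \pi_!) = \im(\delta_0)(-m) = \bigoplus_{N_-}\overline{\bQ}_\ell(-m)$, proving the lemma.

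\textbf{Main obstacle.} The principal difficulty is establishing the commutativity of the square and identifying the right vertical trace map with the isomorphism of Lemma \ref{lm:coh_of_YsmZ}. This requires showing that the Künneth-style $\bA^{2m}$-trivialization underlying that lemma (obtained via the coordinate changes used in the proof of Theorem \ref{thm:coh_of_Yvm}) coincides with the Gysin/trace formalism for the smooth morphism $\pi$, and that the identification extends compatibly across the boundary. Care is needed because the combinatorial structure of the fibers of $\pi$ jumps across the closed stratum $Z^\bullet$ (from $q^m$ components over $U^0$ to $q$ connected covers over $Z^0$), so the verification must be carried out in families rather than stratum by stratum.
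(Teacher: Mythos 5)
Your proposal is correct and takes essentially the same approach as the paper: it relates $\delta_m$ to $\delta_0$ through a commutative diagram induced by the natural projection, and settles the base case by noting that $\coh_c^0(Y_{v,0}^{0,\prime}) = 0$ forces $\delta_0$ to be injective, so that its image is identified with the first summand via the exact sequence. The paper organizes the descent as a level-by-level iteration (invoking Lemma \ref{lm:coh_of_YsmZ} at each step) rather than one Gysin/trace map, and it also gives a separate Frobenius-weight argument for the containment $\im(\delta_m) \subseteq \bigoplus_{N_-}\overline{\bQ}_\ell(-m)$, which in your version falls out of the commutative square once the right vertical arrow is identified with the isomorphism of Lemma \ref{lm:coh_of_YsmZ} --- exactly the compatibility you flag as the remaining technical point.
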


\begin{proof}
By comparing the Frobenius-weights (which is possible due to Lemma \ref{lm:coh_of_YsmZ}) we see that the image is contained in the first summand. On the other hand, the natural projection $Y_{v,0}^{m,\prime} \rar Y_{v,0}^{m-1,\prime}$ induces a morphism between the corresponding long exact sequenceses for $\coh_c^{\ast}(\cdot)$, which induces a commutative diagram relating $\delta_m$ with $\delta_{m-1}$. Iterating this for all levels $\geq 1$, we obtain a commutative diagram:

\centerline{
\begin{xy}\label{diag:character_isos_diag}
\xymatrix{
\bigoplus_{N_- \times k_-} \overline{\bQ}_{\ell}(-m) \ar@{=}[r] \ar@{->>}[d] & \coh_c^{2m}(Z^m) \ar[r]^(.4){\delta_m} \ar@{=}[d] & \coh_c^{2m+1}(Y_{v,0}^{m,\prime} \sm Z^m) \ar@{->>}[d] & \\
\bigoplus_{N_-} \overline{\bQ}_{\ell} \ar@{=}[r] & \coh_c^0(Z^0) \ar[r]^(.4){\delta_0} & \coh_c^1(C_- \sm N_-) \ar@{=}[r] & \bigoplus_{N_-} \overline{\bQ}_{\ell} \oplus V_-
}
\end{xy}
}
\noindent This diagram shows the lemma.\qedhere
\end{proof}

The long exact sequence for $\coh_c^{\ast}(\cdot)$ and Lemma \ref{lm:delta_in_MV_non_triv} implies:

\begin{equation}\label{eq:coh_of_Yvm0_prime}
\coh_c^i(Y_{v,0}^{m,\prime}) = 
\begin{cases} 
\coh_c^i(Z^m) & \text{if $i < 2m$} \\
\bigoplus_{N_-} [\bigoplus_{k_-(\alpha_1)} \overline{\bQ_\ell}(-m)]^{\sum_{\alpha_1} = 0} & \text{if $i = 2m$} \\
V_-(-m) & \text{if $i = 2m + 1$} \\
\bQ_{\ell}(-m-1) & \text{if $i = 2m+2$} \\
0 & \text{if $i > 2m+2$,}
\end{cases}
\end{equation}

\noindent where $\sum_{\alpha_1} = 0$ means that we take the subspace of trace zero elements. Let $\tilde{Z}^{m-1}$ be the closed subspace of $\bA^{2m+1}(a_0,\alpha_1, \dots,\alpha_m, c_0, c_1,\dots,c_{m-1})$ given by the same equations as $Z^m$: $a_0^{q^2} - a_0 = 0$, $a_0^q - a_0 \neq 0$, $c_0^{q+1} = a_0^q - a_0$ and equations \eqref{eq:first_coord_change_for_Zm2} for $m^{\prime} = 1, \dots, m$. Let $H = \{ c_1^{q^2} - c_1 = 0 \}$ be a finite union of hyperplanes in the same affine space. Then $Z^m \cong \tilde{Z}^{m-1} \times \bA^1(c_m)$. For $m \geq 3$ Lemma \ref{lm:coh_of_YsmZ}(ii) shows (here and until \eqref{eq:dim_coh_Zm} we ignore Tate twists):

\[ \coh_c^i(\tilde{Z}^{m-1} \sm H) = \coh_c^{i - 2(m-2)}(\tilde{Z}^1 \sm H) = 
\begin{cases} 
0 & \text{if $i \leq 2m - 4$} \\ 
\bigoplus\limits_{N_- \times k_-(\alpha_1)} [(\bigoplus\limits_{N_+(\alpha_2,c_1)} \overline{\bQ}_{\ell}) \oplus V_+] & \text{if $i = 2m - 3$} \\
\bigoplus\limits_{N_- \times k_-(\alpha_1)} \overline{\bQ}_{\ell} & \text{if $i = 2m - 2$},
\end{cases} \] 

\noindent because $\tilde{Z}^1 \sm H \cong \coprod_{N_- \times k_-(\alpha_1)} (C_+ \sm N_+)$. Further, Lemma \ref{lm:induction_on_Z_tilde_Z} shows $\tilde{Z}^{m-1} \cap H = \coprod_{k_-(\alpha_1) \times N_+(\alpha_2,c_1)} Z^{m-2}_{(1)}$, where $Z^{m-2}_{(1)} \cong Z^{m-2}$ and the index $(1)$ indicates the shift in variables given by $\alpha_i \mapsto \alpha_{i+2}, c_i \mapsto c_{i+1}$ (for $i \geq 1$) and hence $\coh_c^i(\tilde{Z}^{m-1} \cap H) = \bigoplus_{k_-(\alpha_1) \times N_+(\alpha_2,c_1)} \coh_c^i( Z^{m-2}_{(1)} )$ and, in particular, the top cohomology group of $\tilde{Z}^{m-1} \cap H$ is in degree $2m - 4$ and is equal to 

\[ \coh_c^{2m - 4}(\tilde{Z}^{m-1} \cap H) = \bigoplus_{k_-(\alpha_1) \times N_+(\alpha_2,c_1)} \coh_c^{2m-4}(Z_{(1)}^{m-2}) = \bigoplus_{k_-(\alpha_1) \times N_+(\alpha_2,c_1)}\bigoplus_{N_- \times k_-(\alpha_3)} \overline{\bQ}_{\ell}. \]

\noindent as follows from Lemma \ref{lm:conn_cpts_of Z_m} (note the index shift $\alpha_1 \mapsto \alpha_3$). All these, the long exact sequence for $\coh_c^{\ast}(\cdot)$ attached to

\[ \tilde{Z}^{m-1} \sm H \har \tilde{Z}^{m-1} \hookleftarrow \tilde{Z}^{m-1} \cap H, \]

\noindent the analog of Lemma \ref{lm:delta_in_MV_non_triv} for this sequence and Lemma \ref{lm:induction_on_Z_tilde_Z} show that for $m \geq 3$ we have:

\begin{equation}\label{eq:coh_of_Zi_inductive}
\coh_c^i(Z^m)  = \coh_c^{i-2}(\tilde{Z}^{m-1}) = 
\begin{cases} 
\bigoplus\nolimits_{k_-(\alpha_1) \times N_+(\alpha_2,c_1)} \coh_c^{i-2}(Z^{m-2}_{(1)}) & \text{if $i < 2m - 2$} \\
\bigoplus\nolimits_{N_- \times k_-(\alpha_1) \times N_+(\alpha_2,c_1)} [\bigoplus_{k_-(\alpha_3)} \overline{\bQ}_{\ell}]^{\sum_{\alpha_3} = 0} & \text{if $i = 2m - 2$} \\
\bigoplus\nolimits_{N_- \times k_-(\alpha_1)} V_+ & \text{if $i = 2m - 1$} \\
\bigoplus\nolimits_{N_- \times k_-(\alpha_1)} \overline{\bQ}_{\ell} & \text{if $i = 2m$.}
\end{cases} 
\end{equation}

\noindent Note that $\sharp N_+ = q^3, \sharp k_- = q$. Hence for $m \geq 3$, we have $h_c^{2m}(Z^m) = (\sharp N_-)q$ and $h_c^{2m - j}(Z^m) = (\sharp N_-)q^{2j}(q - 1)$ for $j \in \{ 1,2 \}$. For $Z^1, Z^2$ one computes: $\coh_c^2(Z^1) = \bigoplus_{N_- \times k_-} \overline{\bQ}_{\ell}$ and $\coh_c^i(Z^1) = 0$ if $i \neq 2$ and 

\begin{equation}\label{eq:coh_of_Z2}
\coh_c^i(Z^2)  = \coh_c^{i-2}(\tilde{Z}^1) = 
\begin{cases} 
0 & \text{if $i \leq 2$ or $i \geq 5$} \\
\bigoplus\nolimits_{N_- \times k_-(\alpha_1) } V_+ & \text{if $i = 3$} \\
\bigoplus\nolimits_{N_- \times k_-(\alpha_1)} \overline{\bQ}_{\ell} & \text{if $i = 4$.}
\end{cases} 
\end{equation}

Let now $m \geq 3$. For $j > 0$, write $j = 2 \lfloor \frac{j - 1}{2} \rfloor + j^{\prime}$, where $j^{\prime} = 1$ if $j$ odd, $j^{\prime} = 2$ otherwise. Iterating \eqref{eq:coh_of_Zi_inductive} $\lfloor \frac{j - 1}{2} \rfloor$ times, we get for all $0 < j < m$:

\begin{equation} \label{eq:iterated_formula}
h_c^{2m-j}(Z^m) = q^{4 \lfloor \frac{j - 1}{2} \rfloor} h_c^{2(m - 2 \lfloor \frac{j - 1}{2} \rfloor ) - j^{\prime}}(Z^{m - 2\lfloor \frac{j - 1}{2} \rfloor}_{(\lfloor \frac{j - 1}{2} \rfloor)}  )  = (\sharp N_-)q^{2j}(q-1).
\end{equation}

\noindent where $Z^m_{(l)} \cong Z^m$ using the index shift as above $l$ times. Thus for all $m \geq 1$, $j > 0$: 

\begin{equation}\label{eq:dim_coh_Zm} 
h_c^{2m + 1 - j}(Z^m) = \begin{cases} (\sharp N_-) q & \text{if $j = 1$} \\ (\sharp N_-) q^{2(j-1)}(q - 1) & \text{if $1 < j \leq m$} \\ 0 & \text{otherwise.} \end{cases} 
\end{equation}

\noindent Combined with \eqref{eq:coh_of_Yvm0_prime}, this implies the dimension formula in the theorem. It remains to compute the Frobenius action. The Tate twists in the two top cohomology groups of $Y_{v,0}^m$ can be deduced easily by relating $Y_{v,0}^m$ with $Y_{v,0}^{m,\prime}$. To prove the claim about $\Frob_{q^2}$-action in degrees $i \leq d_0 - 1$, note that $\Frob_{q^2}$ acts on $N_-$ by $(a_0,c_0) \mapsto (a_0, -c_0)$. Further, let $Z^m_1$ be the subvariety of $\bA^{2m}(c_1,\dots,c_m,a_1^{\prime},\dots,a_m^{\prime})$ defined by $m$ equations \eqref{eq:first_coord_change_for_Zm1} for $m^{\prime} = 1, \dots, m$, i.e., $Z^m = N_- \times Z^m_1$, where $N_-$ is seen as a discrete variety. Lemma \ref{lm:Zm1_maximality} shows that $Z^m_1$ is a maximal variety over $\bF_{q^2}$ (for a definition cf. the paragraph preceding Lemma \ref{lm:Zm1_maximality}), i.e., $\Frob_{q^2}$ acts in $\coh_c^i(Z^m_1)$ by $(-1)^i(q^2)^{i/2} = (-q)^i$ for any $i \in \bZ$. Further we have for all $2 \leq j \leq m$:

\[ \coh_c^{d_0 - j}(Y_{v,0}^m) = \coh_c^{2m + 1 - j}(Y_{v,0}^{m,\prime})(-(n-1)) = \coh_c^{2m + 1 - j}(Z^m)(-(n-1)) \]

\noindent (note that for $j = 1$, this remains true if one replaces the second equality by an inclusion, cf. \eqref{eq:coh_of_Yvm0_prime}). This implies the last statement of the theorem. \qedhere

\end{proof}

\begin{lm} \label{lm:coh_of_YsmZ} With notations as in the proof of Theorem \ref{thm:coh_of_Yvm}, we have:
\begin{itemize}
\item[(i)] Let $m \geq 1$. The fibers of the natural projection $\pi \colon Y_{v,0}^{m,\prime} \sm Z^m \rar Y_{v,0}^{m-1,\prime} \sm Z^{m-1}$ are isomorphic to $\bA^1$. We have:
\[ \coh_c^i(Y_{v,0}^{m,\prime} \sm Z^m) = \coh^{i-2}(Y_{v,0}^{m-1,\prime} \sm Z^{m-1})(-1). \]
\item[(ii)] Let $m \geq 3$. The fibers of the natural projection $\tilde{Z}^{m-1} \sm H \rar \tilde{Z}^{m-2} \sm H$ are isomorphic to $\bA^1$. We have:
\[ \coh_c^i(\tilde{Z}^{m-1} \sm H) = \coh_c^{i-2}(\tilde{Z}^{m-2} \sm H)(-1). \]
\end{itemize}
\end{lm}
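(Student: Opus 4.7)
Both parts will be handled by the same three-step strategy: write down the defining equation of a fiber, perform a Frobenius-aware coordinate change to identify each geometric fiber with $\bA^1$, and then invoke universal-homeomorphism invariance of $\ell$-adic cohomology to promote this fiberwise identification to the desired cohomological statement.

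For (i) I would observe that the fiber of $\pi$ over a point of $W := Y_{v,0}^{m-1,\prime} \sm Z^{m-1}$ is the curve in $\bA^2(a_m, c_m)$ cut out by equation \eqref{eq:defining_Ym_over_Ym-1}, and perform the substitution $u = a_m - f(a_0)^{1/q} c_m$ over $\bk$, which rewrites this equation as
\[
c_m \bigl(1 + f(a_0)^{(1-q)/q}\bigr) = \frac{u^q - u}{f(a_0)} - \sum_{i=1}^{m-1} c_i^q c_{m-i}.
\]
A short case analysis in odd and even characteristic shows that $1 + f(a_0)^{(1-q)/q}$ vanishes precisely when $a_0 \in k_2$, so the coefficient is a unit on $W$; hence $c_m$ and $a_m = u + f(a_0)^{1/q} c_m$ are regular functions of $u$, exhibiting each fiber as $\bA^1(u)$. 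For (ii) the analogous substitution $u = \alpha_m - \lambda^{1/q} c_{m-1}$ in equation \eqref{eq:first_coord_change_for_Zm2} for $m' = m$ (with $\lambda = c_1 - c_1^{q^2}$; the hypothesis $m \geq 3$ is what ensures that the term $(c_1 - c_1^{q^2}) c_{m-1}^q$ is actually present in the equation) explicitly yields $(\alpha_m, c_{m-1}) = \bigl(\mu - u^q,\, \lambda^{-1/q}(\mu - u^q - u)\bigr)$, where $\mu$ collects all terms independent of $\alpha_m, c_{m-1}$; no further case analysis is needed since $\lambda$ is a unit on $H^c$ by construction.

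The main obstacle will be that $f(a_0)^{1/q}$ and $\lambda^{1/q}$ are not regular functions on the base, so the fiberwise trivialization is not literally a morphism over $W$. To overcome this I would pass to the base change $W' \to W$ along the purely inseparable map $(\cdot)^q \colon \Gm \to \Gm$ applied to the relevant unit ($f(a_0)$ in case (i), $\lambda$ in case (ii)), so that $\tilde{f}$ with $\tilde{f}^q = f(a_0)$ becomes tautologically a regular function on $W'$. Both $W' \to W$ and its base change $V \times_W W' \to V$ are then universal homeomorphisms, and hence induce isomorphisms on $\ell$-adic cohomology with compact support; over $W'$ the substitution $u = a_m - \tilde{f} c_m$ now defines a genuine isomorphism $V \times_W W' \cong W' \times \bA^1(u)$ of $W'$-schemes, where one verifies that the coefficient $1 + \tilde{f}^{1-q} = (f(a_0) + \tilde{f})/f(a_0)$ does not vanish on $W'$ since $\tilde{f} = -f(a_0)$ would force $a_0 \in k_2$. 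Combining all of this with Künneth for the trivial $\bA^1$-bundle yields
\[
\coh_c^i(V) = \coh_c^i(V \times_W W') = \coh_c^i(W' \times \bA^1) = \coh_c^{i-2}(W')(-1) = \coh_c^{i-2}(W)(-1),
\]
which is the claim in (i); the argument for (ii) is identical with $\lambda$ in the role of $f(a_0)$.
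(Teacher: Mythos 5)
Your fiberwise coordinate change is the same as the paper's up to a scalar (the paper sets $a_m' = f(a_0)^{-1/q} a_m - c_m$, so your $u = f(a_0)^{1/q} a_m'$), and your verification that $1 + f(a_0)^{(1-q)/q}$ is a unit away from $a_0 \in k_2$ reproduces the paper's assertion that $f(a_0)^{1/q-1} \neq 0,-1$. The genuinely different part is the globalization. The paper deduces the cohomological statement from the Leray spectral sequence for $\pi_!$: since the geometric fibers are $\bA^1$, proper base change gives $\R^j_c\pi_*\overline{\bQ_\ell} = 0$ for $j \neq 2$, and the identification $\R^2_c\pi_*\overline{\bQ_\ell} \cong \overline{\bQ_\ell}(-1)$ (which in general requires noting that $\pi$ is smooth of relative dimension one and invoking the trace map, a point the paper leaves implicit but which is easy since $\partial/\partial c_m$ of the defining equation is $1$). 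You instead literally trivialize the fibration after passing to the universal homeomorphism $W' \to W$ that adjoins a $q$-th root of $f(a_0)$ (resp.\ $\lambda$), and then apply topological invariance of étale cohomology under universal homeomorphisms together with Künneth for $W' \times \bA^1$. Both routes are correct and compatible with the Frobenius structure; yours is more explicit and avoids any appeal to smoothness of $\pi$ or to the Leray spectral sequence, at the cost of the base-change construction and the (standard but worth knowing) fact that finite universal homeomorphisms induce isomorphisms on $\coh_c^*$. A small remark: the RHS in part (i) of the statement reads $\coh^{i-2}$ rather than $\coh_c^{i-2}$, which is surely a typo; your proof, like the paper's, yields the compactly-supported version on both sides.
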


\begin{proof}[Proof of Lemma \ref{lm:coh_of_YsmZ}] Let us prove part (i). The scheme $Y_{v,0}^{m,\prime} \sm Z^m$ is the closed subspace of $(Y_{v,0}^{m-1,\prime} \sm Z^{m-1}) \times \bA^2(a_m,c_m)$ defined by the equation $\eqref{eq:defining_Ym_over_Ym-1}$. Letting $x$ be a point of $Y_{v,0}^{m-1,\prime} \sm Z^{m-1}$, we see that the fiber of $\pi$ over $x$ is given by the equation 
\[ c_m^q + c_m = f(a_0(x))^{-1} (a_m^q - a_m) + \lambda(x), \]
\noindent with $a_0(x) \not\in k_2$ the $a_0$-coordinate of $x$ and $\lambda(x) \in \bar{k}$ depending on $x$. Using the coordinate change $a_m^{\prime} = f(a_0(x))^{-\frac{1}{q}} a_m - c_m$, this equation can be rewritten as

\[ a_m^{\prime,q} - f(a_0)^{\frac{1}{q} - 1} a_m^{\prime} = (1 + f(a_0(x))^{\frac{1}{q} - 1})c_m - \lambda(x). \]

\noindent As $a_0(x) \not\in k_2$ we have $f(a_0(x))^{\frac{1}{q} - 1} \neq 0,-1$ and hence the fiber of $\pi$ over $x$ is isomorphic (over $\bk$) to the Artin-Schreier covering of $\bA^1(c_m)$, hence is itself isomorphic to the affine line. This shows the first statement of the lemma. 

For the second statement, note that as the fibers of $\pi$ are $\cong \bA^1$, we have $\R_c^2\pi_{\ast}\overline{\bQ_{\ell}} \cong \overline{\bQ_{\ell}}(-1)$ and $\R_c^j\pi_{\ast}\overline{\bQ_{\ell}} = 0$ for $j \neq 2$. This together with the spectral sequence 

\[ \coh_c^i(Y_{v,0}^{m-1,\prime} \sm Z^{m-1}, \R_c^j\pi_{\ast}\overline{\bQ_{\ell}}) \Rar \coh_c^{i+j}(Y_{v,0}^{m,\prime} \sm Z^m) \] 

\noindent implies the second statement of part (i). Part (ii) of the lemma has a similar proof, using \eqref{eq:first_coord_change_for_Zm2} instead of \eqref{eq:defining_Ym_over_Ym-1}.
\end{proof}

\begin{lm}\label{lm:induction_on_Z_tilde_Z}
With notations as in the proof of Theorem \ref{thm:coh_of_Yvm}, for $m \geq 3$, we have $\tilde{Z}^{m-1} \cap H \cong \coprod\limits_{k_-(\alpha_1)} \coprod\limits_{N_+(\alpha_2,c_1)}  Z^{m-2}$. 
\end{lm}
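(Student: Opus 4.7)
The plan is to restrict the defining equations of $\tilde{Z}^{m-1}$ to the hyperplane arrangement $H$ and observe that an elementary index shift identifies what remains with a finite disjoint union of copies of $Z^{m-2}$. I would organise the defining equations \eqref{eq:first_coord_change_for_Zm2} into three groups: the levels $m'=1,2$, which produce the combinatorial index $k_-(\alpha_1)\times N_+(\alpha_2,c_1)$; the levels $m'\geq 3$, which after restriction to $H$ and relabelling will give the equations cutting out $Z^{m-2}$; and the base conditions on $a_0,c_0$, which are left untouched.

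For the first step, equation \eqref{eq:first_coord_change_for_Zm2} with $m'=1$ has empty sum and $\delta_1=0$, so it reads $\alpha_1^q+\alpha_1=0$, i.e.\ $\alpha_1\in k_-$. For $m'=2$ the sum is again empty but $\delta_2=1$, giving $\alpha_2^q+\alpha_2=c_1^{q+1}$, i.e.\ $(\alpha_2,c_1)\in N_+$. These two equations cut out precisely the discrete factor $k_-(\alpha_1)\times N_+(\alpha_2,c_1)$ in the coordinates $(\alpha_1,\alpha_2,c_1)$.

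The crucial observation is that on $H$ one has $c_1^{q^2}=c_1$, so every summand containing the factor $c_i-c_i^{q^2}$ with $i=1$ on the right-hand side of \eqref{eq:first_coord_change_for_Zm2} vanishes; that is, the term $i=1$ drops out of the sum. Hence for $m'\geq 3$ the equation restricts on $\tilde{Z}^{m-1}\cap H$ to
\[
\alpha_{m'}^q+\alpha_{m'}=\sum_{i=2}^{\lfloor (m'-1)/2\rfloor}(c_i-c_i^{q^2})\,c_{m'-i}^q+\delta_{m'}c_{m'/2}^{q+1}.
\]
Performing the shift $\alpha'_l := \alpha_{l+2}$, $c'_j := c_{j+1}$ and substituting $j=i-1$, $l=m'-2$ converts this into
\[
{\alpha'_l}^{\,q}+\alpha'_l=\sum_{j=1}^{\lfloor (l-1)/2\rfloor}\bigl(c'_j-{c'_j}^{\,q^2}\bigr)\,{c'_{l-j}}^{\,q}+\delta_{l}\,{c'_{l/2}}^{\,q+1},
\]
which is exactly equation \eqref{eq:first_coord_change_for_Zm2} at level $l$ in the primed variables, for $l=1,\dots,m-2$; here I use the parity invariance $\delta_{l+2}=\delta_l$ and the identity $c_{(l+2)/2}=c'_{l/2}$ when $l$ is even.

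Finally, the base conditions $a_0^{q^2}=a_0$, $a_0^q\neq a_0$, $c_0^{q+1}=a_0^q-a_0$ cutting out the $N_-$-factor in the $(a_0,c_0)$-coordinates are identical in $\tilde{Z}^{m-1}$ and $Z^{m-2}$ and are untouched by the shift. Assembling the three pieces yields the asserted isomorphism, with the map given by the obvious identification of affine-space coordinates via the shift. The only delicate point is matching the summation ranges under the change $(i,m')\mapsto(j,l)=(i-1,m'-2)$, which is the main (and purely combinatorial) obstacle; once this bookkeeping is done, the rest is a direct substitution of coordinates.
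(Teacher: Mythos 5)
Your proof is correct and follows essentially the same approach as the paper: the $i=1$ summand of \eqref{eq:first_coord_change_for_Zm2} vanishes on $H$, the $m'=1,2$ equations (together with the $H$-condition, which is also what forces $\alpha_2\in k_2$) produce the discrete index set $k_-\times N_+$, and the index shift $\alpha_i\mapsto\alpha_{i-2}$, $c_i\mapsto c_{i-1}$ converts the remaining equations into those defining $Z^{m-2}$. The paper's version is terser (stated only at the top level), but the idea is identical.
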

\begin{proof}[Proof of Lemma \ref{lm:induction_on_Z_tilde_Z}] On the union of hyperplanes $H$, the term $(c_1 - c_1^{q^2})c_{m-1}^q$ in the equation \eqref{eq:first_coord_change_for_Zm2} defining $\tilde{Z}^{m-1}$ over $\tilde{Z}^{m-2}$ cancels, leaving the free variable $c_m$ and the equation arising from it (after renaming the variables by $\alpha_i \mapsto \alpha_{i-2}$ for $i \geq 3$, $c_i \mapsto c_{i-1}$ for $i \geq 2$) is simply the equation defining $Z^{m-2}$ over $Z^{m-3}$. The lemma follows from this observation.
\end{proof}

We recall the definition of maximal varieties from the introduction of \cite{BW}, where it appears in a similar setup. Let $X$ be a scheme of finite type over a finite field $\bF_Q$ with $Q$ elements. Let $\Frob_Q$ denote the Frobenius over $\bF_Q$. By \cite{De} Theorem 3.3.1, for each $i$ and each eigenvalue $\alpha$ of $\Frob_Q$ in $\coh_c^i(X)$, there exists an integer $i^{\prime} \leq i$, such that all complex conjugates of $\alpha$ have absolute value $Q^{i^{\prime}/2}$. Hence by Grothendieck-Lefschetz formula we get an upper bound on the number of points on $X$:

\begin{equation}\label{eq:upper_bound_via_Lefshetz} 
\sharp X(\bF_Q) = \sum_{i \in \bZ} (-1)^i \tr(\Frob_Q; \coh_c^i(X)) \leq \sum_{i \in \bZ} Q^{i/2} h_c^i(X), 
\end{equation}

\noindent where equality holds if and only if $\Frob_Q$ acts in $\coh_c^i(X)$ by the scalar $(-1)^i Q^{i/2}$ for each $i \in \bZ$. If this is the case, then $X/\bF_Q$ is called \textit{maximal}.

\begin{lm}\label{lm:Zm1_maximality}
Let $Z^m_1$ be as in the proof of Theorem \ref{thm:coh_of_Yvm}. For $m \geq 1$, $Z^m_1$ is a maximal variety over $\bF_{q^2}$.
\end{lm}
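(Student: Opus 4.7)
\begin{pro}
The plan is to verify the equality case of the Weil--Deligne upper bound \eqref{eq:upper_bound_via_Lefshetz}, which by definition characterizes maximality. Concretely, I will show that both
\[ \sharp Z^m_1(\bF_{q^2}) \quad\text{and}\quad \sum_i q^i h_c^i(Z^m_1) \]
equal $q^{3m}$, and then conclude.

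First, I would compute $\sharp Z^m_1(\bF_{q^2})$ directly. The key observation is that for any $c=(c_1,\dots,c_m)\in\bF_{q^2}^m$ and any $1\le m'\le m$, the right-hand side $\beta_{m'}(c):=\sum_{i=1}^{m'-1}c_i^q c_{m'-i}$ of \eqref{eq:first_coord_change_for_Zm1} lies in $\bF_q$. Indeed, applying $(\cdot)^q$ and using $c_i^{q^2}=c_i$ gives $\beta_{m'}(c)^q=\sum_i c_i c_{m'-i}^q$, and relabeling $j=m'-i$ shows this equals $\beta_{m'}(c)$. Since the Artin--Schreier map $\bF_{q^2}\to\bF_{q^2}$, $y\mapsto y^q+y$, coincides with the trace $\Tr_{\bF_{q^2}/\bF_q}$ and has kernel $k_-$ of cardinality $q$, each equation $a_{m'}^{\prime,q}+a_{m'}'=\beta_{m'}(c)\in\bF_q$ has exactly $q$ solutions in $\bF_{q^2}$. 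This yields $\sharp Z^m_1(\bF_{q^2})=q^{2m}\cdot q^m=q^{3m}$.

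Second, the dimensions $h_c^i(Z^m_1)$ are essentially already known from the proof of Theorem \ref{thm:coh_of_Yvm} (which established \eqref{eq:dim_coh_Zm} without using this lemma). Combining $Z^m\cong N_-\times Z^m_1$ over $\bar k$ with K\"unneth, one has $h_c^i(Z^m_1)=h_c^i(Z^m)/\sharp N_-$, so $h_c^{2m}(Z^m_1)=q$, $h_c^{2m+1-j}(Z^m_1)=q^{2(j-1)}(q-1)$ for $2\le j\le m$, and $0$ otherwise. A short geometric-series computation then gives
\[ \sum_i q^i h_c^i(Z^m_1) = q^{2m+1} + (q-1)q^{2m+1}\sum_{j=2}^m q^{j-2} = q^{2m+1}\cdot q^{m-1} = q^{3m}. \]

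Combining the two computations, equality holds in \eqref{eq:upper_bound_via_Lefshetz}, and this is exactly maximality of $Z^m_1/\bF_{q^2}$. The only non-bookkeeping step, and thus the main (but mild) obstacle, is the symmetry argument that $\beta_{m'}(c)\in\bF_q$ for $c\in\bF_{q^2}^m$; this is what ensures that every Artin--Schreier equation defining $Z^m_1$ splits completely over $\bF_{q^2}$ and supplies the maximal possible number of rational points.
\end{pro}
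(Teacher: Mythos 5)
Your proof is correct and follows essentially the same route as the paper's: both compute $\sum_i q^i h_c^i(Z^m_1)=q^{3m}$ from \eqref{eq:dim_coh_Zm}, then show $\sharp Z^m_1(\bF_{q^2})=q^{3m}$ by observing that each right-hand side of \eqref{eq:first_coord_change_for_Zm1} is fixed by $(\cdot)^q$ (hence lies in $\bF_q$), so each Artin--Schreier equation contributes exactly $q$ solutions over $\bF_{q^2}$. The only cosmetic difference is that you spell out the relabeling symmetry that forces $\beta_{m'}(c)\in\bF_q$, which the paper states without comment, and you note explicitly (correctly) that \eqref{eq:dim_coh_Zm} was established independently of this lemma, avoiding circularity.
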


\begin{proof}
$\Frob_{q^2}$ acts on $\coh_c^i(Z^m_1)$ as an endomorphism with eigenvalues being Weil numbers with absolute value $(q^2)^{i^{\prime}/2} \leq (q^2)^{i/2} = q^i$. From \eqref{eq:upper_bound_via_Lefshetz} we obtain the upper bound $u(Z^m_1,q^2)$ for the number of $\bF_{q^2}$-points on $Z^m_1$:

\[ \sharp Z^m_1(\bF_{q^2}) \leq u(Z^m_1,q^2) = \sum_{i = 0}^{2m} q^ih_c^i(Z^m_1). \]

\noindent Using equation \eqref{eq:dim_coh_Zm}, we see that $u(Z^m_1,q^2) = q^{3m}$. On the other hand, let $p_i(\underline{c}) = \sum_{j=1}^{i-1}c_jc_{i-j}^q$ and let $c_j \in \bF_{q^2}$ for $j =1,\dots,m$ be given. Then we have $p_i(\underline{c})^q = p_i(\underline{c})$, i.e., $p_i((c_j)_{j=1}^{i-1}) \in \bF_q$ for all $1 \leq i \leq m$. But the equation $x^q + x = \lambda \in \bF_q$ has precisely $q$ solutions in $\bF_{q^2}$. Thus for each given point $(c_1,\dots,c_m) \in \bA^m(\bF_{q^2})$, there are exactly $q^m$ points in $Z^m_1$ lying over it (cf. equation \eqref{eq:first_coord_change_for_Zm1}). Thus $\sharp Z^m_1(\bF_{q^2}) = q^{3m}$, which finishes the proof.
\end{proof}


\subsection{Character subspaces} \label{sec:character_subspaces}

We keep notations from Sections \ref{sec:generalities_GL_2}-\ref{sec:coh_of_Yvm} and deduce some corollaries from Theorem \ref{thm:coh_of_Yvm}.

\begin{lm}\label{lm:IIm-action_factors_over_a_Twm_action}
Let $m \geq 0$. The $I_{m,w_m}/I^m$-action on $\coh_c^i(Y_v^m)$ factors through a $T_{w,m}$-action.
\end{lm}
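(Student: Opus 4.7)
The plan is to identify the kernel of the natural surjection $I_{m,w_m}/I^m \tar T_{w,m}$ as a connected vector group that acts trivially on $\coh_c^i(Y_v^m)$.

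By Lemma~\ref{lm:I/Im-quot-structure}, projection to the diagonal gives a split short exact sequence
\[ 1 \rar U \rar I_{m,w_m}/I^m \rar T_{w,m} \rar 1, \]
where $U = \left\{\matzz{1}{A}{0}{1}\right\}$ is a connected commutative unipotent algebraic group, isomorphic as a variety over $\bk$ to an affine space. Hence it suffices to show that $U$ acts trivially on $\coh_c^i(Y_v^m)$.

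Using the parametrization~\eqref{eq:explicit_Cvm_param_GL_2} together with the relation $B = 0$ imposed on $Y_v^m$ by~\eqref{eq:Def_of_Y_m}, a direct computation shows that right multiplication by $\matzz{1}{A'}{0}{1} \in U$ sends $\psi_v^m(a,C,D,A,0)$ to $\psi_v^m(a,C,D,A+A',0)$; that is, $U$ acts on $Y_v^m$ simply by translation in the $A$-coordinate. Since the variable $A$ does not appear in any of the defining relations~\eqref{eq:Def_of_Y_m}, forgetting $A$ produces a product decomposition $Y_v^m \cong Y'' \times V$, where $V$ is an affine space parametrizing $A$; under this decomposition $U$ acts trivially on $Y''$ and by translation on $V$.

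A K\"unneth argument then reduces the claim to the fact that translations act trivially on the $\ell$-adic cohomology of affine space. Indeed, the top compactly-supported cohomology $\coh_c^{2\dim V}(V)$ is one-dimensional, and the $U$-action on it is given by an algebraic character $U \rar \overline{\bQ}_\ell^\ast$; since $U$ is a vector group, every such character is trivial. Hence $U$ acts trivially on $\coh_c^\ast(V)$ and therefore on $\coh_c^\ast(Y_v^m)$. The only non-formal ingredient is the explicit computation of the $U$-action; the remainder of the argument is routine.
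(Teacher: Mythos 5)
Your proposal is correct and follows essentially the same route as the paper: identify the kernel $U$ of $I_{m,w_m}/I^m \tar T_{w,m}$ via Lemma~\ref{lm:I/Im-quot-structure}, observe that it acts only by translating the free $A$-coordinates, split off the corresponding affine-space factor, and apply K\"unneth (the paper's decomposition $Y_v^m = Y_v^{m,\prime}\times\bA^{n-1}$ also puts the $a_{m+1},\dots,a_{n-1}$ into the affine factor, but that makes no difference here). The one imprecision is the final justification: $U(\bar k)$ is an infinite $p$-torsion abelian group, which does carry plenty of nontrivial $\overline{\bQ}_\ell^{\ast}$-valued characters, so "since $U$ is a vector group, every such character is trivial'' is not literally correct as stated; the right way to conclude is either that the trace isomorphism $\coh_c^{2\dim V}(V)\cong\bQ_\ell(-\dim V)$ is canonical and hence preserved under pullback by any automorphism of the connected variety $V$, or more generally that a \emph{connected} algebraic group acting algebraically on a variety acts trivially on its $\ell$-adic cohomology.
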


\begin{proof} This is immediate as the action of $\ker(I_{m,w_m}/I^m \tar T_{w,m})$ on \\ $Y_v^m = Y_v^{m,\prime} \times \bA^{n - 1}(a_{m+1},\dots,a_{n-1}, A_0,\dots,A_{m-1})$ (where $Y_v^{m,\prime}$ is defined analogously to $Y_{v,0}^{m,\prime}$ in the proof of theorem \ref{thm:coh_of_Yvm}) comes from an action on $\bA^{n - 1}$, which contributes to the cohomology of $Y_v^m$ only via a dimension shift. \qedhere
\end{proof}

For an abelian (locally compact) group $A$, let $A^{\vee}$ denote the group of (smooth) $\overline{\bQ}_{\ell}^{\ast}$-valued characters of $A$. By Lemma \ref{lm:IIm-action_factors_over_a_Twm_action} we have a decomposition 

\begin{equation} \label{eq:dec_for_torus_chars_in_gerneral}
\coh_c^i(Y_v^m) = \bigoplus_{\chi \in T_{w,m}^{\vee}} \coh_c^i(Y_v^m)[\chi] 
\end{equation}

\noindent into isotypical components with respect to the action of $T_{w,m}$. 

\begin{cor}\label{cor:non-triv-character-spaces}
Let $m \geq 1$ and $1 \leq j \leq m$. Let $\chi \colon T_{w,m} \rar \overline{\bQ}_{\ell}^{\ast}$ be a character. Then $\Frob_{q^2}$ acts in $\coh_c^{d_0 - j}(Y_v^m)[\chi]$ by multiplication with the scalar $\chi(-1)(-1)^{d_0 - j} q^{d_0 - j}$.  
\end{cor}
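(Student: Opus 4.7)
The plan is to realize the geometric Frobenius on $\coh_c^{d_0-j}(Y_v^m)$ as the product of the action of the element $\epsilon := \diag(-1,-1) \in T_{w,m}$ and the scalar $(-1)^{d_0-j}q^{d_0-j}$. Since $\epsilon$ acts by $\chi(-1)$ on any $\chi$-isotypic subspace, the stated formula follows immediately.

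First, I reduce the claim to the analogous statement on the connected component $Y_{v,0}^m$. By Lemma \ref{lm:IIm-action_factors_over_a_Twm_action} the $I_{m,w_m}/I^m$-action on $\coh_c^i(Y_v^m)$ factors through $T_{w,m}$, and because $T_{w,m}$ is defined over $k$ this action commutes with $\Frob_{q^2}$. Proposition \ref{prop:structure_of_Yvm_bla}(ii) identifies $\pi_0(Y_v^m) = (k[t]/t^{m+1})^\ast$, so the connected components are $k$-rational and permuted transitively by $T_{w,m}$ with stabilizer $T_{w,m,0}$. Hence $\coh_c^i(Y_v^m) \cong \Ind_{T_{w,m,0}}^{T_{w,m}} \coh_c^i(Y_{v,0}^m)$ as $(T_{w,m} \times \langle \Frob_{q^2} \rangle)$-modules, and the $\chi$-isotypic part of $\coh_c^i(Y_v^m)$ is Frobenius-equivariantly isomorphic to the $\chi|_{T_{w,m,0}}$-isotypic part of $\coh_c^i(Y_{v,0}^m)$. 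Since $\epsilon \in T_{w,m,0}$ and $\chi|_{T_{w,m,0}}(\epsilon) = \chi(-1)$, it suffices to carry out the computation on $Y_{v,0}^m$.

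The central step is to show that $\epsilon$ acts on $\coh_c^{d_0-j}(Y_{v,0}^m)$ by the same permutation of $N_-$-blocks as $\Frob_{q^2}$ and trivially within each block. Using \eqref{eq:explicit_Cvm_param_GL_2} and the right-multiplication rule a direct computation shows that $\epsilon$ sends the point $\psi_v^m(a,C,D,A,0)$ to $\psi_v^m(a,-C,-D,A,0)$; writing $C = c_0(1 + \sum_{i \geq 1} c_i t^i)$, this is the map $c_0 \mapsto -c_0$ with $a_i,\, c_i$ ($i\geq 1$),\, $A_i$ fixed. I then trace this action through the geometric reductions in the proof of Theorem \ref{thm:coh_of_Yvm}: the coordinate substitutions $a_m^{\prime} = f(a_0)^{-\frac{1}{q}}a_m - c_m$ and $\alpha_m = a_m^{\prime} - \sum_i c_i^q c_{m-i}$ only involve $a_0$ and the $c_i$ with $i \geq 1$, so they are $\epsilon$-equivariant. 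Consequently $\epsilon$ acts on $Z^m \cong \coprod_{N_- \times k_-(\alpha_1)} Z_0^m$ by permuting the $N_-$-factor via $(a_0,c_0) \mapsto (a_0,-c_0)$, fixing $k_-(\alpha_1)$, and acting trivially on each copy of $Z_0^m$. The same compatibility propagates through the open--closed sequence \eqref{eq:MV_seq_ZYYZ}, through the affine fibrations of Lemma \ref{lm:coh_of_YsmZ}, and through the inductive analysis of $\tilde{Z}^{m-1}$; I then obtain the required description of the $\epsilon$-action on $\coh_c^{d_0-j}(Y_{v,0}^m)$ for each $1 \leq j \leq m$.

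Comparing with the Frobenius action recorded in Theorem \ref{thm:coh_of_Yvm}, the operators $\Frob_{q^2}$ and $\epsilon$ induce the same permutation on the $N_-$-blocks while $\Frob_{q^2}$ additionally scales each block by $(-1)^{d_0-j}q^{d_0-j}$. Because $\epsilon$ commutes with $\Frob_{q^2}$ and $\epsilon^2 = 1$, this gives the operator identity $\Frob_{q^2} = \epsilon \cdot (-1)^{d_0-j}q^{d_0-j}$ on $\coh_c^{d_0-j}(Y_{v,0}^m)$; specializing to the $\chi|_{T_{w,m,0}}$-isotypic subspace, where $\epsilon$ acts by $\chi(-1)$, and transporting back along the induction isomorphism from the second paragraph, completes the proof. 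The hard part is the bookkeeping in the third paragraph: one must verify, step by step through the proof of Theorem \ref{thm:coh_of_Yvm}, that $\epsilon$ acts trivially on each factor $Z_0^m$ and on the affine fibers produced by Lemma \ref{lm:coh_of_YsmZ}, rather than nontrivially through some less transparent identification.
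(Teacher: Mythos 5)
Your proof is correct and takes essentially the same route as the paper: both observe that $-1 \in T_{w,m}$ acts on the $N_-$-factor in the same way as $\Frob_{q^2}$ (via $(a_0,c_0) \mapsto (a_0,-c_0)$) and trivially on the remaining cohomology, so that $\Frob_{q^2}$ equals the action of $-1$ times the scalar $(-1)^{d_0-j}q^{d_0-j}$. The only differences are presentational — you make the reduction to $Y_{v,0}^m$ via an induced-module identification and phrase the conclusion as an operator identity, while the paper compares eigenspaces of $-1$ and $\Frob_{q^2}$ directly.
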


\begin{proof}
We have $\coh_c^i(Y_v^m) = \bigoplus_{(k[t]/t^{m+1})^{\ast}} \coh_c^i(Y_{v,0}^m)$ and $\Frob_{q^2}$ acts trivially on the index set of the direct sum, so it is enough to study its action on $\coh_c^i(Y_{v,0}^m)$. With notations as in the proof of Theorem \ref{thm:coh_of_Yvm}, we have for $1 < j \leq m$:

\[ \coh_c^{d_0 - j}(Y_{v,0}^m) = \coh_c^{2m + 1 - j}(Y^{m,\prime}_{v,0})(-(n-1)) = \coh_c^{2m + 1 - j}(Z^m)(-(n-1)) = (\bigoplus_{N_-} \overline{\bQ}_{\ell}) \otimes \coh_c^{2m + 1 - j} (Z^m_1)(-(n-1)), \] 

\noindent as $Z^m = N_- \times Z^m_1$, where $N_-$ is seen as a disjoint union of points (for $j = 1$ this remains true if we replace the second equality by an inclusion, cf. \eqref{eq:coh_of_Yvm0_prime}). Now, $\Frob_{q^2}$ acts on $N_-$ by $(a_0,c_0) \mapsto (a_0,-c_0)$ and in $\coh_c^{2m + 1 - j}(Z^m_1)$  by the scalar $(-1)^{2m + 1 - j}q^{2m + 1 - j}$. Note that $-1 \in T_{w,m}$ acts on $N_-$ in the same way as $\Frob_{q^2}$ and trivially in $\coh_c^{2m + 1 -j}(Z^m_1)$. Thus the eigenspaces for $-1$ and $\Frob_{q^2}$ coinside. There are only two such eigenspaces $U_1$ and $U_{-1}$, and $\Frob_{q^2}$ acts in $U_{\pm 1}$ by the scalar $(\pm 1) (-1)^{2m + 1 - j}q^{2m + 1 - j}$. Now let $\overline{\chi}$ be the restriction of $\chi$ to $\mu_2 \subseteq T_{w,m}$. Then 

\[ \coh_c^i(Y_v^m)[\chi] \subseteq \coh_c^i(Y_v^m)[\overline{\chi}] = U_{\overline{\chi}(-1)}, \]

\noindent which proves the corollary. \qedhere
\end{proof}

Let $T_{w,m}^i$ denote the subgroup of $T_{w,m}$ of elements which are congruent $1$ modulo $t^i$. Let $T_{w,m}^{\vee, gen}$ denote the set of all characters of $T_{w,m}$, which are non-trivial on $T_{w,m,0} \cap T_{w,m}^m$. We also need the following purity result.

\begin{cor} \label{cor:purity_of_char_spaces} 
Let $m \geq 1$. Let $d_0 = d_0(n,m)$. The finite \'etale morphism $Y_v^m \rar Y_v^{m-1} \times \bA^1(A_{m-1})$ induces an isomorphism 

\[ \coh_c^i(Y_{v,0}^m) \cong \coh_c^i(Y_{v,0}^{m-1} \times \bA^1(A_{m-1})) \cong \coh_c^{i-2}(Y_{v,0}^{m-1})(-1) \] 

\noindent for all $i \neq d_0 - m$. If $\chi \in T_{w,m}^{\vee,{\rm gen}}$, then 

\[ \coh^i_c(Y_v^m)[\chi] = 0 \quad \text{ for all $i \neq d_0 - m$.} \]

\noindent Conversely, if $\chi \in T_{w,m}^{\vee} \sm T_{w,m}^{\vee,{\rm gen}}$, then $\coh^{d_0 - m}_c(Y_v^m)[\chi] = 0$.
\end{cor}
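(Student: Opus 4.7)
The plan is to leverage the étale cover structure of $Y_{v,0}^m \to Y_{v,0}^{m-1} \times \bA^1(A_{m-1})$ together with the explicit cohomology computed in Theorem \ref{thm:coh_of_Yvm}. From \eqref{eq:defining_Ym_over_Ym-1} this map is Artin--Schreier with Galois group $k_- \cong T_{w,m,0} \cap T_{w,m}^m$, in particular finite \'etale. For the first isomorphism in the corollary, I would combine the K\"unneth formula ($\coh_c^i(Y_{v,0}^{m-1} \times \bA^1(A_{m-1})) \cong \coh_c^{i-2}(Y_{v,0}^{m-1})(-1)$) with the \'etale pullback $f^{\ast} \colon \coh_c^i(Y_{v,0}^{m-1} \times \bA^1) \to \coh_c^i(Y_{v,0}^m)$. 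Pullback along a finite \'etale map is injective, as $f_! \circ f^{\ast} = \deg(f) \cdot \id$ for $\overline{\bQ}_\ell$-coefficients. To upgrade injectivity to an isomorphism for $i \neq d_0 - m$, I would compare dimensions and Frobenius eigenvalues term by term using Theorem \ref{thm:coh_of_Yvm} applied at levels $m$ and $m-1$ (noting $d_0(n,m-1) = d_0 - 2$); the numerics match in the degrees $i = d_0 + 1$, $d_0$, and $d_0 - j$ with $1 \leq j \leq m-1$. The excluded degree $i = d_0 - m$ is exactly where the right-hand side vanishes, because $d_0 - m - 2$ lies strictly below the lowest nonzero degree $d_0(n,m-1) - (m-1) = d_0 - m - 1$ of $\coh_c^{\ast}(Y_{v,0}^{m-1})$, while the left-hand side does not.

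For the second assertion, the image of $f^{\ast}$ sits inside the $(T_{w,m,0} \cap T_{w,m}^m)$-invariants of $\coh_c^i(Y_{v,0}^m)$; the isomorphism from the previous step forces this subgroup to act trivially on $\coh_c^i(Y_{v,0}^m)$, and hence on $\coh_c^i(Y_v^m)$ (since it preserves each connected component), for all $i \neq d_0 - m$. Any $\chi \in T_{w,m}^{\vee,{\rm gen}}$ is by definition non-trivial on $T_{w,m,0} \cap T_{w,m}^m$, so its isotypic component vanishes in these degrees.

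The converse is subtler. I would start from the decomposition of $Y_v^m$ into connected components indexed by $\pi_0(Y_v^m) = (k[t]/t^{m+1})^{\ast}$, each component isomorphic to $Y_{v,0}^m$, observing that $T_{w,m}$ permutes these components transitively via the norm $T_{w,m} \twoheadrightarrow T_{w,m}/T_{w,m,0} \cong (k[t]/t^{m+1})^{\ast}$. This yields an isomorphism of $T_{w,m}$-modules $\coh_c^{\ast}(Y_v^m) \cong \Ind_{T_{w,m,0}}^{T_{w,m}} \coh_c^{\ast}(Y_{v,0}^m)$, so by Frobenius reciprocity $\coh_c^{d_0-m}(Y_v^m)[\chi] \cong \coh_c^{d_0-m}(Y_{v,0}^m)[\chi|_{T_{w,m,0}}]$ for every character $\chi$ of $T_{w,m}$. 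If $\chi$ is trivial on $T_{w,m,0} \cap T_{w,m}^m$, then so is $\chi|_{T_{w,m,0}}$, hence the isotypic on the right sits inside the $(T_{w,m,0} \cap T_{w,m}^m)$-invariants of $\coh_c^{d_0-m}(Y_{v,0}^m)$. These invariants coincide by \'etale descent with $\coh_c^{d_0-m}(Y_{v,0}^{m-1} \times \bA^1(A_{m-1})) \cong \coh_c^{d_0-m-2}(Y_{v,0}^{m-1})(-1)$, which vanishes by the same degree argument as above. The main subtle point lies exactly here: because the hypothesis on $\chi$ is strictly weaker than ``$\chi$ trivial on all of $T_{w,m}^m$'', one cannot directly invoke the second assertion, and the Frobenius-reciprocity reduction to the smaller group $T_{w,m,0}$ acting on $Y_{v,0}^m$ is indispensable.
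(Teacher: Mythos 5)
Your proposal is correct, and parts one and two track the paper's proof closely: the paper likewise deduces the first isomorphism by a dimension comparison from Theorem \ref{thm:coh_of_Yvm} (your extra observation that $f^{\ast}$ is injective via $f_!f^{\ast} = \deg(f)\cdot\mathrm{id}$ cleanly explains why matching dimensions suffices), and the second statement follows exactly as you say once $T_{w,m,0}\cap T_{w,m}^m$ is seen to act trivially in degrees $i \neq d_0 - m$.

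For the converse, your route via Frobenius reciprocity on the induced module $\coh_c^{\ast}(Y_v^m) \cong \Ind_{T_{w,m,0}}^{T_{w,m}}\coh_c^{\ast}(Y_{v,0}^m)$ is valid, but it is not the route the paper takes, and your final remark that this reduction is ``indispensable'' overstates the case. The paper instead factors the covering of $Y_v^m$ itself: writing $N = \ker\bigl((k[t]/t^{m+1})^{\ast} \twoheadrightarrow (k[t]/t^m)^{\ast}\bigr)$, the Galois cover $Y_v^m \rar Y_v^{m-1}\times\bA^1(A_{m-1})$ (with group $T_{w,m}^m$) factors as $Y_v^m \rar \coprod_N Y_v^{m-1}\times\bA^1(A_{m-1}) \rar Y_v^{m-1}\times\bA^1(A_{m-1})$, and the first arrow is Galois with group precisely $T_{w,m,0}\cap T_{w,m}^m$. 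For $\chi$ trivial on $T_{w,m,0}\cap T_{w,m}^m$ one then has directly $\coh_c^{d_0-m}(Y_v^m)[\chi] \subseteq \coh_c^{d_0-m}(Y_v^m)^{T_{w,m,0}\cap T_{w,m}^m} = \coh_c^{d_0-m}\bigl(\coprod_N Y_v^{m-1}\times\bA^1\bigr) = \bigoplus_N \coh_c^{d_0(n,m-1)-m}(Y_v^{m-1}) = 0$, with no need to pass through the component group or Frobenius reciprocity. Both arguments rest on the same geometric fact (the Artin--Schreier cover has Galois group $T_{w,m,0}\cap T_{w,m}^m$) and the same degree count; the paper's version stays at the level of $Y_v^m$ while yours reduces to $Y_{v,0}^m$, which is a matter of taste. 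The only substantive correction is to the word ``indispensable.''
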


\begin{proof} The first statement follows directly from Theorem \ref{thm:coh_of_Yvm} by comparing dimensions. Let $N = \ker((k[t]/t^{m+1})^{\ast} \tar (k[t]/t^m)^{\ast})$. The finite \'etale covering $Y_v^m \rar Y_v^{m-1} \times \bA^1(A_{m-1})$ factors as $Y_v^m \rar \coprod_N Y_v^{m-1} \times \bA^1(A_{m-1}) \rar Y_v^{m-1} \times \bA^1(A_{m-1})$, where the first morphism has Galois group $T_{w,m,0} \cap T_{w,m}^m$. The first statement of the corollary implies that the first morphism in this factorization induces an isomorphism in the cohomology for all $i \neq d_0 - m$. The second statement of the corollary follows from it. If $\chi$ is trivial on $T_{w,m,0} \cap T_{w,m}^m$, then 

\[ \coh^{d_0 - m}_c(Y_v^m)[\chi] \subseteq \coh^{d_0 - m}_c(\coprod_N Y_v^{m-1} \times \bA^1(A_{m-1})) = \bigoplus_N \coh_c^{d_0 - m - 2}(Y_v^{m-1}) = \bigoplus_N \coh_c^{d_0(n,m-1) - m}(Y_v^{m-1}) \] 

\noindent and the last group is $0$ by Theorem \ref{thm:coh_of_Yvm}. Hence the third statement of the corollary.
\end{proof}

\subsection{Superbasic case} \label{sec:superbasic_case}

Before going on, we make a digression and study the varieties $X_{x_m}^m(b)$ in the superbasic case $b = \matzz{}{1}{t}{}$. Let $\dot{x} = \matzz{}{t^{-2k}}{t^{2k+1}}{}$ and let $x$ resp. $x_m$ be the image of $\dot{x}$ in $\tilde{W}$ resp. in $D_{G,m}(x)$. Let $\dot{v}$, $v$ be as in \eqref{eq:dot_w_and_v}. The group $J_b(F)$ is the group of units $D^{\ast}$ of the quaternion algebra $D$ over $F$. If $\caO_D$ are the integers of $D$, then $U_D = \caO_D^{\ast}$ is a maximal compact subgroup of $D^{\ast}$ and $D^{\ast}/U_D \cong \bZ$. Then \cite{Iv} Theorem 3.3(i) shows 

\[ X_x(b) = \coprod_{D^{\ast}/U_D} C_v. \]

\noindent The same arguments as used in the proof of Theorem \ref{thm:structure_of_level_m_coverings_b=1}, show that for $m < 2k$ one has:

\[ X_{w_m}^m(b) = \coprod_{D^{\ast}/U_D} Y_v^m(b), \]

where $Y_v^m(b) \subseteq C_v^m$ is the closed subscheme given by equations

\begin{eqnarray*} 
D^{-1}\sigma(C)(1 - ta\sigma(a))^{-1} &=& 1 \\
 C^{-1} \sigma(D) (1 - ta\sigma(a)) &=& 1 \\
B &=& 0.
\end{eqnarray*}

\noindent Again after eliminating $D$, it is defined in the coordinates $a,C,A$ by 

\begin{equation}\label{eq:Xxb_superbasic}
C \sigma(1 - ta\sigma(a)) = (1 - ta\sigma(a))\sigma^2(C). 
\end{equation}

An explicit comparison with results of Boyarchenko \cite{Bo}, who carried out the closely related construction of Lusztig for a division algebra over $F$ of invariant $\frac{1}{n}$ (for levels $m = 1,2$, with a suggestion of how one can continue for higher levels) and Chen \cite{Ch} (who then extended Boyarchenko's results to all levels for the quaternion algebra) shows that the varieties $X_h$ defined in the quoted papers are very similar to varieties $X_{w_m}^m(b)$ defined by \eqref{eq:Xxb_superbasic}, but do not coincide completely, at least due to the presence of the 'level zero equation' $c_0^{q^2-1} = 1$ in our approach. Also note that level $h \geq 2$ in the quoted papers correspond to level $m = h - 1 \geq 1$ in the present article.


\section{Representation theory of $GL_2(F)$} \label{sec:Rep_theory_of_GL_2}

We continue to assume $G = GL_2$ throughout this section and keep the notations from Section \ref{sec:generalities_GL_2} and the beginning of Section \ref{sec:the_structure_of_Xm_wm1}. Let us collect some further important notation here. We try to keep it consistent with the notation in \cite{BH}. The only major difference is that we write $K$ (and not $U = U_{\fM}$) for the maximal compact subgroup $G(\caO_F)$ of $G(F)$. For $\lambda \in X_{\ast}(T)$ we write $t^{\lambda} \in T(L)$ for the image of the uniformizer $t$ under $\lambda$. For an element $x \in \bar{k}[t]/t^{m+1}$, we mean by its $t$-adic valuation $v_t(x)$ the least integer $\mu \geq 0$, such that $x \in t^{\mu} \cdot \bar{k}[t]/t^{m+1}$. Moreover:

\begin{itemize}
\item[$\bullet$] $Z$ is the center of $G(F)$
\item[$\bullet$] $E = k_2((t)) \subset L$ is the unramified degree two extension of $F$ 
\item[$\bullet$] $U_M$ (resp. $U_M^m$ for $m \geq 1$) denote the units (resp. the $m$-units) of a local field $M$
\item[$\bullet$] $\fM = M_{22}(\caO_F)$ is an $\caO_F$-algebra
\item[$\bullet$] $K = G(\caO_F) = \fM^{\ast}$ is a maximal compact subgroup of $G(F)$
\item[$\bullet$] $K^i = 1 + t^i \fM$ for $i \geq 0$, and $(K^i)_{i=0}^{\infty}$ defines a descending filtration of $K$ by open normal subgroups
\item[$\bullet$] $K_m = K/K^{m+1} \cong G(\caO_F/t^{m+1})$
\item[$\bullet$] $K_m^i = K^i/K^{m+1} = 1 + t^i \fM / 1 + t^{m+1} \fM$ define a filtration on $K_m$
\item[$\bullet$] $T_{w,m}, T_{w,m,0}$ are as in the beginning of Section \ref{sec:structure_of_Yvm} and $T_{w,m}^i, T_{w,m}^{\vee,gen}$ as in the paragraph preceding Corollary \ref{cor:purity_of_char_spaces}
\end{itemize}

For a \emph{locally} compact abelian group $A$, a $\overline{\bQ}_{\ell}$-vector space $W$ with a right $A$-action and a $\overline{\bQ}_{\ell}^{\ast}$-valued character $\chi$ of $A$, we let $W[\chi]$ be the maximal \emph{quotient} of $W$, on which $A$ acts by $\chi$ (if $A$ is compact, $W$ finite dimensional, $W[\chi]$ is canonically isomorphic to the maximal $\chi$-isotypical subspace of $W$). A left $G(F)$-action on $W$, which commutes with the $A$-action, induces a left $G(F)$-action on $W[\chi]$.


\subsection{Definitions and results} \label{sec:def_of_reps}

Let $\iota_E \colon E \har M_{22}(L)$ be the embedding of $F$-algebras given by $e \mapsto {\rm diag}(e,\sigma(e))$. We have $\iota_E(U_E)/\iota_E(U_E^m) = T_{w,m}$. Inflating the $T_{w,m}$-action to $\iota_E(U_E)$ and pulling back via $\iota_E$, we obtain an $U_E$-action on $X_{w_m}^m(1)$. The center $Z$ of $G(F)$ is $F^{\ast}$, thus (as in the last lines of Section \ref{sec:def_of_cov_of_ADLV}), the action of $U_E$ on $X_{w_m}^m(1)$ extends to an action of $E^{\ast} = F^{\ast}U_E$, which commutes with the left $G(F)$-action. 

Let $\chi$ be a non-trivial character of $E^{\ast}$. The \emph{level} $\ell(\chi)$ of $\chi$ is the least integer $m \geq 0$, such that $\chi|_{U_E^{m+1}}$ is trivial. Moreover, the pair $(E/F, \chi)$ is said to be \emph{admissible} (\cite{BH} 18.2) if $\chi$ do not factor through the norm $\N_{E/F} \colon E^{\ast} \rar F^{\ast}$. Two pairs $(E/F, \chi), (E/F, \chi^{\prime})$ are $F$-isomorphic, if there is some $\gamma \in \Gal_{E/F}$ such that $\chi^{\prime} = \chi \circ \gamma$. Let $\bP_2^{\nr}(F)$ denote the set of all isomorphism classes of admissible pairs over $F$ attached to $E/F$. An admissible pair $(E/F, \chi)$ is called \emph{minimal} if $\chi|_{U_E^m}$ do not factor through $\N_{E/F}$, where $m$ is the level of $\chi$. 

\begin{Def}
Let $\chi$ be a character of $E^{\ast}$, such that $(E/F,\chi)$ is admissible. The \emph{essential level} $\ell_{\rm ess}(\chi)$ of $\chi$ is the smallest integer $m^{\prime} \geq 0$, such that there is a character $\phi$ of $F^{\ast}$ with $\ell(\phi_E \chi) = m^{\prime}$, where $\phi_E = \phi \circ \N_{E/F}$.
\end{Def}

Clearly, $\ell_{\rm ess}(\chi) \leq \ell(\chi)$. Moreover, an admissible pair $(E/F,\chi)$ is minimal if and only if $\ell(\chi) = \ell_{\rm ess}(\chi)$. Using the geometric constructions from last sections, we associate to any admissible pair $(E/F,\chi)$ a $G(F)$-representation. 

\begin{Def}\label{Def:R_chi}
Let $(E/F, \chi)$ be admissible. Let $\ell(\chi) = m \geq \ell_{\rm ess}(\chi) = m^{\prime}$. We take $n > 0$ even such that $0 \leq m < n$ and let $d_0(n,m)$ be as in Theorem \ref{thm:coh_of_Yvm}. Define $R_{\chi}$ to be the $G(F)$-representation
\[ R_{\chi} = \coh_c^{d_0(n,m) - m^{\prime}}(X_{w_m}^m(1), \overline{\bQ}_{\ell})[\chi]. \]
\end{Def}

One easily sees that this definition is independent of the choice of $n$ (cf. Theorem \ref{thm:structure_of_level_m_coverings_b=1} and the definition of $Y_v^m$). To state our main result, we need some terminology from \cite{BH}, which we will freely use here. In particular, the \emph{level} $\ell(\pi) \in \frac{1}{2}\bZ$ of an irreducible $G(F)$-representation is defined in \cite{BH} 12.6. Moreover, in \cite{BH} 20.1, 20.3 Lemma it is explained when an irreducible cuspidal representation $\pi$ of $G(F)$ is called \emph{unramified}. We denote by $\cA_2^{\nr}(F)$ the set of all isomorphism classes of irreducible cuspidal unramfied representations of $G(F)$. This is a subset of the set $\cA_2^0(F)$ of the isomorphism classes of all irreducible cuspidal representations of $G(F)$ (\cite{BH}, \S20). The (unramified part of the) tame parametrization theorem (\cite{BH} 20.2 Theorem) states the existence of a certain bijection (also for even $q$): 

\begin{equation}\label{eq:tameparamthm_BH}
\bP_2^{\nr}(F) \stackrel{\sim}{\longrar} \cA_2^{\nr}(F), \quad (E/F, \chi) \mapsto \pi_{\chi}
\end{equation}

\noindent where $\pi_{\chi}$ is a certain $G(F)$-representation constructed in \cite{BH} \S19. Below, in Section \ref{sec:second_part_of_the_proof} we briefly recall this construction. Here is our main result.

\begin{thm}\label{thm:main_thm} 
Let $(E/F,\chi)$ be an admissible pair. The representation $R_{\chi}$ is irreducible cuspidal, unramified, has level $\ell(\chi)$ and central character $\chi|_{F^{\ast}}$. 
Moreover, $R_{\chi}$ is isomorphic to $\pi_{\chi}$, i.e., the map

\begin{equation} \label{eq:R_realization}
R \colon \bP_2^{\nr}(F) \rar \cA_2^{\nr}(F), \quad (E/F, \chi) \mapsto R_{\chi} 
\end{equation}

\noindent is a bijection and coincides with the map from the tame parametrization theorem \eqref{eq:tameparamthm_BH}.
\end{thm}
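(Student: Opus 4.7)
The plan is to exhibit $R_\chi$ as a compact induction from $E^*K$ and then match the inducing representation with the cuspidal type produced by the Bushnell--Henniart machinery. By Theorem \ref{thm:structure_of_level_m_coverings_b=1} the scheme $X^m_{w_m}(1)$ decomposes $G(F)$-equivariantly as $\coprod_{G(F)/K} Y^m_v$, so the left $G(F)$-action is transitive on components with stabiliser $K$. The central subgroup $Z = F^*$ acts via the diagonal embedding $\iota_E$ into $E^*$, and since $E^* = F^* \cdot U_E$ with $U_E \hookrightarrow K$, the right $E^*$-action on $R_\chi$ interacts with the $G(F)$-action exactly as required to produce a natural isomorphism
$$ R_\chi \;\cong\; \cIndd_{E^* K}^{G(F)} V_\chi, $$
where $V_\chi$ is a smooth finite-dimensional representation of the open subgroup $E^* K \subset G(F)$ built from the $\chi$-isotypic part of $\coh_c^{d_0 - \ell_{\rm ess}(\chi)}(Y^m_v,\overline{\bQ}_\ell)$, with $F^*$ acting by the scalar $\chi|_{F^*}$. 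In the unramified setting the Bushnell--Henniart construction presents $\pi_\chi = \cIndd_{E^* K}^{G(F)} \Lambda_\chi$ with an explicit $\Lambda_\chi$, so the theorem reduces to the isomorphism $V_\chi \cong \Lambda_\chi$ as $E^*K$-representations together with the irreducibility of $V_\chi$.

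For minimal pairs ($\ell(\chi) = \ell_{\rm ess}(\chi) = m$), Corollary \ref{cor:purity_of_char_spaces} localises $V_\chi$ in the single degree $d_0 - m$, and Theorem \ref{thm:coh_of_Yvm} pins down its dimension; this number should match $\dim \Lambda_\chi$. The central character equals $\chi|_{F^*}$ by construction. To identify $V_\chi$ with $\Lambda_\chi$, I would compare characters: Boyarchenko's trace formula (Lemma \ref{lm:Boyarchenko_trace_formula}) rewrites the trace of a regular element of $\iota_E(E^*)$ on $V_\chi$ as a weighted count of rational points on a subvariety of $Y^m_v$, and the maximality of $Z^m_1$ (Lemma \ref{lm:Zm1_maximality}) makes this count explicit. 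Sections \ref{sec:trace_I_preliminaries}--\ref{sec:trace_III_Hm} carry out the resulting character calculations, while Sections \ref{sec:first_part_of_the_proof}--\ref{sec:second_part_of_the_proof} align them with the explicit description of $\Lambda_\chi$. The non-minimal case reduces to the minimal one by twisting: for $\phi$ a character of $F^*$ with $\ell(\phi_E \chi) = \ell_{\rm ess}(\chi)$, one verifies $R_\chi \cong R_{\phi_E \chi} \otimes (\phi \circ \det)$, using the equivariance of the determinant map from Section \ref{sec:generalities_GL_2} under the $E^*$-action.

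Once $V_\chi \cong \Lambda_\chi$ is proved, the isomorphism $R_\chi \cong \pi_\chi$ follows formally, and irreducibility, cuspidality, unramifiedness, the level $\ell(\chi)$ and the central character $\chi|_{F^*}$ are inherited from the corresponding properties of $\pi_\chi$ established in \cite{BH} \S 19. Bijectivity of $R$ is then a direct consequence of the tame parametrisation theorem \eqref{eq:tameparamthm_BH}. The main obstacle will be the character comparison on elements of $E^*K$ which are not regular in $\iota_E(E^*)$: while Boyarchenko's formula together with the maximality of $Z^m_1$ handles the elliptic regular locus cleanly, matching the traces of $V_\chi$ and $\Lambda_\chi$ on a general element of a filtration subgroup $K^i \subset K$ (where the strata intertwine non-trivially) requires a delicate fixed-point analysis on the covering $Y^m_v \to D_v \times \bA^m$ and an understanding of the Heisenberg-type structure underlying the BH cuspidal type. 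Pushing this through seems to force the improvement of \cite[15.1]{BH} announced in the abstract: one uses the sharper intertwining statement to deduce from matching characters on a sufficiently rich subgroup of $E^*K$ that $V_\chi$ and $\Lambda_\chi$ coincide.
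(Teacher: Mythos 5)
Your outline correctly identifies the decomposition $R_\chi \cong \cIndd_{ZK}^{G(F)}\Xi_\chi$ coming from Theorem \ref{thm:structure_of_level_m_coverings_b=1}, the role of Boyarchenko's trace formula and the maximality of $Z^m_1$, and the reduction of the non-minimal case via twisting. But the central step — identifying $R_\chi$ with $\pi_\chi$ — is set up in a way that does not match the paper and, as you yourself flag, leaves the hardest part unresolved. You propose to prove an isomorphism of inducing representations $V_\chi\cong\Lambda_\chi$ directly by character comparison, and then \emph{inherit} irreducibility, cuspidality, unramifiedness and the level of $R_\chi$ from the known properties of $\pi_\chi$. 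The paper runs the logic in the opposite order: it first proves these properties of $R_\chi$ \emph{independently} (Corollary \ref{cor:V_chi_irreducible}, Proposition \ref{prop:unram_stratum_is_contained}, Corollary \ref{cor:minimality_and_cont_an_unram_stratum}) by showing that $\xi_\chi$ is irreducible and contains an \emph{unramified simple stratum} — this is what the $N_m$-trace computations in Section \ref{sec:unipotent_action_and_central_char} feed into, not a character comparison with $\Lambda$. Your proposal omits this stratum analysis entirely; without it you cannot bootstrap the argument, because you would need $V_\chi\cong\Lambda_\chi$ before you know any structural facts about $R_\chi$, and that isomorphism is exactly what you have not proved.

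The genuine gap is the ``main obstacle'' you name at the end: matching traces of $V_\chi$ and $\Lambda$ on general elements of the filtration subgroups $K^i$. The paper never carries out such a comparison. Once it knows $R_\chi$ is irreducible cuspidal unramified of level $m$, it invokes the \emph{surjectivity} of the Bushnell--Henniart bijection \eqref{eq:tameparamthm_BH} to conclude $R_\chi\cong\pi_{\chi'}$ for \emph{some} minimal $\chi'$ of level $m$, hence $\Xi_\chi\cong\Theta_{\chi'}$, and is then reduced to showing $\chi'\in\{\chi,\chi^\sigma\}$. That reduction only requires comparing the two sides on the non-split torus $H_m$, which is exactly what Theorem \ref{thm:H_m-decomposition_of_pi_chi}, Corollary \ref{cor:xi_chi_determines_chi}, and Lemma \ref{lm:cusp_ind_datum_cont_chi_exactly_once} do; the sharpened intertwining result (Proposition \ref{prop:stronger_version_of_intertwining_theorem}) is used precisely inside Lemma \ref{lm:cusp_ind_datum_cont_chi_exactly_once} to control the Mackey sum $\Hom_{E^*}(\chi',\Theta_\chi)$, not to carry out the full fixed-point analysis on $K^i$ that you anticipate needing. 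So your proposal, as written, has a hole at the decisive step and would need either a substantially harder direct comparison or the paper's indirect surjectivity argument to close it.

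A smaller imprecision: the induction is from $ZK$, not from an ``$E^*K$'' built out of $\iota_E$. The right $E^*$-action on cohomology comes from $\iota_E\colon E\hookrightarrow M_{22}(L)$ and lives on the \emph{right}, whereas $ZK\subset G(F)$ acts on the left; the two are reconciled only because $Z=F^*$ sits in both. The BH inducing subgroup is the much smaller $J_\alpha=E^*K^{\lfloor(m+1)/2\rfloor}$, and $\pi_\chi=\cIndd_{J_\alpha}^{G(F)}\Lambda=\cIndd_{ZK}^{G(F)}\Theta_\chi$; identifying the two cuspidal inducing data on $ZK$ (not on a putative $E^*K$) is the correct target.
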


The theorem will be proven at the end of Section \ref{sec:second_part_of_the_proof}, after the necessary preparations in Sections \ref{sec:trace_I_preliminaries}-\ref{sec:second_part_of_the_proof} are done. We wish to point out here, that the injectivity of \eqref{eq:R_realization} follows from results of Section \ref{sec:trace_III_Hm} and do not use the theory developed in \cite{BH}, whereas to prove surjectivity of \eqref{eq:R_realization}, we use the full machinery of \cite{BH}. 

In the rest of this section, we only deal with the central character, reduce to the minimal case and introduce some further notation. For a character $\phi$ of $F^{\ast}$ and a representation $\pi$ of $G(F)$, we write $\phi \pi$ for the $G(F)$-representation given by $g \mapsto \phi(\det(g)) \pi(g)$ and we let $\phi_E = \phi \circ N_{E/F}$ be the corresponding character of $E^{\ast}$. If $\phi$ is a character of $F^{\ast}$ and $(E/F,\chi)$ an admissible pair, then \eqref{eq:tameparamthm_BH} satisfies: the central character of $\pi_{\chi}$ is $\chi|_{F^{\ast}}$ and $\phi\pi_{\chi} = \pi_{\chi\phi_E}$. We have an analogous statement for $R_{\chi}$.

\begin{lm}\label{lm:red_to_adm_pairs} Let $(E/F,\chi)$ be admissible. Then the central character of $R_{\chi}$ is $\chi|_{F^{\ast}}$. If $\phi$ is a character of $F^{\ast}$, then $\phi R_{\chi} = R_{\chi\phi_E}$.
\end{lm}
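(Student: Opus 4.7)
The first statement on the central character is a direct consequence of the construction. The central subgroup $Z=F^{\ast}$ of $G(F)$ acts on $X^m_{w_m}(1)$ through left multiplication by scalar matrices $zI$; by centrality of $zI$ in $G(L)$, this agrees with right multiplication by $\iota_E(z)=zI$, which is the $F^{\ast}\subset E^{\ast}$-part of the right $E^{\ast}$-action. On the quotient $R_{\chi}$ the right $E^{\ast}$-action is by $\chi$ by definition, hence the left $Z(F)$-action on $R_{\chi}$ is by $\chi|_{F^{\ast}}$, which is precisely the central character.

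For the twist identity $\phi R_{\chi}=R_{\chi\phi_E}$, I first note that $\ell_{\rm ess}$ is invariant under twisting by $\phi_E$: a character $\phi_0$ of $F^{\ast}$ realizes $\ell(\chi\phi_{0,E})=m'$ iff $\phi_0\phi^{-1}$ realizes $\ell((\chi\phi_E)(\phi_0\phi^{-1})_E)=m'$, so $\ell_{\rm ess}(\chi\phi_E)=\ell_{\rm ess}(\chi)=:m'$. Pick $N\geq\max\{\ell(\chi),\ell(\chi\phi_E)\}$. By iterating Corollary \ref{cor:purity_of_char_spaces} (combined with the decomposition in Theorem \ref{thm:structure_of_level_m_coverings_b=1}), one identifies both $R_{\chi}$ and $R_{\chi\phi_E}$, up to Tate twists which are invisible on the $G(F)$-structure, with the $\chi$- and $\chi\phi_E$-components of the common cohomology group $V:=\coh_c^{d_0(n,N)-m'}(X^N_{w_N}(1))$. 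Thus the claim reduces to the comparison on $V$:
\[
V[\chi]\otimes(\phi\circ\det)\;\cong\;V[\chi\phi_E]
\]
as $G(F)$-representations.

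By Theorem \ref{thm:structure_of_level_m_coverings_b=1}, $V$ is obtained from the fiber $\coh_c^{d_0(n,N)-m'}(Y^N_v)$ by compact induction from $Z(F)K$ to $G(F)$ (with compatible central characters fixed by the previous paragraph). Since the right $E^{\ast}$-action is fibrewise and commutes with the left $G(F)$-action, the formation of the $\chi$-component commutes with this induction, so the identity on $V$ reduces to the analogous fibre-level identity
\[
\coh_c^i(Y^N_v)[\chi]\otimes(\phi\circ\det|_{Z(F)K})\;\cong\;\coh_c^i(Y^N_v)[\chi\phi_E]
\]
of $Z(F)K$-representations. This fibre identity rests on the determinant-norm compatibility $\det\iota_E(u)=\N_{E/F}(u)$ for $u\in U_E$: by Proposition \ref{prop:structure_of_Yvm_bla}(ii), $\pi_0(Y^N_v)=(k[t]/t^{N+1})^{\ast}=U_F/U_F^{N+1}$, and both the left $K$-action (via $\det\colon K\to U_F/U_F^{N+1}$) and the right $U_E$-action (via $\N_{E/F}\colon U_E\to U_F$) permute these components through the same quotient. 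Consequently $\phi\circ\det|_K$ and $\phi_E=\phi\circ\N_{E/F}$ are pull-backs of the same character of $(k[t]/t^{N+1})^{\ast}$, which allows one to transport the twist from the left side to the right side and match the $\chi$-component (with $\phi\circ\det$-twisted $G(F)$-action) to the $\chi\phi_E$-component.

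The principal obstacle will be this fibre-level identity: rigorously matching the left $\phi\circ\det$-twist with the right $\phi_E$-twist through the component decomposition of $Y^N_v$, and checking that the residual action on individual components $Y^N_{v,0}$ is compatible after restricting both characters to the common quotient $(k[t]/t^{N+1})^{\ast}$. The reduction to a common covering via Corollary \ref{cor:purity_of_char_spaces} is the secondary technical point, requiring iterated application and a careful check that the Tate twists do not interfere with the $G(F)$-equivariance.
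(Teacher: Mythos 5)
Your proof is correct and takes essentially the same route as the paper: the paper's own proof is a terse one-liner invoking the same iterated purity isomorphism from Corollary~\ref{cor:purity_of_char_spaces} together with the observation that $\chi$ and $\chi\phi_E$ agree on $\ker(\N_{E/F})$, which is exactly the stabilizer of a connected component of $Y_v^N$ that you exploit via the $\pi_0$/$\det$--$\N_{E/F}$ picture. One small imprecision to fix: $Y_v^N$ carries only a $K$-action (the uniformizer in $Z(F)$ translates between distinct $Z$-translates of $Y_v^N$), so your fibre-level identity should be stated either on $\tilde Y_v^N$ as a $Z(F)K$-identity or on $Y_v^N$ as a $K$-identity, with the extension over $Z(F)$ then supplied by the central-character computation you already made.
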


\begin{proof}
The first statement follows from the definition of $R_{\chi}$ as the $\chi$-isotypic component of some cohomology space, and the fact that the actions of $F^{\ast} \cong Z \subseteq G(F)$ and $F^{\ast} \subseteq E^{\ast}$ in this cohomology space coincide as they already coincide on the level of varieties. The second statement follows by unraveling the definition of $R_{\chi}$, using the natural isomorphism $\coh_c^{d_0(n,m) - m^{\prime}}(Y_{v,0}^m) \cong  \coh_c^{d_0(n,\lambda) - m^{\prime}}(Y_{v,0}^{\lambda})$ for $m \geq \lambda \geq m^{\prime}$ from Theorem \ref{thm:coh_of_Yvm} and $\chi|_{\ker(\N_{E/F})} = \phi_E \chi|_{\ker(\N_{E/F})}$.
\end{proof}

We fix some notations for the rest of Section \ref{sec:Rep_theory_of_GL_2}. Let $(E/F,\chi)$ be a minimal pair, let $m$ be the level of $\chi$ and write $i_0 = d_0(n,m) - m$. Let $\tilde{Y}_v^m$ be the (disjoint) union of all $Z$-translates of $Y_v^m$. Note that $Y_v^m$ is fixed by $K$, hence $\tilde{Y}_v^m$ is fixed by $ZK$. Define the $ZK$-representation $\Xi_{\chi}$  by 

\[ \Xi_{\chi} = \coh_c^{i_0}(\tilde{Y}_v^m)[\chi]. \]

\noindent Then Theorem \ref{thm:structure_of_level_m_coverings_b=1} shows

\[ R_{\chi} = \cIndd\nolimits_{ZK}^{G(F)} \Xi_{\chi}. \]

\noindent Moreover, let $\xi_{\chi}$ be the restriction of $\Xi_{\chi}$ to $K$, i.e., $\Xi_{\chi}$ is the unique extension of $\xi_{\chi}$ to $ZK$ such that $t^{(1,1)}$ acts as $\chi(t^{(1,1)})$. Let $V_{\chi}$ denote the space in which $\Xi_{\chi}$ (resp. $\xi_{\chi}$) acts. Note that $\xi_{\chi}$ is inflated from a representation of the finite group $K_m = K/K^{m+1}$, as $K^{m+1}$ acts trivially in the cohomology of $Y_v^m$.

\begin{lm}\label{lm:induction_for_admissibility}
$\xi_{\chi} \cong \coh_c^{i_0}(Y_v^m)[\chi|_{U_E}]$.
\end{lm}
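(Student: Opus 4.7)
The plan is to unfold the definition of $\Xi_\chi$ by first decomposing $\tilde{Y}_v^m$ into its $Z$-orbit components, then carrying out the coinvariants by $E^{\ast} = F^{\ast} U_E$ in two stages: first by $F^{\ast}$ (which collapses the direct sum of orbit translates), then by $U_E$ (which acts on a single copy of $Y_v^m$ and produces the claimed $K$-representation).

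More concretely, I would start by noting that $Z \cap K = U_F = \mathcal{O}_F^{\ast}$, so the $Z$-orbit through $Y_v^m$ inside $X_{w_m}^m(1)$ is naturally indexed by $Z/(Z \cap K) = F^{\ast}/U_F \cong \mathbb{Z}$, via $i \mapsto t^{(i,i)} \cdot Y_v^m$. Thus
\[
\tilde{Y}_v^m \;=\; \coprod_{i \in \mathbb{Z}} t^{(i,i)} \cdot Y_v^m,
\]
and taking compactly supported $\ell$-adic cohomology yields
\[
\coh_c^{i_0}(\tilde{Y}_v^m) \;\cong\; \bigoplus_{i \in \mathbb{Z}} \coh_c^{i_0}(Y_v^m).
\]
Since $t^{(i,i)}$ is central in $G(F)$, multiplication by $t^{(i,i)}$ is an isomorphism $Y_v^m \stackrel{\sim}{\longrightarrow} t^{(i,i)} Y_v^m$ equivariant for both the left $K$-action and the right $U_E$-action (the latter commutes with the center by construction). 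Hence, under the above identification, $K$ and $U_E$ act diagonally on the direct sum, while the central element $t^{(1,1)} \in F^{\ast} \subseteq E^{\ast}$ acts as the shift of summands $i \mapsto i+1$.

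Next I would take $\chi$-coinvariants in two stages. The first stage is to impose the relation coming from the generator $t^{(1,1)}$: in the quotient one has $v_{i+1} = \chi(t^{(1,1)}) v_i$ for $v_i$ lying in the $i$-th summand. This collapses the $\mathbb{Z}$-indexed direct sum onto a single copy (say the $i = 0$ one), producing an isomorphism of $K \times U_E$-representations
\[
\coh_c^{i_0}(\tilde{Y}_v^m)\big/\langle t^{(1,1)}\cdot v - \chi(t^{(1,1)}) v \rangle \;\cong\; \coh_c^{i_0}(Y_v^m).
\]
The second stage is to further take the $\chi|_{U_E}$-coinvariants of the $U_E$-action on this single copy; by the definition of $\xi_\chi$ as the restriction of $\Xi_\chi$ to $K$, this yields exactly $\coh_c^{i_0}(Y_v^m)[\chi|_{U_E}]$ with its natural $K$-action, which is what we want.

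The only genuine point to verify is that the two ways in which $U_F = \mathcal{O}_F^{\ast} = Z \cap K$ acts — via the right $E^{\ast}$-action (through $F^{\ast} \subseteq E^{\ast}$) and via the left $K$-action (via the center) — agree, so that the two-stage coinvariants procedure is well defined and compatible. This is immediate from the construction in Section~\ref{sec:def_of_cov_of_ADLV}: the right $Z(F)$-action on $X^f_{w_f}(b)$ is defined by viewing the (left) central $Z(F) \subseteq J_b(F)$-action as a right action, so these two actions literally coincide on $U_F$. In particular $\chi|_{U_F}$ must agree with the action of $U_F \subseteq U_E$ read via $\chi|_{U_E}$, which is automatic as $\chi$ is a single character of $E^{\ast}$. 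The mild bookkeeping around this compatibility is the only step requiring care; everything else is the two-step collapse described above.
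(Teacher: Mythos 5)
Your proof is correct and follows essentially the same approach as the paper's: the paper packages the $\mathbb{Z}$-indexed direct sum $\coh_c^{i_0}(\tilde{Y}_v^m)\cong\bigoplus_{\mathbb{Z}}\coh_c^{i_0}(Y_v^m)$ abstractly as $W[t]$ and computes $(W[t])[\chi]$ by commuting the two coinvariant operations, while you perform the same two-stage collapse in the reverse order and with the geometric decomposition $\tilde{Y}_v^m=\coprod_{i}t^{(i,i)}Y_v^m$ made explicit. The compatibility point you flag at the end (that the central $U_F$-action and the $U_F\subseteq U_E$-action agree) is precisely the justification the paper invokes when establishing Lemma \ref{lm:red_to_adm_pairs}, so your bookkeeping matches the paper's.
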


\begin{proof}
Let $W$ be a $\overline{\bQ}_{\ell}$-vector space in which $K$ acts on the left, $U_E$ on the right such that these actions commute. Let 

\[ W[t] = \left\{ \sum\limits_{\substack{i \in \bZ \\ -\infty \ll i \ll \infty}} w_i t^i \colon w_i \in W \right\} \] 

\noindent with obvious $K$ and $U_E$-actions. Extend them to $ZK$- resp. $E^{\ast}$-actions by letting $t$ act as $t.(\sum_i w_i t^i) = \sum_i w_{i-1} t^i$. Then one checks that $(W[t])[\chi|_{U_E}] = (W[\chi|_{U_E}])[t]$ and that the composed map $W[\chi|_{U_E}] \har (W[\chi|_{U_E}])[t] \tar (W[t])[\chi]$ is a bijection. Apply this to $W = \coh_c^{i_0}(Y_v^m)$ and $W[t] \cong \cIndd_K^{ZK} W = \coh_c^{i_0}(\tilde{Y}_v^m)$.
\end{proof}


\subsection{Trace computations I: preliminaries} \label{sec:trace_I_preliminaries}

Let $(E/F,\chi)$ be a minimal pair of level $m \geq 0$. Via $\iota_E$, $\chi|_{U_E}$ induces a character of $T_{w,m}$. We denote this character of $T_{w,m}$ also by $\chi$. The context excludes any ambiguity.

\begin{lm}\label{lm:if_chi_minimal_then_chi_gen}
We have $\chi \in T_{w,m}^{\vee,gen}$, i.e., $\chi$ is non-trivial on $T_{w,m,0} \cap T_{w,m}^m$.
\end{lm}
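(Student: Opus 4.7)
The proof should amount to carefully unwinding the identification of $T_{w,m}$ (and its relevant subgroups) with $U_E/U_E^{m+1}$ under $\iota_E$, and then translating the hypothesis of minimality into a non-vanishing condition on a kernel of the norm. I expect essentially no mathematical obstacle: the content is bookkeeping and a standard fact about the norm in an unramified quadratic extension.

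First I would identify $T_{w,m,0} \cap T_{w,m}^m$ inside $\iota_E(U_E)/\iota_E(U_E^{m+1}) = T_{w,m}$. The map induced by $\iota_E$ sends $e \in U_E$ to $\mathrm{diag}(C,\sigma(C))$ with $C \equiv e \pmod{t^{m+1}}$, so the involution of $T_{w,m}$ given by $C \mapsto \sigma(C)$ coincides with the Galois involution, and consequently $T_{w,m,0}$ corresponds to the kernel of the induced norm $\bar{\N} \colon U_E/U_E^{m+1} \twoheadrightarrow U_F/U_F^{m+1}$. The filtration piece $T_{w,m}^m$ corresponds to $U_E^m/U_E^{m+1}$. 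Therefore $T_{w,m,0} \cap T_{w,m}^m$ corresponds to $\ker\bigl(\bar{\N} \colon U_E^m/U_E^{m+1} \to U_F^m/U_F^{m+1}\bigr)$. An explicit check confirms this: for $m \geq 1$, an element is of the form $1 + t^m c$, $c \in k_2$, and $(1+t^m c)(1+t^m c^q) \equiv 1 \pmod{t^{m+1}}$ is equivalent to $c+c^q=0$, i.e.\ $c \in k_-$; for $m=0$ the condition is simply $c^{q+1}=1$.

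Next I would show that the restricted map $\bar{\N}\colon U_E^m/U_E^{m+1} \to U_F^m/U_F^{m+1}$ is \emph{surjective}, which follows because $E/F$ is unramified and hence $\N_{E/F}\colon U_E^m \to U_F^m$ is surjective (or directly, $\bar{\N}$ is the trace map $k_2 \to k$ for $m\geq 1$, and the multiplicative norm $k_2^* \to k^*$ for $m=0$). Surjectivity implies the following key equivalence: the character $\chi|_{U_E^m}$ (which, since $\ell(\chi)=m$, descends to a character $\bar\chi$ of $U_E^m/U_E^{m+1}$) factors through $\N_{E/F}|_{U_E^m}$ if and only if $\bar\chi$ is trivial on $\ker(\bar{\N})$. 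The forward implication is immediate; for the converse, trivial on the kernel lets $\bar\chi$ descend to a character of $U_F^m/U_F^{m+1}$, which we can inflate to a character $\psi$ of $U_F^m$, giving $\chi|_{U_E^m} = \psi \circ \N_{E/F}|_{U_E^m}$.

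The conclusion is then immediate: by definition (as used in the paper for Bushnell--Henniart minimality) a minimal admissible pair has $\chi|_{U_E^m}$ not factoring through $\N_{E/F}$, so by the previous step $\bar\chi$ is non-trivial on $\ker(\bar{\N})$, which by the first step means $\chi$ is non-trivial on $T_{w,m,0} \cap T_{w,m}^m$, i.e.\ $\chi \in T_{w,m}^{\vee,\mathrm{gen}}$. The only subtlety worth flagging is the edge case $m=0$, where $T_{w,0}^0 = T_{w,0}$ and the relevant kernel becomes $\mu_{q+1}(k_2)$; in this case minimality of an admissible pair amounts precisely to non-triviality of $\chi$ on $\mu_{q+1}(k_2)$, so no separate argument is required.
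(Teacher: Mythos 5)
Your proof is correct and follows essentially the same route as the paper: both translate $T_{w,m,0}\cap T_{w,m}^m$ into the kernel of the norm $U_E^m/U_E^{m+1}\to U_F^m/U_F^{m+1}$ via $\iota_E$ and $\det\circ\iota_E=\bar{\N}$, and then invoke minimality. You spell out one step the paper merely asserts, namely that minimality (``$\chi|_{U_E^m}$ does not factor through $\N_{E/F}$'') is equivalent to non-triviality of $\chi$ on $\ker(\N\colon U_E^m\to U_F^m)$, for which you correctly use surjectivity of the norm on units in the unramified extension; this is a small but genuine clarification of the paper's first sentence.
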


\begin{proof}
As $(E/F,\chi)$ is minimal, $\chi|_{ \ker(\N_{E/F} \colon U_E^m \rar U_F^m) }$ is non-trivial. The level of $\chi$ is $m$, i.e., $\chi$ is trivial on $U_E^{m+1}$. Now we have $U_E^m/U_E^{m+1} \cong T_{w,m}^m$ via $\iota_E$, the norm map induces the map $\bar{\N} \colon U_E^m/U_E^{m+1} \rar (k[t]/t^{m+1})^{\ast}$, and moreover, $\det \circ \iota_E  = \bar{\N}$, where $\det \colon T_{w,m}^m \rar (k[t]/t^{m+1})^{\ast}$ is the determinant. Now $\ker(\det \colon T_{w,m}^m \rar (k[t]/t^{m+1})^{\ast}) = T_{w,m,0} \cap T_{w,m}^m$ and $\chi \in T_{w,m}^{\vee,gen}$ if and only it is non-trivial on $T_{w,m,0} \cap T_{w,m}^m$. This shows the lemma.
\end{proof}

In Section \ref{sec:def_of_reps} we defined the $K_m$-representation $\xi_{\chi}$. Our goal in Sections \ref{sec:trace_I_preliminaries}-\ref{sec:trace_III_Hm} will be to compute the trace of $\xi_{\chi}$ on some important subgroups of $K_m$. We will use the following trace formula from \cite{Bo}, which is similar to \cite{DL} Theorem 3.2 and is adapted to cover the situation with wild ramification.

\begin{lm}[\cite{Bo} Lemma  2.12] \label{lm:Boyarchenko_trace_formula}
Let $X$ be a separated scheme of finite type over a finite field $\bF_Q$ with $Q$ elements, on which a finite group $A$ acts on the right. Let $g \colon X \rar X$ be an automorphism of $X$, which commutes with the action of $A$. Let $\psi \colon A \rar \overline{\bQ}_{\ell}^{\ast}$ be a character of $A$. Assume that $\coh_c^i(X)[\psi] = 0$ for $i \neq i_0$ and $\Frob_Q$ acts on $\coh_c^{i_0}(X)[\psi]$ by a scalar $\lambda \in \overline{\bQ}_{\ell}^{\ast}$. Then 

\[ {\rm Tr}(g^{\ast}, \coh_c^{i_0}(X)[\psi]) = \frac{(-1)^{i_0}}{\lambda \cdot \sharp{A}} \sum_{\tau \in A} \psi(\tau) \cdot \sharp S_{g,\tau}, \]

\noindent where $S_{g,\tau} = \{ x \in X(\overline{\bF_q}) \colon g(\Frob_Q(x)) = x \cdot \tau \}$.
\end{lm}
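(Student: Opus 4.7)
The plan is to combine the Grothendieck--Lefschetz trace formula with character orthogonality for $A$. The key observation is that the set $S_{g,\tau}$ is exactly the fixed-point set of the endomorphism $\tau^{-1} \circ g \circ \Frob_Q$ of $X$, where we write $\tau$ for the automorphism $x \mapsto x \cdot \tau$ coming from the right $A$-action. Since $A$ acts on $X$ on the right and $g, \Frob_Q$ commute with this action, the composition $\tau^{-1} g \Frob_Q$ is a well-defined endomorphism of $X(\overline{\bF}_Q)$, and $S_{g,\tau}$ is its fixed-point set.

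First I would apply Grothendieck--Lefschetz to $\tau^{-1} g \Frob_Q$ to get
\[
\sharp S_{g,\tau} = \sum_i (-1)^i \tr\bigl((\tau^{-1})^\ast \circ g^\ast \circ \Frob_Q^\ast ; \coh_c^i(X)\bigr).
\]
Since $g$ and $\Frob_Q$ commute with $A$, both $g^\ast$ and $\Frob_Q^\ast$ preserve the isotypic decomposition $\coh_c^i(X) = \bigoplus_{\psi'} \coh_c^i(X)[\psi']$. On the $[\psi']$-summand, $(\tau^{-1})^\ast$ acts as the scalar $\psi'(\tau)^{-1}$ (using the convention by which $[\psi']$ is the summand where $A$ acts through $\psi'$). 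Thus
\[
\sharp S_{g,\tau} = \sum_i (-1)^i \sum_{\psi'} \psi'(\tau)^{-1} \tr\bigl(g^\ast \Frob_Q^\ast ; \coh_c^i(X)[\psi']\bigr).
\]

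Next I would multiply both sides by $\psi(\tau)$ and sum over $\tau \in A$. By orthogonality of characters on the finite abelian group (or more generally, Schur-type orthogonality applied coordinate-wise after diagonalizing), $\sum_{\tau \in A} \psi(\tau)\psi'(\tau)^{-1} = \sharp A \cdot \delta_{\psi,\psi'}$, so the sum collapses to
\[
\sum_{\tau \in A} \psi(\tau) \sharp S_{g,\tau} = \sharp A \cdot \sum_i (-1)^i \tr\bigl(g^\ast \Frob_Q^\ast ; \coh_c^i(X)[\psi]\bigr).
\]
Now the hypothesis that $\coh_c^i(X)[\psi] = 0$ for $i \neq i_0$ kills all but one term, and the hypothesis that $\Frob_Q$ acts on $\coh_c^{i_0}(X)[\psi]$ by the scalar $\lambda$ lets me factor $\Frob_Q^\ast$ out of the trace. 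This gives
\[
\sum_{\tau \in A} \psi(\tau) \sharp S_{g,\tau} = \sharp A \cdot (-1)^{i_0} \lambda \cdot \tr\bigl(g^\ast ; \coh_c^{i_0}(X)[\psi]\bigr),
\]
and dividing through by $\sharp A \cdot \lambda \cdot (-1)^{i_0} = \sharp A \cdot \lambda \cdot (-1)^{-i_0}$ yields the claimed formula.

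The proof is essentially a straightforward formal computation; the only subtle point is fixing the convention for the $A$-action on cohomology so that the pullback $(\tau^{-1})^\ast$ acts by $\psi'(\tau)^{-1}$ on the $[\psi']$-summand, consistent with the definition of $W[\psi]$ used in the paper (the maximal quotient on which $A$ acts by $\psi$). Once this bookkeeping is settled, the argument is a standard application of Lefschetz plus orthogonality, and no further geometric input is needed.
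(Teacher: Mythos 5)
The paper does not actually prove this lemma; it is quoted verbatim from Boyarchenko, \cite{Bo} Lemma 2.12, so there is no internal proof to compare against. Your argument is nevertheless correct, and it is, as far as I can tell, the same argument Boyarchenko gives: recognize $S_{g,\tau}$ as the fixed locus of $r_{\tau^{-1}}\circ g\circ \Frob_Q$, apply the fixed-point formula, and then isolate the $\psi$-isotypic piece by averaging against $\psi$ (i.e., by inserting the idempotent $e_\psi=\frac{1}{\sharp A}\sum_{\tau}\psi(\tau)\tau^{-1}$, which commutes with $g^\ast$ and $\Frob_Q^\ast$). Writing it through the projector rather than a direct-sum decomposition also removes any dependence on $A$ being abelian, though the distinction is immaterial here.

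The one place where you are too quick is the phrase ``apply Grothendieck--Lefschetz to $\tau^{-1}g\Frob_Q$.'' The trace formula for the Frobenius alone does not suffice: you need the fixed-point formula for an endomorphism of the form (finite-order automorphism, defined over $\bF_Q$ and commuting with $\Frob_Q$) composed with $\Frob_Q$. This is a genuinely stronger input --- it is \cite{DL} Theorem 3.2, or a consequence of the Lefschetz--Verdier formula --- and to invoke it one needs $g$ to commute with $\Frob_Q$ and $r_{\tau^{-1}}\circ g$ to have finite order. Neither hypothesis is stated explicitly in the lemma as quoted, but both hold in every application in the paper (where $g$ comes from the finite group $K_m$ acting by $k$-rational automorphisms). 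You should name the correct theorem and state these hypotheses explicitly; once that is done the rest of your computation, including the sign and the factor $\sharp A\cdot\lambda$, is correct.
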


We adapt this to our situation. Recall from \eqref{eq:Def_of_Y_m} and Proposition \ref{prop:structure_of_Yvm_bla}(i) that $Y_v^m$ was parametrized by coordinates $a \in \bk[t]/t^{n-1}, C \in (\bk[t]/t^{m+1})^{\ast}, A \in \bk[t]/t^{m+1}$ with $a_0 = a \mod t \not\in k$. We use Lemma \ref{lm:Boyarchenko_trace_formula} with $Q = q^2$ and $X = Y_v^m$.

\begin{lm}\label{lm:simplifying_the_equations_a_bit}
Let $(E/F,\chi)$ be a minimal pair and let $g \in K$. Assume $g$ acts on $Y_v^m$ such that there is some rational expression $p(g,a) \in (\bar{k}[t]/t^{m+1})^{\ast}$ in $a$, such that $g.(a,C,A) = (g.a, p(g,a) \cdot C,g.A)$ on coordinates. Let $\tau \in T_{w,m}$. Then

\[ \Tr(g^{\ast}; \coh_c^{i_0}(Y_v^m)) = \frac{1}{q^{m+1}} \sum_{\tau \in T_{w,m}} \chi(\tau) \sharp S_{g,\tau}^{\prime}, \]

\noindent where $S_{g,\tau}^{\prime}$ is the set of solutions in the variable $a = a \mod t^{m+1} \in \bar{k}[t]/t^{m+1}$ (with $a_0 \in \bk \sm k$) of the equations

\begin{eqnarray}
\sigma(\sigma(a) - a)(\sigma(a) - a)^{-1} &=& - p(\sigma^2(a),g)^{-1} \tau \label{eq:simplified_eq_for_S_xtau} \\
g.\sigma^2(a)  &=& a. \nonumber
\end{eqnarray}
\end{lm}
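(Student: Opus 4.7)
The plan is to apply Boyarchenko's trace formula (Lemma \ref{lm:Boyarchenko_trace_formula}) with $X = Y_v^m$ carrying the right $T_{w,m}$-action induced from $I_{m,w_m}/I^m$ (Lemma \ref{lm:IIm-action_factors_over_a_Twm_action}), $\psi = \chi$, $Q = q^2$, and the given $g$. Both hypotheses of that lemma are in place: the purity $\coh_c^i(Y_v^m)[\chi] = 0$ for $i \neq i_0 := d_0(n,m) - m$ follows from Corollary \ref{cor:purity_of_char_spaces} combined with Lemma \ref{lm:if_chi_minimal_then_chi_gen} (minimality forces $\chi \in T_{w,m}^{\vee,{\rm gen}}$), and Corollary \ref{cor:non-triv-character-spaces} gives that $\Frob_{q^2}$ acts on $\coh_c^{i_0}(Y_v^m)[\chi]$ by the scalar $\lambda = \chi(-1)(-1)^{i_0}q^{i_0}$. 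Since $\chi(-1) \in \{\pm 1\}$ and $|T_{w,m}| = (q^2-1)q^{2m}$, the formula becomes
\[ \Tr(g^\ast; \coh_c^{i_0}(Y_v^m)[\chi]) = \frac{\chi(-1)}{q^{i_0}|T_{w,m}|}\sum_{\tau \in T_{w,m}} \chi(\tau)\,|S_{g,\tau}|, \]
and I read the trace on the left as the trace on the $\chi$-isotypic component.

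Next I would unravel $|S_{g,\tau}|$ in the coordinates of $\psi_v^m$. A direct Iwahori-factorization computation shows that right multiplication by $\tau = \diag(C_\tau,\sigma(C_\tau))$ on $Y_v^m$ sends $(a,C,D,A,0) \mapsto (a,\,CC_\tau,\,D\sigma(C_\tau),\,A\sigma(C_\tau)/C_\tau,\,0)$. Combined with the hypothesis $g.(a,C,A) = (g.a,\,p(g,a)C,\,g.A)$, the equation $g \circ \Frob_{q^2}(x) = x \cdot \tau$ decouples into three independent constraints: $g.\sigma^2(a) = a$; $p(g,\sigma^2(a))\,\sigma^2(C)/C = C_\tau$; and $g.\sigma^2(A) = A\cdot \sigma(C_\tau)/C_\tau$. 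The defining relation $\sigma^2(C)/C = \sigma(S)/S$ of $Y_v^m$ converts the middle constraint into a relation between $\sigma(S)/S$, $p(g,\sigma^2(a))$ and $\tau$ matching \eqref{eq:simplified_eq_for_S_xtau}, once one identifies the abstract $\tau \in T_{w,m}$ with $\mp C_\tau$ in the appropriate sign convention (traceable to the $-t^n$ entry in $\dot w$ and the identification in Lemma \ref{lm:I/Im-quot-structure}).

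The fixed-point count then factorises as $|S_{g,\tau}| = |S'_{g,\tau}|\cdot N_{\rm lift}\cdot N_C\cdot N_A(\tau)$, where $N_{\rm lift}$ counts lifts of $a\bmod t^{m+1}$ to the full $a$-space satisfying $g.\sigma^2(a) = a$ on the higher-degree coefficients, $N_C = (q^2-1)q^{2m}$ is the number of $C \in (\bar{k}[t]/t^{m+1})^\ast$ with $\sigma^2(C)/C = \sigma(S)/S$ (by Lang--Steinberg, the kernel of $C \mapsto \sigma^2(C)/C$ is $(k_2[t]/t^{m+1})^\ast$), and $N_A(\tau)$ counts $A$ solving the twisted Lang-type equation $g.\sigma^2(A) = A\cdot\sigma(C_\tau)/C_\tau$. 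Substituting these counts back and using $i_0 = 2n+m-1$, I expect $\chi(-1)$, $q^{i_0}$ and $|T_{w,m}|$ in the prefactor to collapse against $N_{\rm lift}\cdot N_C\cdot N_A$, leaving exactly $1/q^{m+1}$ on the right-hand side.

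The main obstacle will be the precise arithmetic bookkeeping of these $q$-powers: verifying that the product $N_{\rm lift}\cdot N_A(\tau)$ is a $\tau$-independent power of $q$ of exactly the size required for the prefactor to collapse correctly, and pinning down the sign convention identifying $\tau$ with $\mp C_\tau$ to match \eqref{eq:simplified_eq_for_S_xtau}. Both are formal computations once the action of $g$ on the higher-degree coordinates of $a$ and on $A$ is made explicit, which in each case reduces to a Lang-type recursion on an affine space over $\bar{k}$.
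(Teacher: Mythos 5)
Your proposal is correct and follows the paper's argument essentially verbatim: apply Lemma \ref{lm:Boyarchenko_trace_formula} with $Q=q^2$, the purity input coming from Lemma \ref{lm:if_chi_minimal_then_chi_gen} together with Corollaries \ref{cor:non-triv-character-spaces} and \ref{cor:purity_of_char_spaces}, then factor the fixed-point count over the free coordinates $A_0,\dots,A_{m-1}$ and $a_{m+1},\dots,a_{n-1}$ (each contributing $q^2$), the $(q^2-1)q^{2m}$-fold freedom in $C$ once $a$ is fixed, and the remaining equations on $a \bmod t^{m+1}$. One small warning on the point you left open: the minus sign is \emph{not} traceable to the $-t^n$ entry in $\dot w$ or to the identification in Lemma \ref{lm:I/Im-quot-structure} — the fixed-point condition naturally yields $\sigma(S)S^{-1} = p^{-1}\tau$, i.e.\ membership in $S'_{g,-\tau}$, and the $\chi(-1)$ produced by the substitution $\tau\mapsto -\tau$ in the sum is exactly what cancels the $\chi(-1)$ in the Frobenius eigenvalue of Corollary \ref{cor:non-triv-character-spaces}; that is the sole origin of the sign. (Also: the $\tau$-twist $\sigma^2(A)=A\,\sigma(C_\tau)/C_\tau$ you correctly wrote down still has exactly $q^{2m}$ solutions by a Lang argument, $\tau$-independently, which is why the paper can afford the shorthand $\sigma^2(A)=A$.)
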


\begin{proof} First we claim that $\sharp S_{g,\tau} = q^{2(n-1)}(q^2 -1)q^{2m} \sharp S_{g,-\tau}^{\prime}$. One computes that $g$ acts trivially on the coordinates $A_0, \dots, A_{m-1}$ (cf. \eqref{eq:general_formula_for_action_on_Yvm}). We observe that a point $(a,c,A) \in Y_v^m(\bar{k})$ lies in $S_{g,\tau}$ if and only if it satisfies the following equations:

\begin{eqnarray*}
\sigma^2(C)C^{-1} &=& \sigma(\sigma(a) - a)(\sigma(a) - a)^{-1} \mod t^{m + 1} \\
g.\sigma^2(a)  &=& a \label{eq:for_unipotent_action_1} \\ 
p(\sigma^2(a),g) \cdot \sigma^2(C) &=& C \cdot \tau \\
\sigma^2(A) &=& A. 
\end{eqnarray*}

\noindent Here $A_0,\dots A_{m-1}$ are 'free' coordinates and contribute a factor $q^2$ each. Also $a_{m+1}, \dots, a_{n-1}$ occur only in the second equation, and it is easy to see that each of them also contributes the factor $q^2$. Finally, for a fixed $a$, the third equation has exactly $(q^2 - 1)q^{2m}$ different solutions in $C \in (\bk[t]/t^{m+1})^{\ast}$, and we can eliminate $C$ by putting the first and the third equations together. Thus $\sharp S_{g,\tau}$ is equal to $q^{2(n-1)} \cdot (q^2 - 1)q^{2m}$ times the number of solutions of equations \eqref{eq:simplified_eq_for_S_xtau} in $a \in \bk[t]/t^{m+1}$ with $\tau$ replaced by $-\tau$. This shows the claim.

Now, Corollaries \ref{cor:non-triv-character-spaces} and \ref{cor:purity_of_char_spaces} show that the conditions of Lemma \ref{lm:Boyarchenko_trace_formula} are satisfied and the lemma follows from an easy computation involving the above claim. \qedhere
\end{proof}

\begin{Def}
For $x \in G(\caO_L/t^{m+1}) \sm \{ 1 \}$ the \emph{level} of $x$ is the maximal integer $\ell(x) \geq 0$, such that $x \equiv 1 \mod t^{\ell(x)}$. The level of $1$ is $m+1$. 
\end{Def}

This definition is auxiliary and will be used in the next two sections. Note that the level is invariant under conjugation in $G(\caO_L/t^{m+1})$.


\subsection{Trace computations II: $N_m$-action on $V_{\chi}$}\label{sec:unipotent_action_and_central_char}

We keep notations from Sections \ref{sec:def_of_reps},\ref{sec:trace_I_preliminaries}. Let further $N_m \subset K_m$ denote the subgroup

\[ N_m = \left\{ \matzz{1}{u}{}{1} \colon x \in k[t]/t^{m+1} \right\}. \]

\noindent For $0 \leq i \leq m+1$, let $N_m^i$ denote the subgroup of $N_m$ consisting of elements congruent to $1$ modulo $t^i$ and let $N_m^{\vee,gen}$ denote the set of characters of $N_m$, which are non-trivial on $N_m^m$.


\begin{prop}\label{prop:V_chi_as_U_m-representation}
As $N_m$-representations one has 
\[ \xi_{\chi} = \Ind\nolimits_1^{N_m} 1 - \Ind\nolimits_{N_m^m}^{N_m} 1 = \bigoplus_{\psi \in N_m^{\vee,gen}}  \psi \] 

\noindent In particular, $\dim_{\overline{\bQ}_{\ell}} V_{\chi} = (q-1)q^m$.
\end{prop}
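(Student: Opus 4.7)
The plan is to compute $\Tr(g^\ast;\xi_\chi)$ for every $g\in N_m$ via Boyarchenko's trace formula and match it with the character of $\bigoplus_{\psi\in N_m^{\vee,gen}}\psi$, which takes the values $(q-1)q^m$ at $g=1$, $-q^m$ on $N_m^m\setminus\{1\}$, and $0$ on $N_m\setminus N_m^m$. First I would determine the action of $g=\matzz{1}{\tilde u}{0}{1}\in N_m$ on $Y_v^m$ in the coordinates of~\eqref{eq:explicit_Cvm_param_GL_2}: lifting $u$ to a polynomial $\tilde u\in\caO_F$ of degree $\le m<n$, a direct matrix computation yields $g\cdot\psi_v^m(a,C,D,A,0)=\psi_v^m(a+\tilde u,C,D,A',0)$, so the factor $p(g,a)$ of Lemma~\ref{lm:simplifying_the_equations_a_bit} is identically $1$. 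Minimality of $\chi$ (Lemma~\ref{lm:if_chi_minimal_then_chi_gen}) together with Corollaries~\ref{cor:non-triv-character-spaces} and~\ref{cor:purity_of_char_spaces} provide the purity hypothesis, and Lemma~\ref{lm:simplifying_the_equations_a_bit} then gives
\[ \Tr(g^\ast;\xi_\chi)=\frac{1}{q^{m+1}}\sum_{\tau\in T_{w,m}}\chi(\tau)\cdot\sharp S'_{g,\tau}. \]

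Next I would substitute $b=\sigma(a)-a$. The equation $\sigma^2(a)-a=-\tilde u$ forces $\sigma^2(b)=b$ (so $b\in k_2[t]/t^{m+1}$) and $\Tr_{k_2/k}(b)=-\tilde u$, while~\eqref{eq:simplified_eq_for_S_xtau} becomes $\sigma(b)=-\tau b$ and determines $\tau=1+\tilde u/b$ uniquely whenever $b$ is a unit (equivalently $a_0\notin k$). Each such $b$ has exactly $q^{m+1}$ lifts to $a$, so after cancellation of the prefactor
\[ \Tr(g^\ast;\xi_\chi)=\sum_{b}\chi\!\left(1+\tfrac{\tilde u}{b}\right), \]
summed over units $b\in(k_2[t]/t^{m+1})^\ast$ with $\Tr_{k_2/k}(b)=-\tilde u$.

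I would then split the computation into three cases. For $g=1$ the summand equals $\chi(1)=1$ and the count of valid $b$ is $(q-1)q^m$, matching $\dim\xi_\chi$. For $g\in N_m^m\setminus\{1\}$, with $\tilde u=u_mt^m$ and $u_m\in k^\ast$, the summand collapses to $1+(u_m/b_0)t^m$ and depends only on $b_0\in\ker(\Tr_{k_2/k})\setminus\{0\}$; using $b_0^2\in k$ one checks that $b_0\mapsto u_m/b_0$ is a self-bijection of $\ker(\Tr)\setminus\{0\}$, so the sum equals
\[ q^m\sum_{z\in(T_{w,m,0}\cap T_{w,m}^m)\setminus\{1\}}\chi(z)=-q^m, \]
using that $\chi$ is non-trivial on $T_{w,m,0}\cap T_{w,m}^m$ by Lemma~\ref{lm:if_chi_minimal_then_chi_gen}. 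For $g\in N_m\setminus N_m^m$ with $j=\ell(g)<m$, the key move is to partition the $b$'s into $\ker(\Tr_{k_2/k})$-orbits under $b\mapsto b+t^{m-j}\delta$; a direct expansion shows
\[ \frac{\tau(b+t^{m-j}\delta)}{\tau(b)}=1-\frac{\tilde u_j\,\delta}{b_0^2\,\tau(b)_0}\,t^m\;\in\;T_{w,m,0}\cap T_{w,m}^m, \]
and as $\delta$ varies over $\ker(\Tr)$ this ratio sweeps out $T_{w,m,0}\cap T_{w,m}^m$ bijectively, so orthogonality kills the inner sum and hence the whole trace.

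The main obstacle will be this last case: one must verify uniformly in $j$ that $\tilde u_j/(b_0^2\,\tau(b)_0)$ is a nonzero element of $k$, so that the ratio above really lies in $T_{w,m,0}\cap T_{w,m}^m$ and the bijection statement makes sense. This rests on the subcase-dependent observations that $b_0\in\ker(\Tr)$ and $\tau(b)_0=1$ when $j\ge 1$, versus $b_0(b_0+u_0)\in k$ when $j=0$. Once all three character values are pinned down, the decomposition $\xi_\chi|_{N_m}\cong\bigoplus_{\psi\in N_m^{\vee,gen}}\psi$ follows by an elementary comparison with the characters of $\Ind_1^{N_m}1$ and $\Ind_{N_m^m}^{N_m}1$.
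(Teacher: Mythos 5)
Your proof is correct, and the overall strategy matches the paper's: apply Lemma \ref{lm:simplifying_the_equations_a_bit} (whose hypotheses are supplied by Lemma \ref{lm:if_chi_minimal_then_chi_gen} and Corollaries \ref{cor:non-triv-character-spaces}, \ref{cor:purity_of_char_spaces}), substitute $b=\sigma(a)-a$ so that $b$ ranges over units of $k_2[t]/t^{m+1}$ with $\Tr_{k_2/k}(b)=-\tilde u$ and $\tau=1+\tilde u/b$, and compare the resulting trace values with the character of $\Ind\nolimits_1^{N_m}1-\Ind\nolimits_{N_m^m}^{N_m}1$. You diverge from the paper only in the last step. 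The paper proves Lemma \ref{lm:computing_unipotent_traces}, showing that $\sharp S'_{g,\tau}=q^{m+1+\ell(g)}$ uniformly over all compatible $\tau$ (those in $T_{w,m,0}$ with $v_t(1-\tau)=\ell(g)$); plugging this in gives $\Tr(g^\ast;V_\chi)=q^{\ell(g)}\sum_\tau\chi(\tau)$ over $T_{w,m,0}\cap(T_{w,m}^{\ell(g)}\setminus T_{w,m}^{\ell(g)+1})$, which vanishes for $\ell(g)<m$ by a two-line orthogonality argument. You instead evaluate $\sum_b\chi(1+\tilde u/b)$ directly; for $\ell(g)<m$ your orbit decomposition $b\mapsto b+t^{m-j}\delta$ with $\delta\in\ker\Tr_{k_2/k}$ produces the cancellation orbit by orbit. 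This bypasses Lemma \ref{lm:computing_unipotent_traces} but requires precisely the subcase check you flag, namely $\tilde u_j/(b_0^2\tau(b)_0)\in k^\ast$, and this does hold: for $j\geq1$ one has $b_0^q=-b_0$, hence $b_0^2\in k$ and $\tau(b)_0=1$; for $j=0$ one has $b_0+u_0=-b_0^q$, hence $b_0^2\tau(b)_0=-b_0^{q+1}=-N_{k_2/k}(b_0)\in k^\ast$. So both proofs are sound; yours trades the uniform count of Lemma \ref{lm:computing_unipotent_traces} for a slightly more hands-on cancellation argument.
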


\begin{proof} We claim that for $g \in N_m$ we have

\begin{equation}\label{eq:traces_of_unipotents} 
{\rm Tr}(g^{\ast}, V_{\chi}) = \begin{cases} q^{m+1} - q^m & \text{if $g = 1$} \\ -q^m & \text{if $g \in N_m^m \sm \{ 1 \}$} \\ 0 & \text{if $g \not\in N_m^m$} \end{cases}. 
\end{equation}

The proposition follows from this claim by comparing the traces of the $N_m$-representations on the left and the right sides. We need the following lemma. Let $S_{g,\tau}^{\prime}$ be as in Lemma \ref{lm:simplifying_the_equations_a_bit}.

\begin{lm}\label{lm:computing_unipotent_traces} Let $g \in N_m$ of level $\ell(g) \leq m+1$. Then $S_{g,\tau}^{\prime} = \emptyset$, unless $v_t(1 - \tau) = \ell(g)$ and $\tau \cdot \sigma(\tau) = 1$ in $k_2[t]/t^{m+1}$. If both are satisfied, then
\[ \sharp S_{g,\tau}^{\prime} = 
\begin{cases} 
 (q-1)q^{2m+1} & \text{if $g = 1$ (and hence $\tau = 1$),} \\ 
q^{m + 1 + \ell(g)}  & \text{if $g \in N_m \sm \{1\}$.}
\end{cases} \]


\end{lm}

We postpone the proof of Lemma \ref{lm:computing_unipotent_traces} and finish the proof of our claim. For $g = 1$, we have $S_{1,\tau}^{\prime} = \emptyset$ unless $\tau = 1$ and $\sharp S_{1,1}^{\prime} = q^m(q-1)$. The claim follows immediately from Lemma \ref{lm:simplifying_the_equations_a_bit}. Let now $g \in N_m \sm \{1\}$ of level $\ell = \ell(g) \leq m$. Then Lemma \ref{lm:simplifying_the_equations_a_bit} shows

\begin{eqnarray*} 
{\rm Tr}(g^{\ast},  V_{\chi}) &=& \frac{1}{q^{m+1}} \sum_{\tau \in T_{w,m,0} \cap (T_{w,m}^{\ell} \sm T_{w,m}^{\ell + 1})} \chi(\tau)\sharp S_{g,\tau}^{\prime}  \\ 
&=& q^{\ell} \cdot \sum_{\tau \in T_{w,m,0} \cap (T_{w,m}^{\ell} \sm T_{w,m}^{\ell+1})} \chi(\tau) = -q^{\ell} \cdot \sum_{\tau \in T_{w,m,0} \cap  T_{w,m}^{\ell+1}} \chi(\tau),
\end{eqnarray*}

\noindent the last equation being true as $\chi$ is a non-trivial on $T_{w,m,0} \cap T_{w,m}^{\ell}$. Unless $\ell = m$, the sum in the last expression vanishes, as $\chi$ is still a non-trivial character on $T_{w,m,0} \cap  T_{w,m}^{\ell + 1}$. If $\ell = m$, we have $T_{w,m}^{\ell + 1} = \{1\}$, and \eqref{eq:traces_of_unipotents} follows. 
\end{proof}

\begin{proof}[Proof of Lemma \ref{lm:computing_unipotent_traces}]
As both, the $T_{w,m}$- and the $K_m$-actions on $Y_v^m$ have their origin in matrix multiplication, one sees easily that $S_{g,\tau} = \emptyset$, unless $\det(\tau) = \det(g)$. Thus we can assume this, i.e., $\tau\sigma(\tau) = 1$. Write $g = \matzz{1}{x}{}{1} \in N_m$ with $x \in k[t]/t^{m+1}$. Then $v_t(x) = \ell(g)$. The action of $g$ can be describen by $g.(a,C) = (a + x, C)$. By Definition, $S_{g,\tau}^{\prime}$ is the set of solutions of

\begin{eqnarray}\label{eq:for_unipotent_action_15}
\sigma(\sigma(a) - a)(\sigma(a) - a)^{-1} &=& - \tau \\
\sigma^2(a) + x &=& a \nonumber
\end{eqnarray}

\noindent in $a \in \bar{k}[t]/t^{m+1}$ with $a_0 \not\in k$. Let $s = \sigma(a) - a$. For a fixed $s \in (\bar{k}[t]/t^{m+1})^{\ast}$, the equation $\sigma(a) - a = s$ in $a$ has exactly $q^{m+1}$ solutions. After adding $-\sigma(a)$ to both sides of the second equation in \eqref{eq:for_unipotent_action_15}, this second equation gets $\sigma(s) + x = -s$.  Thus we are reduced to solve the equations

\begin{eqnarray} 
\sigma(s) + x &=& -s \label{eq:for_unipotent_action_2} \\ 
\sigma(s)s^{-1} &=& - \tau \nonumber
\end{eqnarray}

\noindent in the variable $s \in (\bk[t]/t^{m+1})^{\ast}$. Putting $\sigma(s) = -(x + s)$ into the second equation, we obtain $1 - \tau = -xs^{-1}$ and one checks that equations \eqref{eq:for_unipotent_action_2} are equivalent to 

\begin{eqnarray} 
\sigma(s) + x &=& -s \label{eq:for_unipotent_action_3} \\ 
1 - \tau &=& -xs^{-1} \nonumber
\end{eqnarray}

\noindent From the second equation of \eqref{eq:for_unipotent_action_3} and since $s$ is must be a unit, we see that either $S_{g,\tau}^{\prime} = \emptyset$ or $v_t(1 - \tau) = v_t(x)$. Assume the second holds and let $\mu = v_t(x)$. If $\mu = m+1$, then $g = 1$, $\tau = 1$ and the lemma follows. Assume now $0 < \mu < m+1$. Then $x = t^{\mu} \tilde{x}$ for some $\tilde{x} \in (\bk[t]/t^{m+1-\mu})^{\ast}$ and $\tau = 1 + \tilde{\tau}t^{\mu}$ for some $\tilde{\tau} \in (k_2[t]/t^{m+1-\mu})^{\ast}$. The condition $\tau \sigma(\tau) = 1$ is equivalent to 

\begin{equation} \label{eq:det_condition_forx_and_tau_for_unip_eq}
\tilde{\tau} + \sigma(\tilde{\tau}) + \tilde{\tau}\sigma(\tilde{\tau})t^{\mu} \equiv 0 \mod t^{m + 1 - \mu}.
\end{equation}

\noindent  The second equation of \eqref{eq:for_unipotent_action_3} is equivalent to $s \equiv \tilde{\tau}^{-1}\tilde{x} \mod t^{m+1 - \mu}$, i.e., $s$ is uniquely determined modulo $t^{m+1 - \mu}$. Moreover, if $s \equiv \tilde{\tau}^{-1}\tilde{x}  \mod t^{m+1 - \mu}$ we have

\[ \sigma(s) + x + s \equiv \sigma(\tilde{\tau}^{-1}\tilde{x}) + x + \tilde{\tau}^{-1}\tilde{x} = \tilde{x}(t^{\mu} + \sigma(\tilde{\tau})^{-1} + \tilde{\tau}^{-1}) \equiv 0 \mod t^{m+1-\mu}, \]

\noindent where the last equation follows from \eqref{eq:det_condition_forx_and_tau_for_unip_eq}. Thus $s \equiv \tilde{\tau}^{-1}\tilde{x} \mod t^{m+1 - \mu}$ is the unique solution of equation \eqref{eq:for_unipotent_action_3} modulo $t^{m+1 - \mu}$. One easily sees that over any solution of the first equation of \eqref{eq:for_unipotent_action_3} modulo $t^{\lambda}$ lie precisely $q$ solutions of it modulo $t^{\lambda + 1}$. This shows that \eqref{eq:for_unipotent_action_3} has precisely $q^{\mu}$ solutions if $\mu > 0$. It remains to handle the case $\mu = 0$, but this is done similarly to the case $\mu > 0$.

\end{proof}

\begin{cor}\label{cor:V_chi_irreducible}
$(\xi_{\chi},V_{\chi})$ is irreducible as $B(\caO_F/t^{m+1})$-representation and hence also as $K_m$-representation.
\end{cor}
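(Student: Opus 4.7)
The plan is to apply a Clifford-theoretic argument to the semidirect product decomposition $B(\caO_F/t^{m+1}) = T_m \ltimes N_m$, where $T_m = T(\caO_F/t^{m+1})$, leveraging the multiplicity-one decomposition already obtained in Proposition \ref{prop:V_chi_as_U_m-representation}. The implication ``irreducible for $B_m \Rightarrow$ irreducible for $K_m$'' will be immediate, since any $K_m$-subrepresentation is in particular a $B_m$-subrepresentation.

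First I would exploit the $N_m$-isotypic decomposition $V_\chi = \bigoplus_{\psi \in N_m^{\vee,gen}} V_\chi[\psi]$ from the preceding proposition. Since each isotypic component is one-dimensional, for any $N_m$-stable subspace $W \subseteq V_\chi$ one has $W = \bigoplus_{\psi \in S(W)} V_\chi[\psi]$ for some subset $S(W) \subseteq N_m^{\vee,gen}$ (this is the standard Fourier/averaging argument on the finite abelian group $N_m$). Thus classifying $B_m$-subrepresentations reduces to classifying $T_m$-stable subsets $S \subseteq N_m^{\vee,gen}$, where $T_m$ acts on $N_m^\vee$ via conjugation in $B_m$.

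Next I would compute this $T_m$-action explicitly. Writing $n(u) = \bigl(\begin{smallmatrix}1 & u \\ 0 & 1\end{smallmatrix}\bigr)$ and $\diag(s,d) \in T_m$, one has $\diag(s,d)\, n(u)\, \diag(s,d)^{-1} = n(sd^{-1}u)$, so $T_m$ acts on $N_m^\vee$ by $\psi \mapsto (u \mapsto \psi(s^{-1}d\,u))$. Identifying $N_m^\vee$ with $k[t]/t^{m+1}$ via a fixed non-trivial additive character of $k$ and the residue pairing in degree $t^m$, the subset $N_m^{\vee,gen}$ corresponds to the units of $k[t]/t^{m+1}$, and the $T_m$-action is scalar multiplication by $s^{-1}d \in (k[t]/t^{m+1})^\ast$. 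Since $s^{-1}d$ ranges over \emph{all} units as $(s,d)$ varies over $T_m$, the $T_m$-action on $N_m^{\vee,gen}$ is transitive.

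Combining these two observations, any non-zero $B_m$-subrepresentation $W$ corresponds to a non-empty $T_m$-stable subset $S(W) \subseteq N_m^{\vee,gen}$, which by transitivity must equal all of $N_m^{\vee,gen}$, forcing $W = V_\chi$. This gives irreducibility of $\xi_\chi$ as a $B_m$-representation. There is no substantial obstacle: the only point that requires care is verifying that $T_m$ genuinely acts on the $N_m$-isotypic decomposition of $V_\chi$ (which follows from $\xi_\chi$ being a representation of $B_m$, together with the standard identity $n \cdot (t\cdot v) = t \cdot (t^{-1}nt\cdot v)$ showing that $t$ sends the $\psi$-isotypic component into the $\psi^t$-isotypic component), and verifying the transitivity computation above.
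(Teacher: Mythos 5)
Your proof is correct and follows essentially the same route as the paper: decompose a $B_m$-invariant subspace into its one-dimensional $N_m$-isotypic pieces (using Proposition \ref{prop:V_chi_as_U_m-representation}), then use transitivity of the $T(\caO_F/t^{m+1})$-conjugation action on $N_m^{\vee,gen}$ to force the subspace to be all of $V_\chi$. The only difference is that you spell out the conjugation formula and the identification of $N_m^{\vee,gen}$ with $(k[t]/t^{m+1})^\ast$ to verify transitivity, whereas the paper asserts this directly.
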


\begin{proof}
For $\psi \in N_m^{\vee,gen}$, let $V_{\chi}[\psi]$ denote the $\psi$-isotypic component of $V_{\chi}$. By Proposition \ref{prop:V_chi_as_U_m-representation}, $V_{\chi}[\psi]$ is one-dimensional. For $x \in T(\caO_F/t^{m+1})$, let $\psi^x$ be the character of $N_m$ defined by $\psi^x(g) = \psi(x^{-1}gx)$. Then $x.V_{\chi}[\psi] \subseteq V_{\chi}[\psi^x]$. Let $0 \neq W \subseteq V_{\chi}$ be a $B(\caO_F/t^{m+1})$-invariant subspace. Then $W$ decomposes as the sum of its $N_m$-isotypical components $W[\psi]$ ($\psi \in N_m^{\vee,gen}$) and $W[\psi] \subseteq V_{\chi}[\psi]$. As $V_{\chi}[\psi]$ is one-dimensional, $W[\psi]$ is either $0$ or equal to $V_{\chi}[\psi]$. But as $W \neq 0$, there is a $\psi$, such that $W[\psi] = V_{\chi}[\psi]$. Note that the natural action of $T(\caO_F/t^{m+1})$ on $N_m^{\vee}$ restricts to a transitive action on $N_m^{\vee,gen}$. This transitivity implies that $W[\psi] = V_{\chi}[\psi]$ for all $\psi \in N_m^{\vee,gen}$, i.e., $W = V_{\chi}$.
\end{proof}


\subsection{Trace computations III: $H_m$-action on $V_{\chi}$} \label{sec:trace_III_Hm}

We keep notations from Sections \ref{sec:def_of_reps} and \ref{sec:trace_I_preliminaries}. Let $H_m \subset K_m$ be a non-split torus, i.e., a subgroup which is conjugate to $T_{w,m}$ inside $G(\caO_L/t^{m+1})$. Let $Z_m$ be the center of $K_m$. One has $Z_m \subseteq H_m$. We fix an isomorphism $c_s \colon T_{w,m} \stackrel{\sim}{\rar} H_m$, given by conjugation with $s \in G( k_2[[t]] )$, and let $H_m^i = c_s(T_{w,m}^i)$. Let $\tilde{\chi} = \chi \circ c_s^{-1}$ and $\tilde{\chi}^{\sigma} = \chi^{\sigma} \circ c_s^{-1}$, where $\chi^{\sigma} = \chi \circ \sigma$. 

Note that if $s^{\prime} \in G(L)$ is another matrix conjugating $T_{w,m}$ into $H_m$, then $c_s^{\prime}c_s^{-1}$ is either identity or $\sigma$. In particular, up to $\sigma$-action, $\tilde{\chi}$ do not depend on the choice of the element $s$. 


For a character $\psi \in H_m^{\vee}$, let $i(\psi) \in \{0, \dots, m+1 \}$ be the smallest integer such that $\psi$ coincides with $\tilde{\chi}$ or $\tilde{\chi}^{\sigma}$ on the subgroup $H_m^{i(\psi)}$ (in particular, $i(\psi) = 0$ if and only if $\psi = \tilde{\chi}$ or $\tilde{\chi}^{\sigma}$).

\begin{thm}\label{thm:H_m-decomposition_of_pi_chi} Let $\psi$ be a character of $H_m$. Then $\langle \psi, \xi_{\chi} \rangle_{H_m} = 0$ unless $\psi|_{Z_m} = \tilde{\chi}|_{Z_m}$. Assume $\psi|_{Z_m} = \tilde{\chi}|_{Z_m}$. Then 

\[ \langle \psi, \xi_{\chi} \rangle_{H_m} = \begin{cases} 1 & \text{if $m - i(\psi)$ odd} \\ 
0 & \text{if $m - i(\psi)$ even}. \end{cases} \]
\end{thm}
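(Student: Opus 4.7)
The plan is to compute the character of $\xi_\chi$ restricted to $H_m$ directly, and then extract the multiplicity by orthogonality:
\[ \langle \psi, \xi_\chi \rangle_{H_m} = \frac{1}{|H_m|} \sum_{h \in H_m} \psi(h^{-1}) \Tr(h^{\ast}; V_\chi). \]
This parallels the argument of Section \ref{sec:unipotent_action_and_central_char}, but with $N_m$ replaced by $H_m$. First I would conjugate by $s$, so that it suffices to compute traces of elements $\tau_0 = \operatorname{diag}(\beta,\sigma(\beta)) \in T_{w,m}$ acting on a twisted copy of $Y_v^m$; the isomorphism $c_s$ then pushes this forward to $H_m$-traces. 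Writing out the left action of such a $\tau_0$ on the parametrization \eqref{eq:explicit_Cvm_param_GL_2} and rebracketing into the normal form, one finds that $\tau_0$ acts on $a$ by scaling with $\beta\sigma(\beta)^{-1}$ (possibly followed by Iwahori adjustment) and on $C$ by multiplication by a rational expression $p(\tau_0,a)$ in $a$. Crucially, this is precisely the hypothesis of Lemma \ref{lm:simplifying_the_equations_a_bit}.

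With that in place, applying Lemma \ref{lm:simplifying_the_equations_a_bit} reduces the trace to a count $\sharp S'_{h,\tau}$ of solutions of the system
\[ \sigma(\sigma(a) - a)(\sigma(a) - a)^{-1} = - p(\sigma^2(a),h)^{-1}\tau, \qquad h.\sigma^2(a) = a, \]
in $a \in \bar k[t]/t^{m+1}$ with $a_0 \notin k$. I would stratify according to the level $\ell(h) = \ell(\tau_0)$, exactly as in Lemma \ref{lm:computing_unipotent_traces}. Unlike the unipotent case, the fixed-point equation $h.\sigma^2(a) = a$ is now a twisted multiplicative condition: its solutions at level $0$ are the Teichm\"uller-like lifts cut out by a quadratic over $k_2$, and the higher coefficients of $a$ are forced to lie in an affine $k$-subspace whose size is controlled by $\ell(h)$. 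The constraint from the first equation then fixes $\tau$ modulo $t^{m+1-\ell(h)}$ (up to sign and a $\sigma$-twist) and determines $\sharp S'_{h,\tau}$ as an explicit power of $q$ times an indicator that $\tau$ is compatible with $h$.

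Plugging this back into the orthogonality sum yields a double sum which I would rearrange as a single sum over $h$ (or equivalently over $\tau_0 \in T_{w,m}$), with integrand a product of $\psi(h^{-1})$, $\chi(\tau)$, and the indicator/power above. The matching of determinants in $S'_{h,\tau}$ produces the condition $\psi|_{Z_m} = \tilde\chi|_{Z_m}$; absent this, everything cancels. Under it, the sum telescopes along the filtration $H_m \supset H_m^1 \supset \cdots \supset H_m^{m+1} = 1$, and on each graded piece $H_m^\ell/H_m^{\ell+1}$ the partial character sum is nonzero only if $\psi$ coincides with $\tilde\chi$ or $\tilde\chi^\sigma$ on $H_m^\ell$, i.e.\ only if $\ell \geq i(\psi)$. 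A sign that alternates by $-1$ when passing from level $\ell$ to $\ell+1$ (arising from the $-1$ in $- p(\sigma^2(a),h)^{-1}\tau$, precisely as in the last step of the proof of Proposition \ref{prop:V_chi_as_U_m-representation}) then produces the claimed $1$ versus $0$ answer according to the parity of $m - i(\psi)$. The main obstacle is the careful bookkeeping in counting $\sharp S'_{h,\tau}$ for torus elements of arbitrary level: once the dependence on $\ell(h)$ is nailed down, the central-character reduction and the telescoping parity argument are essentially formal.
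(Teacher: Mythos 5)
Your high-level skeleton — orthogonality, reduce to traces of single elements of $H_m$ on $V_\chi$, invoke Boyarchenko's trace formula via Lemma \ref{lm:simplifying_the_equations_a_bit}, count solutions, then sum along the filtration — matches the paper's strategy. But your sketch of the hard middle step, which you yourself identify as ``the main obstacle,'' is not just underdeveloped: it points in a direction that would not work, and it omits the structural facts that actually drive the computation.

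First, the ``conjugate by $s$'' step is a red herring. The elements of $H_m$ act on $Y_v^m$ on the left by the \emph{fractional-linear} formula \eqref{eq:general_formula_for_action_on_Yvm}, with $p(x,a)=\det(x)/(x_3a+x_4)$; this is what the paper feeds into Lemma \ref{lm:simplifying_the_equations_a_bit}. A diagonal $\tau_0\in T_{w,m}$ indeed scales $a$, but $\tau_0$ has entries in $k_2$, hence does not lie in $K_m$; replacing $Y_v^m$ by $s^{-1}.Y_v^m$ does not turn the left $H_m$-action into a scaling of $a$ in the original coordinates, and it gives you a different variety whose defining equations you would have to rederive from scratch. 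The paper instead works directly with $x\in H_m$ and never conjugates.

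Second, you do not anticipate the mechanism that actually controls $\sharp S'_{x,\tau}$. The system \eqref{eq:channel_nr_3}--\eqref{eq:channel_nr_4} is cubic in $(a,\sigma(a),\sigma^2(a))$, not a twisted multiplicative condition with Teichm\"uller-type solutions, and after a linear change of variables the count is governed by the reduced characteristic polynomial $p_x(\tau)$ of \eqref{eq:def_of_Mipo_chi_x}: the key structural facts are that $\sharp S'_{x,\tau}=0$ when $v_t(p_x(\tau))$ is odd, and that the size jumps by a factor controlled by the parity of $m-\ell$ when $p_x(\tau)=0$ (Lemma \ref{lm:ugly_sizes_of_nondiag_sets}). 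This even/odd dichotomy in the valuation is what ultimately produces the $(-1)^{m-i(\psi)}$ answer; your proposal attributes the parity to a minus sign inside $-p(\sigma^2(a),g)^{-1}\tau$, which is not where it comes from. Relatedly, you do not predict that the trace of a maximal $x\in H_m$ is $(-1)^{m-\ell(x)+1}q^{\ell(x)}\bigl(\tilde\chi(x)+\tilde\chi^\sigma(x)\bigr)$ (Proposition \ref{prop:traces_of_nondiag_semisimple_elements}); without the $\tilde\chi+\tilde\chi^\sigma$ form the character sums $S_\ell$ would not evaluate correctly.

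Third, you omit the maximality reduction: the paper introduces the notion of a \emph{maximal} element of $H_m$ (Lemma \ref{lm:maximal_elements_characterization}) and only proves the trace formula for such $x$, extending to all of $H_m$ by $\tr(zx;V_\chi)=\chi(z)\tr(x;V_\chi)$. Without this reduction the solution-counting for general $x$ is substantially messier (the argument uses maximality to make the substitution $a\mapsto b$, and Sublemma \ref{sublm:solutions_have_a_0_not_in_k} also relies on it). So: right scaffolding, but the load-bearing lemma — the precise count $\sharp S'_{x,\tau}$ for maximal $x$, organized around $v_t(p_x(\tau))$ — is missing, and the sketch you give of it would not reproduce the answer.
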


We have an immediate consequence.

\begin{cor}\label{cor:xi_chi_determines_chi} Let $m>0$. The character $\tilde{\chi}$ is up to $\sigma$-conjugacy uniquely determined by $\xi_{\chi}$ among all characters of $H_m$ as follows: it is the unique (up to $\sigma$-conjugacy) character $\psi \in H_m^{\vee}$, such that $\psi|_{Z_m}$ is the central character of $\xi_{\chi}$ and
\begin{itemize}
\item[(i)]  if $m$ is odd: $\psi$ occurs in $\xi_{\chi}$, and all characters $\psi^{\prime} \neq \psi$, which coincide with $\psi$ on $Z_m H_m^1$ do not occur in $\xi_{\chi}$.
\item[(ii)]  if $m$ is even: $\psi$ do not occur in $\xi_{\chi}$, and all characters $\psi^{\prime} \neq \psi$, which coincide with $\psi$ on $Z_m H_m^1$ occur in $\xi_{\chi}$.
\end{itemize}
\end{cor}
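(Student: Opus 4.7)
The plan is to derive this corollary as an essentially formal consequence of Theorem \ref{thm:H_m-decomposition_of_pi_chi}, by analysing the parity of $m - i(\psi)$ and twisting candidates $\psi$ by characters of $H_m$ trivial on $Z_m H_m^1$.

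First I would identify the central character of $\xi_{\chi}$ with $\tilde{\chi}|_{Z_m}$. Since $\xi_{\chi}$ is irreducible (Corollary \ref{cor:V_chi_irreducible}) and $Z_m$ lies in the centre of $K_m$, it acts on $V_{\chi}$ by a scalar; as $\dim V_{\chi} > 0$, at least one $H_m$-character $\psi_0$ occurs in $\xi_{\chi}$, and the theorem forces $\psi_0|_{Z_m} = \tilde{\chi}|_{Z_m}$.

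Next I would check that $\tilde{\chi}$ (and by symmetry $\tilde{\chi}^{\sigma}$) satisfies the listed conditions. Since $i(\tilde{\chi}) = 0$, the theorem makes $\tilde{\chi}$ occur in $\xi_{\chi}$ exactly when $m$ is odd, which matches the ``$\psi$ occurs'' half of (i) and the ``$\psi$ does not occur'' half of (ii). For a character $\psi'$ which agrees with $\tilde{\chi}$ on $Z_m H_m^1$ but lies outside the $\sigma$-orbit of $\tilde{\chi}$, one reads off $i(\psi') = 1$: on $H_m^1$ it matches $\tilde{\chi}$, while globally it is neither $\tilde{\chi}$ nor $\tilde{\chi}^{\sigma}$. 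Hence $m - i(\psi') = m - 1$ has opposite parity from $m$, yielding the predicted nonoccurrence in (i) and occurrence in (ii).

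For uniqueness I would argue by contradiction: suppose $\psi \in H_m^{\vee}$ satisfies the conditions but $i(\psi) \geq 1$. The occurrence/nonoccurrence hypothesis, combined with the theorem, forces $m - i(\psi)$ to have the same parity as $m$, so $i(\psi)$ is even and therefore $i(\psi) \geq 2$. Since $H_m/Z_m H_m^1 \cong k_2^{\ast}/k^{\ast}$ has order $q + 1 \geq 3$, I can pick a nontrivial character $\eta$ of $H_m$, trivial on $Z_m H_m^1$, such that $\psi' := \psi\eta$ is neither $\psi$ nor $\psi^{\sigma}$. Then $\psi'$ agrees with $\psi$ on $Z_m H_m^1$; since $\eta|_{H_m^1}$ is trivial, $\psi'$ agrees with $\tilde{\chi}$ (resp.\ $\tilde{\chi}^{\sigma}$) on $H_m^j$ iff $\psi$ does for any $j \geq 1$, and the condition $i(\psi) \geq 2$ (which implies $\psi \neq \tilde{\chi}, \tilde{\chi}^{\sigma}$ on $H_m^1$) together with $\eta$ being trivial on $Z_m H_m^1$ rules out the $j = 0$ case. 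Thus $i(\psi') = i(\psi)$, and the theorem gives $\psi'$ the same occurrence status as $\psi$, contradicting the disjointness clause of (i) or (ii). Therefore $i(\psi) = 0$, so $\psi$ lies in the $\sigma$-orbit of $\tilde{\chi}$. The main (and really only) obstacle is the correct reading of ``$\psi' \neq \psi$'' in the corollary as ``$\psi'$ not $\sigma$-conjugate to $\psi$'': otherwise $\tilde{\chi}^{\sigma}$ itself would violate the statement whenever $\tilde{\chi}$ and $\tilde{\chi}^{\sigma}$ agree on $H_m^1$ but differ on $H_m$, which is exactly the content of the phrase ``up to $\sigma$-conjugacy''.
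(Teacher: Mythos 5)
Your argument is correct and is precisely the parity bookkeeping the paper has in mind in calling the corollary an ``immediate consequence'' of Theorem \ref{thm:H_m-decomposition_of_pi_chi}. One remark on your closing paragraph: the worry there is actually moot, because $\chi$ and $\chi^{\sigma}$ already disagree on $U_E^m$ modulo $U_E^{m+1}$ --- their ratio on $1+xt^m$ equals $\chi\bigl(1+(\sigma(x)-x)t^m\bigr)$, and $\sigma(x)-x$ runs over all of $\ker(\Tr_{k_2/k})$, on which $\chi$ is nontrivial by minimality --- so $\tilde{\chi}$ and $\tilde{\chi}^{\sigma}$ already differ on $H_m^m\subseteq H_m^1$, and $\tilde{\chi}^{\sigma}$ is never a competitor agreeing with $\tilde{\chi}$ on $Z_mH_m^1$. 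Hence the literal reading of ``$\psi'\neq\psi$'' creates no inconsistency, although your proof is valid under either reading.
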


\begin{proof}[Proof of Theorem \ref{thm:H_m-decomposition_of_pi_chi}] If $\psi|_{Z_m} \neq \tilde{\chi}|_{Z_m}$, then $\langle \psi, \xi_{\chi} \rangle_{H_m} = 0$ by Lemma \ref{lm:red_to_adm_pairs}.
If $i(\psi) < m+1$, then $\psi$ coincides with precisely one of the characters $\tilde{\chi}, \tilde{\chi}^{\sigma}$ on $H_m^{i(\psi)}$ and do not coincide with the other even on the last filtration step $H_m^m$ (because $\chi \neq \chi^{\sigma}$ on $T_{w,m,0}^m$). To continue we need the following auxiliary notion.

\begin{Def} 
We say that $x$ \textit{maximal} if $\ell(x) \geq \ell(zx)$ for all $z \in Z_m$.
\end{Def}

\begin{lm}\label{lm:maximal_elements_characterization} 
Let $x = \matzz{x_1}{x_2}{x_3}{x_4} \in H_m \sm \{1\}$. Then $x$ maximal if and only if $v_t(x_3) = \ell(x)$.
\end{lm}
\begin{proof} Assume first $\ell = \ell(x) > 0$. Consider $\tau_x = c_s^{-1}(x) \in T_{w,m}$ and write $\tau_x = 1 + \tau_{x,\ell}t^{\ell} + \dots + \tau_{x,m} t^m$. One sees immediately that maximality of an element is invariant under conjugation, hence $x$ is maximal if and only if $\tau_x$ is, i.e., if and only if $\tau_{x,\ell} \not\in k$. A computation (using the fact that all entries of $s$ must be units) shows that $x_3 = t^{\ell}u(\tau_x - \sigma(\tau_x))$ with some unit $u \in (\bk[t]/t^{m + 1})^{\ast}$. The lemma follows in the case $\ell(x) > 0$. The case $\ell(x) = 0$ is similar.
\end{proof}


\begin{prop} \label{prop:traces_of_nondiag_semisimple_elements}
Let $x \in H_m$ be maximal of level $\ell(x) \leq m$. Then 
\[ \tr(x; V_{\chi}) = (-1)^{m - \ell(x) + 1}q^{\ell(x)} (\tilde{\chi}(x) + \tilde{\chi}^{\sigma}(x)). \]
\end{prop}

We postpone the proof of Proposition \ref{prop:traces_of_nondiag_semisimple_elements} and finish the proof of Theorem \ref{thm:H_m-decomposition_of_pi_chi}. Note that for $x \in H_m, z \in Z_m$ we have $\tr(zx; V_{\chi}) = \chi(z) \tr(x; V_{\chi})$. Let $\psi$ be a character of $H_m$ with $\psi|_{Z_m} = \chi|_{Z_m}$. Note that 

\[ \{x \in H_m \colon \max_{z \in Z_m} \ell(zx) = \ell \} = \begin{cases} Z_mH_m^{\ell} \sm Z_mH_m^{\ell + 1} & \text{if $\ell \leq m$} \\ Z_m & \text{if $\ell = m+1$.} \end{cases} \] 

\noindent As $\psi|_{Z_m} = \chi|_{Z_m}$ and $\tr(z; V_{\chi}) = (q-1)q^m \chi(z)$ for $z \in Z_m$ by Lemma \ref{lm:computing_unipotent_traces}, we have 

\begin{equation}\label{eq:razdrotschka_dla_psi_0_na_podsummy}
\langle \psi, \xi_{\chi} \rangle_{H_m} = \frac{1}{(q^2 - 1)q^{2m}} \sum_{x \in H_m} \psi(x)\tr(x; V_{\chi}) = \frac{1}{(q^2 - 1)q^{2m}} ((q-1)^2q^{2m} + \sum_{\ell = 0}^m S_{\ell}),
\end{equation}

\noindent where 

\begin{eqnarray*}
S_{\ell} &=& \sum_{x \in Z_mH_m^{\ell} \sm Z_mH_m^{\ell + 1}} \psi(x) \tr(x; V_{\chi}) = (-1)^{m-\ell+1}q^{\ell} \sum_{x \in Z_mH_m^{\ell} \sm Z_mH_m^{\ell + 1}} \psi(x) (\tilde{\chi}(x) + \tilde{\chi}^{\sigma}(x)) \\ 
&=& \sharp (Z_m H_m^{\ell}) \langle \psi, \tilde{\chi}+ \tilde{\chi}^{\sigma} \rangle_{Z_mH_m^{\ell}} - \sharp (Z_m H_m^{\ell + 1}) \langle \psi, \tilde{\chi} + \tilde{\chi}^{\sigma} \rangle_{Z_mH_m^{\ell + 1}}.
\end{eqnarray*}

\noindent for $0 \leq \ell \leq m$, by Proposition \ref{prop:traces_of_nondiag_semisimple_elements}. For $\ell \geq 1$ we have $\sharp (Z_m H_m^{\ell}) = (q-1)q^{2m - \ell + 1}$, and we compute: 

\[ S_0 = \begin{cases} (-1)^{m+1} q^{2m+1}(q-1) & \text{if $i(\psi)$ = 0} \\ 
(-1)^m (q-1)q^{2m} & \text{if $i(\psi) = 1$} \\ 
0 & \text{otherwise,}
\end{cases} \]

\noindent and for $0 < \ell < m$: 
\[ S_{\ell} = \begin{cases} (-1)^{m - \ell + 1} (q-1)^2q^{2m} & \text{if $i(\psi) \leq \ell$} \\ 
(-1)^{m-\ell} (q-1) q^{2m} & \text{if $i(\psi) = \ell + 1$} \\ 
0 & \text{otherwise,}
\end{cases} \]

\noindent and 

\[ S_m = \begin{cases} (-1) q^{2m} (q-1)(q-2) & \text{if $i(\psi) \leq m$} \\ 
2(q-1)q^{2m} & \text{if $i(\psi) = m + 1$.}
\end{cases} \]

\noindent The theorem follows if we put these values into \eqref{eq:razdrotschka_dla_psi_0_na_podsummy}.\qedhere
\end{proof} 

Before proving Proposition \ref{prop:traces_of_nondiag_semisimple_elements}, we introduce the following version of the characteristic polynomial of an element $x \in K_m$. Let $\ell = \ell(x)$ be the level of $x$. Let $\tilde{x}$ be some lift of $x$ to $K$. Then the characteristic polynomial of $\tilde{x}$ can be seen as the function 

\[ p_{\tilde{x}} \colon \caO_L \rar \caO_L \quad \lambda \mapsto p_{\tilde{x}}(\lambda) = \det(\lambda \cdot {\rm Id} - x). \]

\noindent Note that $p_{\tilde{x}}(U_L^{\ell}) \subseteq t^{2\ell} \caO_L$. Let now $\lambda \in U_L^{\ell}$ and $\tilde{x}_1, \tilde{x}_2$ two lifts of $x$ to $K$. Then $p_{\tilde{x}_1}(\lambda) - p_{\tilde{x}_2}(\lambda) \in t^{m + \ell + 1} \caO_L$, i.e., $p_{\tilde{x}}(\lambda)$ modulo $t^{m + \ell + 1}$ depends only on $x$, not on the lift $\tilde{x}$. This gives a well-defined map $p_x^{\prime} \colon U_L^{\ell} \rar t^{2 \ell}\caO_L/t^{m + \ell + 1}\caO_L$. Moreover, one immediately computes that this induces the following map defined as the composition:

\begin{equation}\label{eq:def_of_Mipo_chi_x} 
p_x \colon U_L^{\ell}/U_L^{m+1} \stackrel{p_x^{\prime}}\longrar t^{2 \ell}\caO_L/t^{m + \ell + 1}\caO_L \rar \caO_L/t^{m - \ell + 1}\caO_L, 
\end{equation}

\noindent where the second arrow is multiplication by $t^{-2 \ell}$. Explicitly, if $\ell > 0$ and $x = 1 + t^{\ell}\matzz{y_1}{y_2}{y_3}{y_4}$, then

\[ p_x(1 + \tilde{\tau}t^{\ell}) = (\tilde{\tau} - y_1)(\tilde{\tau} - y_4) - y_2 y_3. \]

\begin{proof}[Proof of Proposition \ref{prop:traces_of_nondiag_semisimple_elements}]
We identify $T_{w,m}$ with $(k_2[t]/t^{m+1})^{\ast} \subseteq (\bar{k}[t]/t^{m+1})^{\ast}$ by sending $\matzz{\tau}{}{}{\sigma(\tau)}$ to $\tau$. In particular, for $x \in K_m^{\ell}$ and $\tau \in T_{w,m}^{\ell}$ we have the element $p_x(\tau) \in \bar{k}[t]/(t^{m - \ell + 1})$, where $\ell$ is the level of $x$. Let $S^{\prime}_{x,\tau}$ be as in Lemma \ref{lm:simplifying_the_equations_a_bit}. 

\begin{lm}\label{lm:ugly_sizes_of_nondiag_sets} Let $x \in H_m$ be maximal of level $\ell = \ell(x) \leq m$. Let $\tau \in T_{w,m}$. Then $S^{\prime}_{x,\tau} = \emptyset$, unless $\tau \in T_{w,m}^{\ell}$ and $\det(\tau) = \det(x)$. For $\tau \in T_{w,m}^{\ell}$ with $\det(\tau) = \det(x)$ we have:
\[ \sharp S^{\prime}_{x,\tau} = 
\begin{cases} 
q^{m+\ell} & \text{if $p_x(\tau) = 0$ and $m - \ell$ even} \\
q^{m+\ell + 1} & \text{if $p_x(\tau) = 0$ and $m - \ell$ odd} \\
0 & \text{if $v_t(p_x(\tau)) < \infty$ is odd} \\
(q+1) q^{m+\ell} & \text{if $v_t(p_x(\tau)) < \infty$ is even.}
\end{cases} 
\]
\end{lm}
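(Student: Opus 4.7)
The plan is to follow the same structure as the proof of Lemma~\ref{lm:computing_unipotent_traces}, modified to account for the non-trivial torus action of $x$ on the coordinate $C$, which now feeds into the factor $p(x,a)$ appearing in Lemma~\ref{lm:simplifying_the_equations_a_bit}. The main new input is that the combined system of equations will be shown to collapse, after an appropriate reduction, to the vanishing of the characteristic polynomial map $p_x(\tau)$ introduced in \eqref{eq:def_of_Mipo_chi_x}.

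First, I would compute the left-multiplication action of $x \in H_m$ on the parametrization $\psi_v^m$. By Lemma~\ref{lm:maximal_elements_characterization}, maximality of $x$ is equivalent to $v_t(x_3) = \ell$, so $x_3$ is a unit multiple of $t^\ell$ and $x_3 a + x_4$ is a unit in $\bar k[t]/t^{m+1}$. A direct matrix computation, analogous to that underlying Lemma~\ref{lm:keycomputation}, realises $x$ on coordinates as a Möbius transformation $x.a = (x_1 a + x_2)(x_3 a + x_4)^{-1}$ together with a unit scaling of $C$ by a rational expression $p(x,a)$ in $a$ essentially given by $x_3 a + x_4$.

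Second, I would substitute these formulas into the equations \eqref{eq:simplified_eq_for_S_xtau} defining $S'_{x,\tau}$. Applying $\det^m$ to the multiplicative equation forces $\tau \sigma(\tau) = \det(x)$ in $k_2[t]/t^{m+1}$; since $\det(x) \equiv 1 \mod t^\ell$ by the definition of level, this pins $\tau$ into $T_{w,m}^\ell$ and rules out emptiness claims of the lemma. For admissible $\tau$, setting $s = \sigma(a) - a$ as in the unipotent case, the first equation becomes a multiplicative constraint on $\sigma(s)/s$ modulo $t^{m+1}$, and the second equation becomes the Möbius fixed-point condition $(x_3 \sigma^2(a) + x_4) a = x_1 \sigma^2(a) + x_2$.

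Third, the key step is to eliminate $s$ and reduce the remaining Möbius condition modulo $t^{m-\ell+1}$ (after extracting a factor $t^{2\ell}$) to an equation whose left-hand side is precisely $p_x(\tau)$ and whose right-hand side is a Frobenius-difference expression in the higher coefficients of $a$. Explicitly, writing $\tau = 1 + \tilde\tau t^\ell$ and $x = 1 + t^\ell \matzz{y_1}{y_2}{y_3}{y_4}$ (after conjugating into a convenient form), the combined equation should have the shape $(\tilde\tau - y_1)(\tilde\tau - y_4) - y_2 y_3 \equiv F(a) \mod t^{m-\ell+1}$, where $F(a)$ is a Frobenius-difference polynomial in the coordinates $a_\ell, \dots, a_m$. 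The left-hand side is $p_x(\tau)$ by the explicit formula immediately preceding the proposition. The final counting splits into the stated four cases according to the $t$-adic behaviour of $p_x(\tau)$: if $p_x(\tau) = 0$, the equation is unobstructed and the remaining count reduces to an Artin--Schreier-type system whose solvability at the top coefficient depends on the parity of $m-\ell$ (yielding $q^{m+\ell}$ or $q^{m+\ell+1}$, exactly as for $Z^m_1$ in the proof of Theorem~\ref{thm:coh_of_Yvm}); if $p_x(\tau) \neq 0$ with $v_t(p_x(\tau)) = \mu$, an Artin--Schreier obstruction at the level $\mu$ kills all solutions when $\mu$ is odd, and when $\mu$ is even the $(q+1)$-factor arises from the two-dimensional fibre over the critical level.

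The main obstacle will be the precise identification of the combined residual equation with $p_x(\tau)$: tracking how the Möbius fixed-point condition interacts with the Frobenius-difference variable $s$ and with the unit factor $p(x,\sigma^2(a))$, and verifying that after the coordinate changes all twists collapse to exactly the characteristic-polynomial expression of \eqref{eq:def_of_Mipo_chi_x}. Once this identification is made, the four-case count follows from a direct Artin--Schreier analysis in the spirit of the unipotent computation of Lemma~\ref{lm:computing_unipotent_traces}.
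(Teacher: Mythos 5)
Your overall framework matches the paper's: use Lemma~\ref{lm:simplifying_the_equations_a_bit} to reduce to counting solutions of equations in the coordinate $a$, exploit the M\"obius action of $x$ on $a$ and its unit scaling of $C$, reduce modulo $t^{m-\ell+1}$, and split into cases according to the $t$-adic valuation of $p_x(\tau)$. However, your description of the key reduced equation is off in a way that would derail the case analysis. You claim the combined equation has the form $p_x(\tau) \equiv F(a) \bmod t^{m-\ell+1}$ with $F$ a Frobenius-difference polynomial, but that is not the structure. After the shift $a = b - (\tilde\tau - y_1)/y_3$ (which uses maximality so that $y_3$ is a unit), the binding equation is a \emph{quadratic (norm-type) constraint on $b$ with $Q = p_x(\tau)$ appearing as a coefficient}, roughly $y_3\, b\,\sigma(b) - t^\ell \tau^{-1}Q\,b + y_3^{-1}Q \equiv 0 \bmod t^{m-\ell+1}$, together with a separate Frobenius-type equation handling the coefficients $a_0,\dots,a_{\ell-1}$ (contributing the factor $q^{2\ell}$).

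This distinction matters because the mechanisms you propose for the case split are wrong. The emptiness for $v_t(p_x(\tau))$ odd is not an Artin--Schreier obstruction: it follows because any solution must satisfy $v_t(Q) = v_t(b\,\sigma(b)) = 2\,v_t(b)$, which is always even. Likewise, the $(q+1)$ in the even case is not a ``two-dimensional fibre'' but comes from solving a norm equation of the shape $b_j^{q+1} = \text{unit}$, which has exactly $q+1$ roots in $\bar k$. The parity of $m-\ell$ in the case $p_x(\tau)=0$ enters because the norm constraint forces $b$ to vanish modulo $t^{\lfloor (m-\ell)/2\rfloor + 1}$, and the remaining count from the other equation contributes $(q^2)^{m-\ell-\lfloor (m-\ell)/2\rfloor}$; the floor introduces the parity dependence. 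Finally, you omit two necessary ingredients the paper supplies: an analogue of Sublemma~\ref{sublm:eine_Gl_hat_richtig_viele_Lsgn} to propagate solutions of the second equation to higher $t$-adic precision, and the verification (Sublemma~\ref{sublm:solutions_have_a_0_not_in_k}) that all solutions produced have $a_0 \notin k$, so they actually lie in $Y_v^m$ -- this requires a separate, characteristic-dependent argument. Your outline also does not address the level-$0$ case, which needs its own (analogous) treatment.
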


We postpone the proof of Lemma \ref{lm:ugly_sizes_of_nondiag_sets} and finish the proof of Proposition \ref{prop:traces_of_nondiag_semisimple_elements}. Let $x \in H_m$ be maximal of level $\ell \leq m$ and let $\tau_x = c_s^{-1}(x) \in T_{w,m}^{\ell}$. For $j^{\prime} \in \{0, 1, \dots, m-\ell, \infty \}$, let 

\begin{eqnarray*} 
T_x(j^{\prime}) &=& \{ \tau \in \tau_x T_{w,m,0}^{\ell} \cup \sigma(\tau_x)T_{w,m,0}^{\ell} \colon \tau \equiv \tau_x \text{ or } \sigma(\tau_x) \mod t^{\ell + j^{\prime} } \} \\
 &=& \tau_x \ker(T_{w,m,0}^{\ell} \tar T_{w,\ell+j^{\prime} - 1,0}^{\ell} ) \cup \sigma(\tau_x) \ker(T_{w,m,0}^{\ell} \tar T_{w,\ell+j^{\prime} - 1,0}^{\ell} ) \subseteq T_{w,m}^{\ell} 
\end{eqnarray*}

\noindent be the union of the two $\ker(T_{w,m,0}^{\ell} \tar T_{w,\ell+j^{\prime} - 1,0}^{\ell} )$-cosets inside $T_{w,m}^{\ell}$ in which $\tau_x$ and $\sigma(\tau_x)$ lie (note that these cosets are disjoint if $j^{\prime} > 0$ and equal if $j^{\prime} = 0$). Note that $\tau \in T_x(0)$ if and only if $\det(\tau) = \det(x)$ and $\tau \in T_{w,m}^{\ell}$.

\begin{sublm}\label{sublm:relation_vtA_Txj} For $\tau \in T_{w,m}^{\ell}$ with $\det(\tau) = \det(x)$ we have:
$\tau \in T_x(j^{\prime}) \LRar v_t(p_x(\tau)) \geq j^{\prime}$. 
\end{sublm}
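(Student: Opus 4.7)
The plan is to reduce the equivalence to a direct comparison of $t$-adic valuations in $\bar{k}[t]/t^{m-\ell+1}$. I would first establish an explicit factorisation of $p_x(\tau)$. Writing $\tau = 1 + t^\ell \tilde\tau$ and $\tau_x = 1 + t^\ell \tilde\tau_x$ in the case $\ell \geq 1$ (the case $\ell = 0$ is handled analogously by direct substitution), and using that $x$ is conjugate to $\mathrm{diag}(\tau_x, \sigma(\tau_x))$, the relations $\mathrm{tr}(x) = \tau_x + \sigma(\tau_x)$ and $\det(x) = \tau_x \sigma(\tau_x)$, together with the explicit formula for $p_x$ displayed just before the sublemma, would produce
\[ p_x(\tau) \;=\; (\tilde\tau - \tilde\tau_x)(\tilde\tau - \sigma(\tilde\tau_x)) \qquad \text{in } \bar{k}[t]/t^{m-\ell+1}. \]

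Next I would exploit maximality of $x$ to observe that $\tilde\tau_x - \sigma(\tilde\tau_x)$ is a unit in $\bar{k}[t]/t^{m-\ell+1}$: by Lemma~\ref{lm:maximal_elements_characterization}, maximality of $x$ corresponds to $\tau_{x,\ell} \in k_2 \setminus k$, so the constant term $\tau_{x,\ell} - \sigma(\tau_{x,\ell})$ of $\tilde\tau_x - \sigma(\tilde\tau_x)$ is non-zero. Since the two factors $\tilde\tau - \tilde\tau_x$ and $\tilde\tau - \sigma(\tilde\tau_x)$ differ by this unit, at most one of them can have positive $t$-adic valuation.

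With these two inputs both implications become routine. For $j' = 0$ both sides of the equivalence are automatic, since $\det(\tau) = \det(x)$ together with $\tau \in T_{w,m}^\ell$ already forces $\tau$ into the union of cosets defining $T_x(0)$. For $j' > 0$: if $\tau \in T_x(j')$, then $\tilde\tau \equiv \tilde\tau_x$ or $\sigma(\tilde\tau_x) \mod t^{j'}$, so the corresponding factor contributes valuation $\geq j'$ while the other is forced to be a unit, giving $v_t(p_x(\tau)) \geq j'$; conversely, $v_t(p_x(\tau)) \geq j' > 0$ forces exactly one factor to absorb the full valuation and the other to remain a unit, yielding $\tau \equiv \tau_x$ or $\sigma(\tau_x) \mod t^{\ell+j'}$, i.e.\ $\tau \in T_x(j')$.

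The main obstacle is establishing the factorisation of $p_x(\tau)$ correctly in the truncated polynomial ring, keeping track of the twist by $t^{2\ell}$ appearing in the definition~\eqref{eq:def_of_Mipo_chi_x} and unifying the $\ell = 0$ and $\ell \geq 1$ cases; once this is in place, the dichotomy ``exactly one factor is a unit'' drives the remainder of the argument with no further difficulty.
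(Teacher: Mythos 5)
Your proposal is correct and follows essentially the same argument as the paper: factor $p_x(\tau)=(\tilde\tau-\tilde\tau_x)(\tilde\tau-\sigma(\tilde\tau_x))$, observe via maximality of $x$ that the two factors differ by the unit $\tilde\tau_x-\sigma(\tilde\tau_x)$ so at most one can be a non-unit, and read off the valuation equivalence. The only cosmetic difference is that you obtain the factorisation through explicit trace/determinant identities, whereas the paper invokes conjugation-invariance of the characteristic polynomial to replace $p_x$ by $p_{\tau_x}$ directly; these are the same observation phrased two ways.
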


\begin{proof}[Proof of Sublemma \ref{sublm:relation_vtA_Txj}]
Write $\tau = 1 + t^{\ell}\tilde{\tau}$ and $\tau_x = c_s^{-1}(x) = 1 + t^{\ell}\tilde{\tau}_x$. The characteristic polynomial is invariant under conjugation, hence $v_t(p_x(\tau)) = v_t(p_{\tau_x}(\tau))$. Write $\tau_x = 1 + t^{\ell}\tilde{\tau}_x$. As $x$ (and hence also $\tau_x$) is maximal, $\tilde{\tau}_x - \sigma(\tilde{\tau}_x)$ is a unit. We have $p_{\tau_x}(\tau) = (\tilde{\tau} - \tilde{\tau}_x)(\tilde{\tau} - \sigma(\tilde{\tau}_x))$. Thus $v_t(p_x(\tau)) \geq j^{\prime} \LRar \tilde{\tau} \equiv \tilde{\tau}_x \mod t^{j^{\prime}}$ or $\tilde{\tau} \equiv \sigma(\tilde{\tau}_x) \mod t^{j^{\prime}}$. The sublemma follows.
\end{proof}

By Lemma \ref{lm:simplifying_the_equations_a_bit} and the first statement of Lemma \ref{lm:ugly_sizes_of_nondiag_sets} we have:

\begin{eqnarray} 
\tr(x; V_{\chi}) &=& \frac{1}{q^{m+1}} \sum_{\substack{\tau \in T_{w,m}^{\ell} \\ \det(\tau) = \det(x)}} \chi(\tau) \sharp S_{x,\tau}^{\prime} \label{eq:tr_x_semisimple_nondiag} \\ 
&=& \frac{1}{q^{m+1}} (\sum_{\tau \in T_x(\infty)} \chi(\tau) \sharp S_{x,\tau}^{\prime} + \sum_{j^{\prime} = 0}^{m-\ell} \sum_{\substack{\tau \in T_x(j^{\prime}) \\ \tau \not\in T_x(j^{\prime}+ 1) } } \chi(\tau) \sharp S_{x,\tau}^{\prime}). \nonumber
\end{eqnarray}

\noindent Write $T_{w,m,0}^{\ell} = T_{w,m,0} \cap T_{w,m}^{\ell}$. Lemma \ref{lm:ugly_sizes_of_nondiag_sets} implies for $0 < j^{\prime} < m-\ell$: 

\begin{eqnarray*} 
\sum_{\substack{\tau \in T_x(j^{\prime}) \\ \tau \not\in T_x(j^{\prime}+ 1) } } \chi(\tau) \sharp S_{x,\tau}^{\prime} &=& (\tilde{\chi}(x) + \tilde{\chi}^{\sigma}(x)) \cdot (const) \cdot \sum_{\substack{ \tau \in \ker(T_{w,m,0}^{\ell} \tar T_{w,l+j^{\prime} - 1,0}^{\ell} ) \\  \tau \not\in \ker(T_{w,m,0}^{\ell} \tar T_{w,l+j^{\prime} ,0}^{\ell} ) } } \chi(\tau) \\
&=&  (\tilde{\chi}(x) + \tilde{\chi}^{\sigma}(x)) \cdot (const) \cdot \sum_{\tau \in \ker(T_{w,m,0}^{\ell} \tar T_{w,l+j^{\prime} ,0}^{\ell} )} \chi(\tau) = 0
\end{eqnarray*}

\noindent and similarly $\sum_{\substack{\tau \in T_x(0) \\ \tau \not\in T_x(1) } } \chi(\tau) \sharp S_{x,\tau}^{\prime} = 0$ as $\chi$ is non-trivial on $\ker(T_{w,m,0}^{\ell} \tar T_{w,l+j^{\prime} ,0})$ (one has to apply this twice). Further, if $m - \ell$ is odd, then $S_{x,\tau}^{\prime}$ is empty for $\tau \in T_x(m-\ell) \sm T_x(\infty)$, hence in this case Lemma \ref{lm:ugly_sizes_of_nondiag_sets} implies:

\[ \tr(x; V_{\chi}) = \frac{1}{q^{m+1}} \sum_{\tau \in T_x(\infty)} \chi(\tau) \sharp S_{x,\tau}^{\prime} = q^{\ell} (\chi(\tau_x) + \chi^{\sigma}(\tau_x)). \]

\noindent If $m - \ell$ is even, then 

\begin{eqnarray*} 
\tr(x; V_{\chi}) &=& \frac{1}{q^{m+1}} \left( \sum_{\tau \in T_x(\infty)} \chi(\tau) q^{m + \ell} + (\chi(\tau_x) + \chi^{\sigma}(\tau_x)) \sum_{\tau \in  T_{w,m,0}^m \sm \{1\} } \chi(\tau) (q+1)q^{m+\ell} \right) \\ 
&=& -q^{\ell}(\chi(\tau_x) + \chi^{\sigma}(\tau_x)). 
\end{eqnarray*}

\noindent This finishes the proof of Proposition \ref{prop:traces_of_nondiag_semisimple_elements}. \qedhere
\end{proof} 


\begin{proof}[Proof of Lemma \ref{lm:ugly_sizes_of_nondiag_sets}] Let $x \in H_m$ be maximal of level $\ell \leq m$ and let $\tau_x = c_s^{-1}(x)$. Let $\tau \in T_{w,m}$. From the definition of $S_{x,\tau}^{\prime}$ one immediately deduces that $S_{x,\tau}^{\prime} = \emptyset$, unless $\det(x) = \det(\tau)$, i.e., $\tau \in \tau_x T_{w,m,0} = T_x(0)$. Hence we can assume $\tau \sigma(\tau) = \det(x)$. Write $x = \matzz{x_1}{x_2}{x_3}{x_4}$. 
A point of $Y_v^m$ is parametrized by the coordinates $a,C$ and $A$ as above. One computes:

\begin{equation} \label{eq:general_formula_for_action_on_Yvm}
x.(a,C,A) = (x.a,x.C,x.A) = (\frac{x_1a + x_2}{x_3a + x_4}, \frac{\det(x)C}{x_3a + x_4}, A). 
\end{equation}

\noindent By Lemma \ref{lm:simplifying_the_equations_a_bit}, $\sharp S_{x,\tau}^{\prime}$ is the number of solutions of equations \eqref{eq:simplified_eq_for_S_xtau} in the variable $a \in \bk[t]/t^{m+1}$ (satisfying $a_0 \not\in k$). Explicitly, these equations are:
\begin{eqnarray*} 
x_1 \sigma^2(a) + x_2 &=& a(x_3 a \sigma^2(a) + x_4) \\
(\sigma^2(a) - \sigma(a)) \sigma(\tau) &=& -(x_3 \sigma^2(a) + x_4)(\sigma(a) - a).
\end{eqnarray*}

\noindent Inserting the first equation into the second and applying $\sigma^{-1}$ to the result, we see that the equations are equivalent to

\begin{eqnarray}
x_3 a \sigma^2(a) - x_1 \sigma^2(a) + x_4 a - x_2 &=& 0 \label{eq:channel_nr_3} \\
x_3 a \sigma(a) + (\tau - x_1) \sigma(a) - (\tau - x_4) a - x_2 &=& 0. \label{eq:channel_nr_4}
\end{eqnarray}

\begin{sublm}\label{sublm:eine_Gl_hat_richtig_viele_Lsgn}
For $i \geq 1$, there are precisely $q^2$ solutions of equation \eqref{eq:channel_nr_3} modulo $t^{i+1}$ lying over a given solution (satisfying $a_0 \not\in k$) of \eqref{eq:channel_nr_3} modulo $t^i$.
\end{sublm}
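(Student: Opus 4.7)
The plan is to lift a given solution $a$ of \eqref{eq:channel_nr_3} modulo $t^i$ (with $a_0 \notin k$) to a solution modulo $t^{i+1}$, by writing $a' = a + \alpha t^i$ for a free parameter $\alpha \in \bar{k}$, and to count the admissible $\alpha$. Let $F(a)$ denote the left-hand side of \eqref{eq:channel_nr_3}. Since $\sigma^2(a') = \sigma^2(a) + \alpha^{q^2} t^i$ and $i \geq 1$ ensures $t^{2i} \equiv 0 \pmod{t^{i+1}}$, a direct expansion yields
\[ F(a') \equiv F(a) + t^i\bigl( A_0 \alpha^{q^2} + B_0 \alpha \bigr) \pmod{t^{i+1}}, \]
where $A_0 = x_{3,0} a_0 - x_{1,0}$ and $B_0 = x_{3,0} a_0^{q^2} + x_{4,0}$ are the $t^0$-coefficients of $x_3 a - x_1$ and of $x_3 \sigma^2(a) + x_4$, respectively. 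Writing $F(a) \equiv c t^i \pmod{t^{i+1}}$, which is well-defined since $F(a) \equiv 0 \pmod{t^i}$, the lifting condition reduces to the single additive polynomial equation
\[ A_0 \alpha^{q^2} + B_0 \alpha + c = 0 \qquad \text{in } \alpha \in \bar{k}. \]

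The main step, and the only real obstacle, is to show that $A_0 \neq 0$ and $B_0 \neq 0$. The coefficients $x_{j,0}$ lie in $k$, because $x \in K_m = G(\caO_F/t^{m+1})$. If $A_0 = 0$ and $x_{3,0} \neq 0$, then $a_0 = x_{1,0}/x_{3,0} \in k$, contradicting $a_0 \notin k$; if $x_{3,0} = 0$, then $A_0 = 0$ forces $x_{1,0} = 0$ as well, so the first column of $x$ vanishes modulo $t$, contradicting $\det(x) \in (k[t]/t^{m+1})^\ast$. The same reasoning applied to $B_0$, using that $a_0^{q^2} \in k$ would force $a_0 \in k$ (as $x \mapsto x^{q^2}$ is bijective on $\bar{k}$), yields $B_0 \neq 0$.

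With $A_0 \neq 0$ and $B_0 \neq 0$ the polynomial $A_0 \alpha^{q^2} + B_0 \alpha + c$ has degree $q^2$, and its formal derivative with respect to $\alpha$ equals $B_0$ (the $q^2$-th power term is annihilated because $p \mid q$); it is therefore separable, so it splits into $q^2$ distinct linear factors over $\bar{k}$, producing exactly $q^2$ admissible values of $\alpha$. It is worth emphasising that neither the maximality of $x$ nor the precise form of $H_m$ enters the argument; the sublemma holds uniformly in the level $\ell(x)$, and uses only $x \in \GL_2$ together with $a_0 \notin k$.
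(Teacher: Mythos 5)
Your proposal is correct and follows essentially the same route as the paper: expand the coefficient of $t^i$ in the left-hand side of \eqref{eq:channel_nr_3} as the additive $q^2$-polynomial $A_0\alpha^{q^2} + B_0\alpha + c$ in the new free coordinate, observe that $a_0 \notin k$ and $x \in G(k)$ force $A_0 \neq 0$ and $B_0 \neq 0$, and conclude separability and hence exactly $q^2$ roots. The only difference is cosmetic: you spell out the case analysis ($x_{3,0}$ zero or not, and the bijectivity of $\Frob_q^2$) showing $A_0, B_0 \neq 0$, where the paper simply declares this clear.
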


\begin{proof}
Write $a = \sum_{j=0}^i a_j t^j$, $x_{\lambda} = \sum_{j=0}^i x_{\lambda j}$. The coefficient of $t^i$ on the right side of \eqref{eq:channel_nr_3} modulo $t^{i+1}$ is

\begin{equation}\label{eq:um_die_anzahl_der_lsgn_nur_einer_gl_zu_erhalten} (x_{30} a_0 - x_{10})a_i^{q^2} + (x_{30} a_0^{q^2} + x_{40})a_i + R, \end{equation}

\noindent where $R \in \bk$ depends only on $a_0,\dots, a_{i-1}$ and $x$ and not on $a_i$. As $a_0 \not\in k$ and $x \in G(k)$, it is clear that $x_{30} a_0 - x_{10} \neq 0$ and $x_{30} a_0^{q^2} + x_{40} \neq 0$. Thus \eqref{eq:um_die_anzahl_der_lsgn_nur_einer_gl_zu_erhalten} is a separable polynomial in $a_i$ of degree $q^2$, i.e., it has exactly $q^2$ different roots.
\end{proof}

\noindent Now we concentrate on the case $\ell > 0$, i.e., $x = 1 + t^{\ell}\matzz{y_1}{y_2}{y_3}{y_4}$. Equation \eqref{eq:channel_nr_4} modulo $t^{\ell}$ shows $(\tau - 1) \sigma(a) = (\tau - 1) a \mod t^{\ell}$. If $\tau \not\equiv 1 \mod t^{\ell}$, then this forces $a_0 \in k$, which contradicts $(a,c,A) \in Y_v^m$. Hence $S_{x,\tau}^{\prime} = \emptyset$, unless $\tau \in T_{w,m}^{\ell}$. Assume $\tau \in T_{w,m}^{\ell}$. and $\tau = 1 + \tilde{\tau}t^{\ell}$, with some $\tilde{\tau} \in k_2[t]/t^{m+1-\ell}$. Note that the condition $\det(x) = \det(\tau)$ satisfied by $x,\tau$ is equivalent to

\begin{equation}\label{eq:det_condition_forx_and_tau} 
y_1 + y_4 + (y_1y_4 - y_2y_3)t^{\ell} \equiv \tilde{\tau} + \sigma(\tilde{\tau}) + \tilde{\tau}\sigma(\tilde{\tau})t^{\ell} \mod t^{m - \ell +1}.  
\end{equation}

\noindent Equations \eqref{eq:channel_nr_3} and \eqref{eq:channel_nr_4} transform to

\begin{eqnarray}
t^{\ell}(y_3 a \sigma^2(a) - y_1 \sigma^2(a) + y_4 a - y_2) &=& \sigma^2(a) - a \label{eq:channel_nr_3strich} \\
y_3 a \sigma(a) + (\tilde{\tau} - y_1) \sigma(a) - (\tilde{\tau} - y_4) a - y_2 &\equiv& 0 \mod t^{m-\ell + 1} \label{eq:channel_nr_4strich}
\end{eqnarray}

\noindent Sublemma \ref{sublm:eine_Gl_hat_richtig_viele_Lsgn} shows that the number of solutions of \eqref{eq:channel_nr_3strich}, \eqref{eq:channel_nr_4strich} is equal to $q^{2\ell}$ times the number of solutions of \eqref{eq:channel_nr_3strich} and \eqref{eq:channel_nr_4strich} $\mod t^{m-\ell + 1}$.

Let us write $Q = p_x(\tau)$ with $p_x$ as in \eqref{eq:def_of_Mipo_chi_x}. A computation involving \eqref{eq:det_condition_forx_and_tau} implies
\begin{equation} \label{eq:B_A_Vergleichsformel} 
\tilde{\tau} + \sigma(\tilde{\tau}) - y_1 - y_4 \equiv t^{\ell}\tau^{-1}Q \mod t^{m-\ell + 1}.
\end{equation}

\noindent Sublemma \ref{lm:maximal_elements_characterization} allows us to make the linear change of variables $a = b - \frac{\tilde{\tau} - y_1}{y_3}$ and equations \eqref{eq:channel_nr_3strich}, \eqref{eq:channel_nr_4strich} modulo $t^{m-\ell + 1}$ take the following form (using \eqref{eq:B_A_Vergleichsformel} and the fact that $\sigma^2(a) - a = \sigma^2(b) - b$):

\begin{eqnarray}
\quad t^{\ell}\left( y_3 b \sigma^2(b) - \tilde{\tau} \sigma^2(b)  - (t^{\ell} \tau^{-1} Q - \sigma(\tilde{\tau}) ) b + y_3^{-1}Q \right) &\equiv& \sigma^2(b) - b \mod t^{m-\ell + 1} \label{eq:channel_nr_3strich_mit_b} \\
y_3 b \sigma(b) - t^{\ell}\tau^{-1}Q b + y_3^{-1} Q &\equiv& 0 \mod t^{m-\ell + 1}. \label{eq:channel_nr_4strich_mit_b}
\end{eqnarray}

\noindent Write $b = \sum_{i=0}^m b_i t^i$. We have three cases: $v_t(Q) = \infty$,  $v_t(Q) < \infty$ odd, $v_t(Q) < \infty$ even. Assume first $v_t(Q) = \infty$, i.e., $Q = 0$.  Then \eqref{eq:channel_nr_4strich_mit_b} is equivalent to $b_0 = b_1 = \dots = b_{ \lfloor \frac{m - \ell}{2} \rfloor } = 0$. As $b = 0$ is also a solution of \eqref{eq:channel_nr_3strich_mit_b} $\mod t^{\lfloor \frac{m - \ell}{2} \rfloor + 1}$, it follows from Sublemma \ref{lm:maximal_elements_characterization} that the number of solutions of \eqref{eq:channel_nr_3strich_mit_b} and \eqref{eq:channel_nr_4strich_mit_b} $\mod t^{m - \ell + 1}$ is exactly $(q^2)^{m - \ell - \lfloor \frac{m - \ell}{2} \rfloor}$ and the lemma follows in this case, once we have shown that no of these solutions lies in the 'forbidden' subset, determined by $a_0 \in k$. This is done in Sublemma \ref{sublm:solutions_have_a_0_not_in_k} below.

Now assume $v_t(Q) < \infty$.  Equation \eqref{eq:channel_nr_4strich_mit_b} shows that we must have $v_t(Q) = 2v_t(b)$. In particular, $\sharp S_{x,\tau}^{\prime} = \emptyset$ if $v_t(Q)$ is odd. Assume $v_t(Q) = 2j < \infty$ is even and write $Q = t^{2j} Q^{\prime}$. Then $b \in \bar{k}[t]/t^{m - \ell + 1}$ solves \eqref{eq:channel_nr_4strich_mit_b} if and only if $b = t^j b^{\prime}$ (i.e., $b_0 = \dots = b_{j-1} = 0$) and $b^{\prime} = \sum_{i = j}^{m - \ell + 1} b_i t^{i - j}$ solves 

\begin{equation} \label{eq:channel_nr_4strich_mit_b_prime}
y_3 b^{\prime} \sigma(b^{\prime}) - t^{\ell + j}\tau^{-1}Q^{\prime} b^{\prime} + y_3^{-1} Q^{\prime} \equiv 0 \mod t^{m - \ell - 2j + 1}. 
\end{equation}

\noindent Note that such a solution $b^{\prime}$ is necessarily a unit. Using this, we can express $\sigma(b^{\prime})$ in terms of $b^{\prime}$, apply $\sigma$ to it, and then insert again the expression of $\sigma(b^{\prime})$ in \eqref{eq:channel_nr_4strich_mit_b_prime}. This shows:

\[ \sigma^2(b^{\prime}) \equiv \frac{\sigma(Q^{\prime})}{ \frac{ Q^{\prime} }{ b^{\prime} } - y_3 t^{\ell+j}\tau^{-1}Q^{\prime} } + y_3^{-1} t^{\ell + j} \sigma(\tau^{-1}Q^{\prime}) \, (\mkern-18mu\mod t^{m - \ell - 2j + 1}), \]

\noindent which multiplied by $t^j$ gives an expression of $\sigma^2(b) \mod t^{m - \ell - j + 1}$ in terms of $b^{\prime}$. Now a (very ugly, but straightforward) computation shows that if we put this expression for $\sigma^2(b)$ into equation \eqref{eq:channel_nr_3strich_mit_b} modulo $t^{m - \ell - j + 1}$, we obtain the tautological equation $0=0$. This simply means that any solution $b$ of \eqref{eq:channel_nr_4strich_mit_b} $\mod t^{m - \ell - j + 1}$ is a solution of \eqref{eq:channel_nr_3strich_mit_b}  $\mod t^{m - \ell - j + 1}$. Similarly as in Sublemma \ref{sublm:eine_Gl_hat_richtig_viele_Lsgn}, one checks that \eqref{eq:channel_nr_4strich_mit_b} modulo $t^{m - \ell - j + 1}$ has precisely $(q+1)q^{m - \ell - 2j}$ solutions ($q+1$ corresponds to the freedom of choosing $b_j$ and $q^{m - \ell - 2j}$ corresponds to the freedom of choosing $b_{j+1}, \dots, b_{m - \ell - j}$). Again by Sublemma \ref{lm:maximal_elements_characterization}, the lemma also follows in this case, once we have shown that no 
of these solutions lie in the 'forbidden' subset, 
determined by $a_0 \in 
k$. This is done in Sublemma \ref{sublm:solutions_have_a_0_not_in_k}.

In the case $\ell(x) = 0$, the lemma can be proven in the same way.
\end{proof}

\begin{sublm} \label{sublm:solutions_have_a_0_not_in_k}
With notations as in the proof of Lemma \ref{lm:ugly_sizes_of_nondiag_sets}, assume $\tau \in T_{w,m}^{\ell}$ and $\det(\tau) = \det(x)$. Let $a$ be a solution of equations \eqref{eq:channel_nr_3}, \eqref{eq:channel_nr_4}, then $a_0 \not\in k$. 
\end{sublm}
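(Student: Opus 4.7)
The plan is to argue by contradiction: assume $a_0 \in k$, extract a polynomial constraint on $a_0$ over $k$ from \eqref{eq:channel_nr_4}, and show via maximality of $x$ that this constraint has no root in $k$.

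First I would split into the two cases $\ell = \ell(x) \geq 1$ and $\ell = 0$. In the former, I write $x = 1 + t^\ell \matzz{y_1}{y_2}{y_3}{y_4}$ and $\tau = 1 + \tilde\tau t^\ell$, and examine the coefficient of $t^\ell$ in \eqref{eq:channel_nr_4}. Using $\sigma(a_0) = a_0^q = a_0$ for $a_0 \in k$, the $\tilde\tau_0$-contributions cancel and one obtains the quadratic
\[ y_{30} a_0^2 + (y_{40} - y_{10}) a_0 - y_{20} = 0 \]
over $k$. The case $\ell = 0$ is handled analogously by reducing \eqref{eq:channel_nr_4} modulo $t$, yielding $x_{30} a_0^2 + (x_{40} - x_{10}) a_0 - x_{20} = 0$.

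Next I would show that this quadratic has a root in $k$ if and only if the characteristic polynomial of the corresponding matrix (namely $Y_0 := Y \mod t$ for $\ell \geq 1$, or $x \mod t$ for $\ell = 0$) has a root in $k$. In characteristic $\neq 2$ this is immediate from the shared discriminant $(y_{10} - y_{40})^2 + 4 y_{20} y_{30}$. In characteristic $2$ one instead invokes the Artin-Schreier criterion: the two quadratics share the same linear coefficient, and the identity $\Tr_{k/\bF_2}\bigl(y_{10} y_{40}/(y_{10}+y_{40})^2\bigr) = 0$ (a direct consequence of $\Tr(u) = \Tr(u^2)$ on $k$) yields the equivalence.

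Finally I would invoke the maximality of $x$ to conclude that this characteristic polynomial is in fact irreducible over $k$, ruling out $a_0 \in k$. For $\ell \geq 1$: if $\tilde\tau_0 \in k$, then $z = (1 - t^\ell \tilde\tau_0) I \in Z_m$ would satisfy $\ell(zx) \geq \ell + 1$, contradicting maximality; hence $\tilde\tau_0 \in k_2 \sm k$, and these are exactly the eigenvalues of $Y_0$. For $\ell = 0$: maximality amounts to $x_{30} \neq 0$ by Lemma \ref{lm:maximal_elements_characterization}, so $x \mod t$ is a non-scalar element of the non-split torus $H_m \mod t$ in $\GL_2(k)$, forcing its eigenvalues into $k_2 \sm k$. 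In characteristic $2$ these same arguments also force $y_{10} + y_{40} \neq 0$ (resp.\ $x_{10} + x_{40} \neq 0$), which is precisely what makes the Artin-Schreier step applicable. I expect the characteristic-$2$ bookkeeping to be the main technical subtlety, since the clean discriminant comparison is unavailable there.
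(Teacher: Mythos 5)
Your proposal is correct and takes a genuinely different, cleaner route than the paper's proof. The paper changes variables to $b = a + \frac{\tilde\tau - y_1}{y_3}$ and then splits into the cases $v_t(Q)>0$ (where $b_0 = 0$ and $\tilde\tau_0 \notin k$ give the conclusion immediately) and $v_t(Q)=0$ (where it juggles the pair of equations $b_0^q = b_0 + \frac{\tilde\tau_0^q-\tilde\tau_0}{y_{30}}$ and $b_0^2 + \frac{\tilde\tau_0^q-\tilde\tau_0}{y_{30}}b_0 + \frac{Q_0}{y_{30}^2}=0$, with separate discriminant and Artin--Schreier computations). You instead keep the original variable $a$, reduce \eqref{eq:channel_nr_4} modulo $t$, and observe that once $\sigma(a_0)=a_0$ is imposed, the $\tilde\tau_0$-contributions cancel identically, leaving the single quadratic $y_{30}a_0^2 + (y_{40}-y_{10})a_0 - y_{20}=0$ over $k$ whose discriminant equals that of $\det(T-Y_0)$ up to the square factor $y_{30}^{-2}$; irreducibility then follows from maximality. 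This eliminates the $v_t(Q)$ case distinction and the extra substitution, and removes $\tau$ from the picture entirely. The one point to flag: ``the two quadratics share the same linear coefficient'' is literally true but not by itself what drives the Artin--Schreier step (the leading coefficients differ by $y_{30}$); what you actually need is that after normalizing both to $s^2+s+\lambda$ form, the two constants differ by $\frac{y_{10}y_{40}}{(y_{10}+y_{40})^2}$, which has $\Tr_{k/\bF_2}$ equal to zero since it is of the shape $u+u^2$ — this is precisely the identity you cite, so the argument closes, but the wording in between is slightly misleading.
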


\begin{proof}[Proof of Sublemma \ref{sublm:solutions_have_a_0_not_in_k}]
For any $r \geq 1$ and an element $X \in \bar{k}[t]/(t^r)$, denote by $X_0 \in \bar{k}$ the reduction of $X$ modulo $t$. Write $\tau_x = c_s^{-1}(x) \in T_{w,m}$. We handle the case $\ell > 0$ first. Write $\tau = 1 + \tilde{\tau}t^{\ell}$, $\tau_x = 1 + \tau_{x,\ell}t^{\ell} + \dots$.  As $a$ is a solution of \eqref{eq:channel_nr_3}, \eqref{eq:channel_nr_4}, $b = a + \frac{\tilde{\tau} - y_1}{y_3}$ is a solution of \eqref{eq:channel_nr_3strich_mit_b}, \eqref{eq:channel_nr_4strich_mit_b}. We have $a_0 = b_0 - \frac{\tilde{\tau}_0 - y_{10}}{y_{30}}$. 

Assume first $v_t(Q) > 0$. Maximality of $x$ (and hence of $\tau_x$) implies $\tau_{x,\ell} \not\in k$. Now, $v_t(Q) = v_t(p_x(\tau)) > 0$ is equivalent to $\tilde{\tau}_0 \equiv \tau_{x,\ell}$ or $\equiv \sigma(\tau_{x,\ell}) \mod t$. Hence $\tilde{\tau}_0 \not\in k$. On the other hand, the solution $b$ must satisfy $b_0 = 0$ and we have $y_{10},y_{30} \in k$. As $\tilde{\tau}_0 \not\in k$ we obtain $a_0 \not\in k$. 

Now assume $v_t(Q) = 0$ and suppose that $a_0 \in k$, i.e., $a_0^q = a_0$. Then for $b_0$ we must have:

\begin{equation}\label{eq:eq_for_b_0_expressing_a_0_ratl} 
b_0^q = b_0 + \frac{\tilde{\tau}_0^q - \tilde{\tau}_0}{y_{30}}. 
\end{equation}

\noindent Putting this into equation \eqref{eq:channel_nr_4strich_mit_b} $\mod t$, we deduce that $b_0$ must satisfy

\begin{equation}\label{eq:eq_for_b_0_to_show_nonrat} 
b_0^2 + \frac{\tilde{\tau}_0^q - \tilde{\tau}_0}{y_{30}}b_0 + \frac{Q_0}{y_{30}^2} = 0,
\end{equation}

\noindent where $Q_0 = (\tilde{\tau}_0 - \tau_{x,\ell})(\tilde{\tau}_0 - \sigma(\tau_{x,\ell}))$. By assumption we have $\det(\tau_x) = \det(x) = \det(\tau) \mod t^{\ell + 1}$, hence 

\begin{equation} \label{eq:det_cond_ausgeschrieben} 
\tilde{\tau}_0 + \sigma(\tilde{\tau}_0) = \tau_{x,\ell} + \sigma(\tau_{x,\ell}). 
\end{equation}

Assume first $\charac(k) > 2$. A computation shows that the discriminant of equation \eqref{eq:eq_for_b_0_to_show_nonrat} is $D = y_{30}^{-2} (\sigma(\tau_{x,\ell}) - \tau_{x,\ell})^2$ and hence the solutions of it are

\[ b_{0,\pm} = - \frac{\sigma(\tilde{\tau}_0) - \tilde{\tau}_0}{2y_{30}} \pm \frac{\sigma(\tau_{x,\ell}) - \tau_{x,\ell}}{2y_{30}}. \]

\noindent Putting any of this solutions into equation \eqref{eq:eq_for_b_0_expressing_a_0_ratl} shows $\tau_{x,\ell} = \sigma(\tau_{x,\ell})$, which is a contradiction to maximality of $x$. This finishes the proof in the case $\charac(k) > 2$.

Assume now $\charac(k) = 2$. Let $\mu = \frac{\tilde{\tau}_0^q + \tilde{\tau}_0}{y_{30}}$. Then $\mu \in k$. Further, \eqref{eq:det_cond_ausgeschrieben} shows $\mu \neq 0$ (otherwise, $\tau_{x,\ell} \in k$, which is a contradiction to maximality of $x$). Set also $\delta = \frac{Q_0}{y_{30}^2 \mu^2}$. Note that by \eqref{eq:det_cond_ausgeschrieben}, $Q_0 \in k$ and hence also $\delta \in k$.  Make the change of variables $b_0 = \mu s$, i.e., $b_0$ satisfies \eqref{eq:eq_for_b_0_to_show_nonrat}, \eqref{eq:eq_for_b_0_expressing_a_0_ratl} if and only if $s$ satisfies 
\begin{eqnarray*}
s^q + s + 1 &=& 0 \\
s^2 + s + \delta &=& 0.
\end{eqnarray*}

\noindent The second of these equations implies $s^q = s + \Tr_{k/\bF_2}(\delta)$. This together with the first equation implies $\Tr_{k/\bF_2}(\delta) = 1$. On the other hand, let $R = \frac{\tilde{\tau}_0 + \tau_{x, \ell}}{\tau_{x, \ell} + \tau_{x, \ell}^q}$. Using \eqref{eq:det_cond_ausgeschrieben}, we see that 
\[ R + R^2 = \frac{(\tilde{\tau}_0 + \tau_{x, \ell})(\tilde{\tau}_0 + \tau_{x, \ell}^q)}{(\tau_{x, \ell} + \tau_{x, \ell}^q)^2} = \delta. \]

\noindent This implies $\Tr_{k/\bF_2}(\delta) = 0$, which is a contradiction. This proves the lemma in the case $\charac(k) = 2$. 

Let us now consider the case $\ell(x) = 0$. By maximality of $x$, we have $x_3 \in (k[t]/t^r)^{\ast}$. If $\charac(k) > 2$, the variable change $a = b - x_3^{-1}(\tau - x_1)$ analogous to the one in the case $\ell(x) > 0$ leads to a very similar proof. Assume $\charac(k) = 2$. We have to show that equations \eqref{eq:channel_nr_4} $\mod t$ and $a_0^q = a_0$ do not have a common solution. Let 

\[ \lambda = x_{30}^{-1}(x_{10} + x_{40}) = x_{30}^{-1}(\tau_{x,0}^q + \tau_{x,0}), \]

\noindent Then $\lambda^q = \lambda$ and $\lambda \neq 0$ by maximality of $x$. Make the change of variables given by $a_0 = \lambda r + x_{30}^{-1}x_{10}$. Then $a_0^q = a_0$ transforms into $r^q = r$ and \eqref{eq:channel_nr_4} gets (after using $r^q = r$ and cancelling)

\[ x_{30}\lambda^2 r^2 + x_{30}\lambda^2 r + x_{30}^{-1} \det(x) = 0, \]

\noindent or equivalently

\[ r^2 + r + \frac{\tau_{x,0}^{q+1}}{(\tau_{x,0}^q + \tau_{x,0})^2} = 0. \]

\noindent We have to show that this equation has no solution in $k$. But observe that the two solutions of it are given by $\frac{\tau_{x,0}}{\tau_{x,0}^q + \tau_{x,0}}$ and $\frac{\tau_{x,0}^q}{\tau_{x,0}^q + \tau_{x,0}}$ lie in $k_2 \sm k$ (note that they are different by maximality of $x$). This finishes the proof also in this case.
\qedhere

\end{proof}


\subsection{Relation to strata} \label{sec:first_part_of_the_proof}

We will freely use the terminology of intertwining from \cite{BH} \S 11 and of strata and cuspidal inducing data from \cite{BH} Chapter 4. From results of Section \ref{sec:unipotent_action_and_central_char} we deduce that $R_{\chi}$ is irreducible, cuspidal and contains an unramified stratum. First we have the following general result.

\begin{prop}\label{prop:unram_stratum_is_contained}
Let $m \geq 0$ and let $\Xi$ be a $ZK$-representation, which restriction to $K$  is the inflation of an irreducible $K_m$-representation $\xi$, which do not contain the trivial character on $N_m^m$. Then the $G(F)$-representation $\Pi_{\Xi} = \cIndd_{ZK}^{G(F)} \Xi$ is irreducible, cuspidal and admissible. If $m > 0$, it contains an unramified simple stratum $(\fM,m,\alpha)$ for some $\alpha \in t^{-m}\fM$. Moreover, $\ell(\Pi_{\Xi}) = m$ and $\Pi_{\Xi}$ do not contain an essentially scalar stratum. In particular, for any character $\phi$ of $F^{\ast}$, one has $0 < \ell(\Pi_{\Xi}) \leq \ell(\phi \Pi_{\Xi})$.
\end{prop}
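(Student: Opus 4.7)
\textit{Plan of proof.} The plan proceeds in three stages: first show irreducibility, cuspidality and admissibility of $\Pi_\Xi$ by an intertwining computation; then translate the hypothesis on $N_m^m$ into the stratum language of \cite{BH}; and finally deduce the statement about twists.

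\textit{Paragraph 1: Irreducibility, cuspidality and admissibility.} Since $\xi$ is finite dimensional and $ZK \subseteq G(F)$ is open and compact modulo the center, $\Pi_\Xi = \cIndd_{ZK}^{G(F)} \Xi$ is automatically smooth and admissible. For irreducibility, I would use the Mackey criterion: $\Pi_\Xi$ is irreducible if and only if the intertwining set of $\Xi$ is contained in $ZK$. By the Cartan decomposition $G(F) = Z K \cdot \{ t^{(a,0)} : a \geq 0 \} \cdot K$, it suffices to compute intertwining by $g = t^{(a,0)}$ with $a > 0$. Conjugation sends $\matzz{1}{x}{0}{1}$ to $\matzz{1}{t^{-a}x}{0}{1}$, so one finds that $gZKg^{-1} \cap ZK$ intersects $N$ in $\matzz{1}{t^a \caO_F}{0}{1}$, on which ${}^g\Xi$ is trivial, while $\Xi$ restricted to $\matzz{1}{t^m\caO_F}{0}{1} \subseteq gZKg^{-1}\cap ZK$ (for $a$ in the relevant range) contains a non-trivial character by the hypothesis on $N_m^m$. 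This incompatibility forces the intertwining to vanish, proving irreducibility. Cuspidality follows by the same kind of Mackey analysis applied to the Jacquet module along the standard Borel: all double-coset contributions vanish because the non-triviality on $N_m^m$ obstructs any $N$-invariant vector from surviving.

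\textit{Paragraph 2: Identification of an unramified simple stratum.} Restricting $\Xi$ to $K^m$ gives, since $K^{m+1}$ acts trivially, a representation of $K^m/K^{m+1} \cong \fM/t\fM \cong M_2(k)$, hence decomposes as a sum of characters $\psi_\alpha\colon 1 + t^m x \mapsto \psi(\tr(\alpha x))$ indexed by $\alpha \in t^{-m}\fM/t^{-m+1}\fM$ under the trace pairing with a fixed additive character $\psi$ of $F$. Let $\Sigma \subseteq M_2(k)$ be the set of reductions $\bar\alpha$ occurring. Since $\Xi$ is a representation of the full $K$, the set $\Sigma$ is stable under the conjugation action of $K/K^1 = GL_2(k)$. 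The hypothesis ``$\xi$ contains no trivial character of $N_m^m$'' is equivalent to saying $\psi_\alpha|_{N_m^m} \neq 1$ for every $\alpha$ occurring, i.e.\ to the $(2,1)$-entry of every such $\bar\alpha$ being nonzero. Because $\Sigma$ is $GL_2(k)$-stable, this in turn means that no $GL_2(k)$-conjugate of any $\bar\alpha \in \Sigma$ is upper triangular. Running through the $GL_2(k)$-orbit classification of $M_2(k)$ (scalar; split semisimple; Jordan block; companion of an irreducible quadratic), only the last class has no upper-triangular representative. Hence every $\bar\alpha \in \Sigma$ has irreducible characteristic polynomial over $k$, so generates the unramified quadratic extension; this produces an unramified simple stratum $(\fM, m, \alpha)$ contained in $\Pi_\Xi$, and simultaneously rules out any essentially scalar stratum at level $m$ (which would require some $\bar\alpha \in \Sigma$ to be scalar). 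Since $\bar\alpha \neq 0$ for every $\alpha$ occurring, we have $\ell(\Pi_\Xi) = m$.

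\textit{Paragraph 3: Behavior under twisting.} If $\phi$ is any character of $F^\ast$, then the strata occurring in $\phi \Pi_\Xi$ at level $m$ are obtained from those of $\Pi_\Xi$ by adding a scalar determined by $\phi$ to each $\alpha$. Thus $\ell(\phi \Pi_\Xi) < \ell(\Pi_\Xi) = m$ would require the existence of some $\alpha \in \Sigma$ with $\bar\alpha$ scalar, i.e.\ an essentially scalar stratum of level $m$ in $\Pi_\Xi$; this was ruled out in Paragraph 2. Hence $0 < m = \ell(\Pi_\Xi) \leq \ell(\phi \Pi_\Xi)$.

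\textit{Main obstacle.} The most delicate point is the Mackey intertwining computation of Paragraph 1: one must track how the Iwahori filtration $K^\bullet_m$ and its unipotent piece $N_m^m$ behave under conjugation by $t^{(a,0)}$ for all relevant $a$, and confirm that the hypothesis on $N_m^m$ (rather than on some larger or differently-placed subgroup) really suffices to kill every non-trivial double coset. Once that is in place, the stratum bookkeeping in Paragraph 2 is a clean finite group argument and Paragraph 3 is formal.
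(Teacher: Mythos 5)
Your overall route matches the paper's: you compute the $G(F)$-intertwining of $\Xi$ over the Cartan double cosets and use non-triviality on $N_m^m$ to kill the non-identity ones; you use conjugation-stability of the set of strata to conclude that every occurring $\bar\alpha \in M_2(k)$ has no upper-triangular conjugate, hence is a companion of an irreducible quadratic; and you rule out level drop under twisting because no $\bar\alpha$ is scalar. The stratum-bookkeeping in Paragraph~2 and the twist argument in Paragraph~3 are fine (the paper phrases Paragraph~2 via a single conjugation lemma and cites BH~12.9 and BH~13.3 where you argue by hand, but the substance is the same).

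There are two points where the proposal falls short. First, your cuspidality argument is a genuine gap. Computing the Jacquet module of $\cIndd_{ZK}^{G(F)}\Xi$ along the Borel by ``Mackey analysis'' is not the one-liner suggested: the Bernstein--Zelevinsky geometric lemma gives a filtration, not a direct-sum decomposition, and the individual pieces are not simply coinvariant spaces on which one can read off non-vanishing from $N_m^m$. The clean route --- and the one the paper uses, via BH Theorem~11.4 --- is that cuspidality comes \emph{for free} once irreducibility is established: a representation compactly induced from an open compact-mod-center subgroup has compactly supported matrix coefficients mod~$Z$, and an irreducible smooth representation with this property is cuspidal. You should replace the Jacquet-module sketch by this (or a citation to BH~11.4), since as written the argument would not survive scrutiny.

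Second, the intertwining calculation in Paragraph~1 is garbled in a way your ``for $a$ in the relevant range'' parenthetical tacitly acknowledges. With $g=t^{(a,0)}$, $a>0$, the subgroup $N^m$ is generally \emph{not} contained in $gZKg^{-1}$ (that fails for $a>m$), so you cannot compare $\Xi|_{N^m}$ with ${}^g\Xi|_{N^m}$ directly. The correct comparison is on $gN^mg^{-1}\subseteq K^{m+1}$, where $\Xi$ is trivial while ${}^g\Xi$ restricts to $\Xi|_{N^m}\neq 1$ (equivalently, the paper's version: $m_\alpha^{-1}N^m m_\alpha\subseteq N^m$, so $N^m\subseteq ZK\cap m_\alpha ZK m_\alpha^{-1}$, and ${}^{m_\alpha}\Xi|_{N^m}$ is trivial while $\Xi|_{N^m}$ is not). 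The idea is right but needs this fix to cover all $a>0$.
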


\begin{cor}\label{cor:minimality_and_cont_an_unram_stratum}
Let $(E/F,\chi)$ be a minimal pair, such that $\chi$ has level $m \geq 0$. The representation $R_{\chi}$ is irreducible, cuspidal and admissible. Assume $m > 0$. Then the representation $R_{\chi}$ contains an unramified simple stratum. In particular, $\ell(R_{\chi}) = m$ and $R_{\chi}$ is unramified. Moreover, for any character $\phi$ of $F^{\ast}$, one has $0 < \ell(R_{\chi}) \leq \ell(\phi R_{\chi})$.
\end{cor}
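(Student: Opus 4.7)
The plan is to deduce the corollary as a direct application of Proposition \ref{prop:unram_stratum_is_contained} to $\Xi = \Xi_{\chi}$ and $\xi = \xi_{\chi}$. Recall from Section \ref{sec:def_of_reps} that we have the identification
\[ R_{\chi} = \cIndd\nolimits_{ZK}^{G(F)} \Xi_{\chi}, \]
and that $\xi_{\chi} = \Xi_{\chi}|_K$ is inflated from a representation of $K_m = K/K^{m+1}$. So I would just need to verify the two hypotheses of Proposition \ref{prop:unram_stratum_is_contained}: (a) $\xi_{\chi}$ is an irreducible $K_m$-representation, and (b) $\xi_{\chi}$ does not contain the trivial character on $N_m^m$.

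Point (a) is already established as Corollary \ref{cor:V_chi_irreducible}. Point (b) is immediate from Proposition \ref{prop:V_chi_as_U_m-representation}, which computes the restriction of $\xi_{\chi}$ to $N_m$ explicitly as $\bigoplus_{\psi \in N_m^{\vee,gen}} \psi$; by definition of $N_m^{\vee,gen}$, every $\psi$ appearing is non-trivial on $N_m^m$, so in particular the trivial character of $N_m^m$ does not occur. (Implicit here is the assumption $m>0$; for $m=0$ the set $N_m^m = N_m$ and no such restriction is needed, but the corollary only asserts the stratum statement when $m>0$.)

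Once (a) and (b) are in place, Proposition \ref{prop:unram_stratum_is_contained} yields directly all the claimed properties: irreducibility, cuspidality, and admissibility of $R_{\chi}$, and (for $m>0$) the existence of an unramified simple stratum $(\fM,m,\alpha)$ contained in $R_{\chi}$, the level equality $\ell(R_{\chi}) = m$, the absence of any essentially scalar stratum, and the inequality $0 < \ell(R_{\chi}) \leq \ell(\phi R_{\chi})$ for every character $\phi$ of $F^{\ast}$. The fact that $R_{\chi}$ is \emph{unramified} in the sense of \cite{BH} 20.1, 20.3 then follows because containing an unramified simple stratum and no essentially scalar stratum is precisely the characterization of unramified cuspidal representations (via the tame parametrization setup).

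There is essentially no obstacle here — the corollary is a verification exercise that packages together Corollary \ref{cor:V_chi_irreducible} and Proposition \ref{prop:V_chi_as_U_m-representation} into the hypothesis of Proposition \ref{prop:unram_stratum_is_contained}. The real work was done in Sections \ref{sec:unipotent_action_and_central_char} (to get the $N_m$-decomposition and irreducibility) and in the general statement of Proposition \ref{prop:unram_stratum_is_contained} itself, which encodes the standard Bushnell--Kutzko style argument identifying compactly induced representations with cuspidal inducing data on $ZK$.
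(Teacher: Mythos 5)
Your proposal is correct and matches the paper's proof, which likewise just observes that the hypotheses of Proposition \ref{prop:unram_stratum_is_contained} are satisfied for $\Xi_{\chi}$ and $\xi_{\chi}$ by Corollary \ref{cor:V_chi_irreducible} (irreducibility) and Proposition \ref{prop:V_chi_as_U_m-representation} (non-occurrence of the trivial character on $N_m^m$). You spell out these verifications and the final deduction of unramifiedness a bit more explicitly than the paper, but the route is the same.
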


\begin{proof} All assumptions of Proposition \ref{prop:unram_stratum_is_contained} are satisfied for the $ZK$-representation $\Xi_{\chi}$ and the corresponding $K_m$-representation $\xi_{\chi}$ by Corollary \ref{cor:V_chi_irreducible} and Proposition \ref{prop:V_chi_as_U_m-representation}. 
\end{proof}

\begin{proof}[Proof of Proposition \ref{prop:unram_stratum_is_contained}] Irreducibility and cuspidality of $\Pi_{\Xi}$ follow from \cite{BH} Theorem 11.4, which assumptions are satisfied due to irreducibility of $\Xi$ and Lemma \ref{lm:inter_spaces_comp}. Then admissibility follows from irreducibility (cf. e.g. \cite{BH} 10.2 Corollary). Now assume $m > 0$. To contain a stratum is a priori defined with respect to a choice of an additive character. So fix some $\psi \in F^{\vee}$ of level $1$ (i.e., $\psi|_{\caO_F}$ non-trivial, $\psi|_{t\caO_F}$ trivial). Then \cite{BH} 12.5 Proposition gives us an isomorphism (here we use $m > 0$):

\[ t^{-m}\fM/t^{-m+1}\fM \stackrel{\sim}{\longrar} (K^m/K^{m+1})^{\vee} = (K_m^m)^{\vee}, \quad a + t^{-m+1}\fM \mapsto \psi_a|_{K^m}, \]

\noindent where $\psi_a$ is given by $\psi_a(x) = \psi(\tr_{\fM}(a(x-1)))$, where $\tr_{\fM}$ is the trace map $\fM \rar \caO_F$. Explicitly, if $a = t^{-m}\matzz{a_1}{a_2}{a_3}{a_4} \in t^{-m}\fM$ and $x = 1 + t^m\matzz{x_1}{x_2}{x_3}{x_4} \in K_m^m$, then 

\begin{equation}\label{eq:explicitly_psi_fM_a} 
\psi_a(x) = \psi(a_1 x_1 + a_2 x_3 + a_3 x_2 + a_4 x_4). 
\end{equation}

We show that $\Pi_{\Xi}$ contains an unramified simple stratum. Therefore, note that $\Pi_{\Xi}$ contains the inflation to $K$ of the $K_m$-representation $\xi$. Thus it is enough to show that for any $\alpha \in t^{-m}\fM$, such that $\psi_{\alpha}$ is contained in $\xi$ on $K_m^m$, the stratum $(\fM,m,\alpha)$ is unramified simple. As in \cite{BH} 13.2, for $\alpha \in t^{-m}\fM$ we can write $\alpha = t^{-m}\alpha_0$ with $\alpha_0 \in \fM$ and let $f_{\alpha}(T) \in \caO_F[T]$ be the characteristic polynomial of $\alpha_0$. Let $\tilde{f}_{\alpha}(T)$ be its reduction modulo $t$. By definition, $(\fM,m,\alpha)$ is unramified simple if and only if $\tilde{f}_{\alpha}(T)$ is irreducible in $k[T]$, or equivalently, if and only if $\alpha_0 \mod t \in G(k)$ is not triangularizable. 


Let now $\alpha = t^{-m}\alpha_0$ be arbitrary such that $\xi$ contains $\psi_{\alpha}$ on $K_m^m$. It is enough to show that $\alpha_0 \mod t$ is not triangularizable. Suppose it is. Then there is some $\beta = t^{-m}\matzz{\beta_1}{\beta_2}{0}{\beta_4} \in t^{-m}\fM$ such that $g \alpha g^{-1} \equiv \beta \mod t^{-m+1}\fM$, i.e., $\psi_{g\alpha g^{-1}}$ and $\psi_{\beta}$ coincide on $K_m^m$. By Lemma \ref{lm:conjugacy_do_not_change_occurence}, $\psi_{\beta}$ also occurs in $\xi$ on $K_m^m$ and \eqref{eq:explicitly_psi_fM_a} shows that $\psi_{\beta}|_{N_m^m}$ is the trivial character of $N_m^m$. This is a contradiction to our assumption that $\xi$ do not contain the trivial character on $N_m^m$. This contradiction shows that $\Pi_{\Xi}$ contains an unramified simple stratum. As an unramified simple stratum is fundamental, \cite{BH} 12.9 Theorem shows that $\ell(\Pi_{\Xi}) = m$. 

Suppose now $(\fM,m^{\prime}, \alpha^{\prime})$ is some essentially scalar stratum contained in $\Pi_{\Xi}$. It has to intertwine with the previously found unramified simple stratum $(\fM, m, \alpha)$ contained in $\Pi_{\Xi}$ (cf. \cite{BH} 12.9). As essentially scalar strata are fundamental, \cite{BH} 12.9 Lemma 2 implies $m^{\prime} = m$. But in this case the above argumentation shows that $(\fM, m, \alpha^{\prime})$ is unramified simple and hence not essentially scalar. Finally, Theorem \cite{BH} 13.3 implies the last statement of the proposition.
\end{proof}

\begin{lm}\label{lm:inter_spaces_comp}
Let $\Xi$ be a $ZK$-representation, which restriction to $K$  is an inflation of a $K_m$-representation $\xi$, which do not contain the trivial character on $N_m^m$. An element $g \in G(F)$ intertwines $\Xi$ if and only if $g \in ZK$.
\end{lm}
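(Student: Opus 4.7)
The ``if'' direction is immediate: any $g \in ZK$ intertwines the $ZK$-representation $\Xi$. For the ``only if'' direction, my plan is to reduce $g$ to a standard shape via the Cartan decomposition and derive a contradiction from a character computation on an abelian filtration quotient.

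By $G(F) = K \cdot T(F) \cdot K$ and the fact that the intertwining of $\Xi$ by $g$ depends only on the double coset $(ZK)g(ZK)$ (since $\Xi$ is a $ZK$-representation), I would reduce to $g = \matzz{t^n}{}{}{1}$ with $n \geq 0$; the case $n=0$ gives $g \in ZK$. Assume $n \geq 1$, suppose $g$ intertwines $\Xi$ in the sense of \cite{BH} 11.1 with $\Xi^g(x) := \Xi(gxg^{-1})$, and treat first the case $m \geq 1$. Setting $H := K^m \cap g^{-1}K^m g$ and $H' := K^{m+1} \cap g^{-1}K^{m+1}g$, a direct matrix computation yields
\[ H = 1 + t^m \matzz{\caO_F}{\caO_F}{t^n\caO_F}{\caO_F}, \qquad H' = 1 + t^{m+1}\matzz{\caO_F}{\caO_F}{t^n\caO_F}{\caO_F}, \]
so $H/H' \cong k^4$ is abelian. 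Since $\Xi$ is inflated from $K/K^{m+1}$, both $\Xi|_H$ and $\Xi^g|_H$ factor through $H/H'$ and decompose into characters $\psi_a|_H$ (resp.\ $(\psi_b \circ c_g)|_H$, with $c_g(x)=gxg^{-1}$), where $\psi_a, \psi_b$ range over the characters of $K^m/K^{m+1}$ appearing in $\xi|_{K^m}$. A non-zero element in $\Hom_H(\Xi^g|_H, \Xi|_H)$ thus forces $\psi_a|_H = (\psi_b \circ c_g)|_H$ for some such pair $(a,b)$.

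Parametrize $H/H'$ by $(x_1, x_2, x_3', x_4) \in k^4$ via $1 + t^m \matzz{x_1}{x_2}{t^n x_3'}{x_4}$, write $a = t^{-m}\matzz{a_1}{a_2}{a_3}{a_4}$ (similarly for $b$), and invoke \eqref{eq:explicitly_psi_fM_a} together with the level-one property of $\psi$ (so $\psi(t^n \cdot) = 1$ for $n \geq 1$). A short calculation gives
\[ \psi_a|_{H/H'} = \psi(a_1 x_1 + a_3 x_2 + a_4 x_4), \qquad (\psi_b \circ c_g)|_{H/H'} = \psi(b_1 x_1 + b_2 x_3' + b_4 x_4); \]
the $a_2 x_3'$ term picks up an extra $t^n$ from the $(2,1)$-entry restriction on $H$, and the $b_3 x_2$ term picks up an extra $t^n$ from $c_g$, both being killed by $\psi$. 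Comparing the $x_2$-coefficients forces $a_3 \equiv 0 \pmod t$. But by \eqref{eq:explicitly_psi_fM_a}, $\psi_a|_{N_m^m}$ sends $1 + t^m\matzz{0}{u}{0}{0}$ to $\psi(a_3 u)$, so the hypothesis that $\xi$ contains no trivial character on $N_m^m$ is exactly $a_3 \not\equiv 0 \pmod t$ for every occurring $\psi_a$ --- contradicting the above.

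The $m = 0$ case reduces to the classical fact (Mautner--Harish-Chandra) that compact induction of an inflated cuspidal $K_0 = G(k)$-representation has intertwining set exactly $ZK$; see e.g.\ \cite{BH} Ch.\ 11. The main technical delicacy is the bookkeeping in the character calculation above --- once one tracks which entries in $H/H'$ survive both the $t^n$-shift from conjugation and the level-one cutoff of $\psi$, the $(2,1)$-entry of $a$ emerges as the unique obstruction, and the hypothesis forces the contradiction.
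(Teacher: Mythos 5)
Your proof is correct, and the key idea is the same as the paper's: reduce via Cartan decomposition to $g = \diag(t^n,1)$ with $n \geq 1$, and then use the hypothesis on $N_m^m$ to rule out intertwining. However, the execution is noticeably more roundabout than what the paper does. The paper restricts directly to the subgroup $N^m = \matzz{1}{t^m\caO_F}{0}{1} \subseteq ZK \cap {}^{g}(ZK)$ and observes two things in one line: on the one hand $\Xi|_{N^m}$ contains no trivial character by hypothesis; on the other hand $g^{-1}N^m g \subseteq K^{m+1}$ for $n \geq 1$ and $\Xi$ is trivial on $K^{m+1}$, so $^{g}\Xi|_{N^m}$ is trivial, hence $\Hom_{N^m}(\Xi,{}^{g}\Xi)=0$. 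No need to form $H = K^m \cap g^{-1}K^m g$ and $H'$, to identify $H/H' \cong k^4$, to invoke \eqref{eq:explicitly_psi_fM_a}, or to translate ``$\xi$ does not contain the trivial character on $N_m^m$'' into ``$a_3 \not\equiv 0 \pmod t$''. Your finer analysis does recover exactly this $(2,1)$-entry obstruction, so you are computing the same thing, just through a larger ambient abelian quotient. One concrete cost of your route: it does not cover $m=0$ uniformly (there $H/H'$ is not abelian), so you must invoke the classical intertwining computation for cuspidal representations of $G(k)$ separately, whereas the paper's one-line argument with $N^0$ works for $m = 0$ as well. Both are valid; the paper's is shorter, uniform in $m$, and avoids dualizing.
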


\begin{proof}
The property of intertwining only depend on the double coset $ZKgZK$ of $g$. By Cartan decomposition, a set of representatives of these cosets is given by the diagonal matrices $\{ m_{\alpha} = t^{(0,\alpha)} \colon \alpha \in \bZ_{\geq 0} \}$ (cf. e.g. \cite{BH} 7.2.2). Obviously, $m_0 = 1$ intertwines $\Xi$, so it is enough to check that if $\alpha > 0$, then $m_{\alpha}$ do not intertwine $\Xi$. Assume $\alpha > 0$. Then $ZK \cap {}^{m_{\alpha}}(ZK) = ZK \cap m_{\alpha}ZK m_{\alpha}^{-1}$ contains the subgroup $N^m = \matzz{1}{t^m\caO_F}{}{1}$, on which $\Xi$ do not contain the trivial character, and on the other hand we have

\[ {}^{m_{\alpha}}\Xi \matzz{1}{g}{}{1} = \Xi(m_{\alpha}^{-1} \matzz{1}{g}{}{1} m_{\alpha}) = \Xi\matzz{1}{t^{\alpha}g}{}{1}, \]

\noindent i.e., ${}^{m_{\alpha}}\Xi$ restricted to $N^m$ is the trivial representation (as $\alpha > 0$ and $\Xi$ is trivial on $K^{m+1}$). Hence

\[ \Hom_{ZK \cap {}^{m_{\alpha}} (ZK)}(\Xi,{}^{m_{\alpha}} \Xi) \subseteq \Hom_{N^m}(\Xi,{}^{m_{\alpha}} \Xi) = 0. \qedhere \]
\end{proof}

\begin{lm}\label{lm:conjugacy_do_not_change_occurence}
Let $a \in t^{-m}\fM$, $g \in K$. If $\psi_{a}$ occurs in $\xi$ on $K_m^m$, then $\psi_{gag^{-1}}$ occurs in $\xi$ on $K_m^m$.
\end{lm}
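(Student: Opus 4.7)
The plan is to use the standard fact that conjugation by elements of $K_m$ permutes the characters of the normal abelian subgroup $K_m^m$ and correspondingly permutes the isotypic components appearing in $\xi$.

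First I would verify the key identity $\psi_{gag^{-1}}(x) = \psi_a(g^{-1}xg)$ for $x \in K_m^m$. This is an immediate consequence of the definition $\psi_a(x) = \psi(\tr_{\fM}(a(x-1)))$ together with the invariance of the trace under conjugation: indeed, $\tr_{\fM}(gag^{-1}(x-1)) = \tr_{\fM}(a \, g^{-1}(x-1)g) = \tr_{\fM}(a(g^{-1}xg-1))$. Note that $K_m^m = K^m/K^{m+1}$ is normal in $K_m$, so $g^{-1}xg \in K_m^m$ whenever $x \in K_m^m$, making the right-hand side well defined.

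Next, starting from a nonzero $v \in V_\xi$ on which $K_m^m$ acts through $\psi_a$, I would consider the vector $w = \xi(g)v$ (using that $\xi$ is a representation of the full group $K_m$, into which $g \in K$ maps via the inflation). For $h \in K_m^m$ one computes
\[
\xi(h)w = \xi(hg)v = \xi(g)\xi(g^{-1}hg)v = \psi_a(g^{-1}hg)\,\xi(g)v = \psi_{gag^{-1}}(h)\,w,
\]
using the key identity in the last step. Hence $w$ is a nonzero vector in $V_\xi$ on which $K_m^m$ acts through $\psi_{gag^{-1}}$, so this character occurs in $\xi|_{K_m^m}$.

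There is no real obstacle; the lemma is purely formal, relying only on (i) the normality of $K_m^m$ in $K_m$, (ii) the conjugation-invariance of the trace on $\fM$, and (iii) the fact that $\xi$ is a representation of all of $K_m$ (so $\xi(g)$ transports $\psi_a$-isotypic vectors to $\psi_{gag^{-1}}$-isotypic ones).
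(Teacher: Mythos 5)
Your proof is correct and follows essentially the same route as the paper's: establish the identity $\psi_{gag^{-1}}(x) = \psi_a(g^{-1}xg)$ via conjugation-invariance of $\tr_{\fM}$, then transport a $\psi_a$-eigenvector by $\xi(g)$ to obtain a $\psi_{gag^{-1}}$-eigenvector. Your explicit remarks on normality of $K_m^m$ and well-definedness are implicit in the paper's argument but are sensible to spell out.
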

\begin{proof} For $x \in K_m^m$ one has:
\[ \psi_{gag^{-1}}(x) = \psi(\tr_{\fM}(gag^{-1}(x-1))) = \psi(\tr_{\fM}(ag^{-1}(x-1)g )) = \psi(\tr_{\fM}( a(g^{-1}xg - 1) )) = \psi_a(g^{-1}xg). \]

Let $V$ denote the space in which $\xi$ acts. For simplicity we write $x.v$ instead of $\xi(x)(v)$ for $x \in K_m, v \in V$. Let $v \in V$, such that $x.v = \psi_a(x)v$ for all $x \in K_m^m$. Then for all $x\in K_m^m$ we have:
\[ g^{-1}x.(g.v) = g^{-1}xg.v = \psi_a(g^{-1}xg)v = \psi_{gag^{-1}}(x)v. \]

\noindent Thus $x.(g.v) = \psi_{gag^{-1}}(x)(g.v)$, i.e., on the linear span of $g.v$ any element $x \in K_m^m$ acts as the scalar $\psi_{gag^{-1}}(x)$. In particular, $\psi_{gag^{-1}}$ occurs in $\xi$ on $K_m^m$.
\end{proof}


\subsection{Relation to cuspidal inducing data} \label{sec:second_part_of_the_proof}

Now we want to compare our construction to the construction in \cite{BH} \S19 of representations attached to minimal pairs. For the convenience of the reader and to have appropriate notations, we briefly recall their set up (\cite{BH} \S15,\S19). Let $\psi$ be some fixed (additive) character of $F$ of level one. Let $\psi_E = \psi \circ \tr_{E/F}$, $\psi_{\fM} = \psi \circ \tr_{\fM}$. Let  $(E/F, \chi)$ be a minimal pair. Let $m > 0$ be the level of $\chi$. Let $\alpha \in \fp_E^{-m}$ be such that $\chi(1 + x) = \psi_E(\alpha x)$ for $x \in \fp_E^{\lfloor \frac{m}{2}\rfloor + 1}$. Choose an $F$-embedding  $E \har M_{22}(F)$ such that $E^{\ast} \subseteq ZK$ (not to be confused with $\iota_E$ from the beginning of Section \ref{sec:def_of_reps}). Then $(\fM, m, \alpha)$ is an unramified simple stratum. Let then 

\[ J_{\alpha} = E^{\ast}K^{\lfloor \frac{m + 1}{2} \rfloor } \]

\noindent be an open subgroup of $ZK$. Moreover, via the embedding of $E$ into $M_{22}(F)$, $\alpha$ defines (\cite{BH} 12.5) a character $\psi_{\alpha}$ of $K^{\lfloor \frac{m}{2} \rfloor + 1}$, which is trivial on $K^{m+1}$ (thus inducing a character of $K_m^{\lfloor \frac{m}{2} \rfloor + 1}$). Let $C(\psi_{\alpha}, \fM)$ be the set of isomorphism classes of all irreducible representations $\Lambda$ of $J_{\alpha}$, such that $\Lambda$ contains the character $\psi_{\alpha}$ on $K^{\lfloor \frac{m}{2} \rfloor + 1}$, or equivalently (by \cite{BH} 15.3 Theorem), $\Lambda|_{K^{\lfloor \frac{m}{2} \rfloor + 1}}$ is a multiple of $\psi_{\alpha}$. 

For any $\Lambda \in C(\psi_{\alpha}, \fM)$, the triple $(\fM, J_{\alpha}, \Lambda)$ is a (in our case, unramified) \emph{cuspidal type} in $G(F)$ in the sense of \cite{BH} 15.5 Definition. An equivalent reformulation is given in terms of \emph{cuspidal inducing data} (\cite{BH} 15.8): the cuspidal inducing datum attached to $(\fM, J_{\alpha}, \Lambda)$ is the pair $(\fM, \Xi)$, where $\Xi = \Indd_{J_{\alpha}}^{ZK} \Lambda$. The $G(F)$-representation $\cIndd_{J_{\alpha}}^{G(F)} \Lambda =\cIndd_{ZK}^{G(F)} \Xi$ attached to $(\fM, J_{\alpha}, \Lambda)$, resp. to $(\fM, \Xi)$ is then irreducible and cuspidal. 

Out of the given minimal pair $(E/F,\chi)$ one constructs now the representation $\Lambda$ of $J_{\alpha}$, and thus gets a corresponding cuspidal type. We have two different cases.

\textbf{Case $m$ odd.} (\cite{BH} 19.3) Then $\lfloor \frac{m}{2} \rfloor + 1 = \lfloor \frac{m + 1}{2} \rfloor$. Let $\Lambda$ be the character of $J_{\alpha}$ defined by
\begin{equation}\label{eq:Lambda_for_BH_constr_m_odd} 
\Lambda|_{K^{\lfloor \frac{m+1}{2} \rfloor} } = \psi_{\alpha}, \quad \Lambda|_{E^{\ast}} = \chi
\end{equation}

\noindent (this is a consistent definition, as one sees from $\tr_{\fM}|_{\caO_E} = \tr_E|_{\caO_E}$, $E^{\ast} \cap K^{\lfloor \frac{m+1}{2} \rfloor} = U_E^{\lfloor \frac{m+1}{2} \rfloor}$). 

\textbf{Case $m>0$ even.} (\cite{BH} 15.6, 19.4) Let $J_{\alpha}^1 = J_{\alpha} \cap K^1 = U_E^1 K^{\lfloor \frac{m+1}{2} \rfloor}$, $H_{\alpha}^1 = U_E^1 K^{\lfloor \frac{m}{2}\rfloor + 1}$. Then $J_{\alpha}^1 \supsetneq H_{\alpha}^1$. Let $\theta$ be the character of $H_{\alpha}^1$ defined (as in the odd case) by

\[ \theta(ux) = \chi(u)\psi_{\alpha}(x), \quad u \in U_E^1, x \in K^{\lfloor \frac{m}{2} \rfloor +1}. \]

\noindent Let $\eta$ be the unique irreducible ($q$-dimensional) $J_{\alpha}^1$-representation containing $\theta$. Let $\mu_M$ denote the group of roots unity of a field $M$ and let $\tilde{\eta}$ be the unique irreducible representation of $\mu_E/\mu_F \ltimes J_{\alpha}^1$ such that $\tilde{\eta}|_{J_{\alpha}^1} \cong \eta$ and $\tr\, \tilde{\eta}(\zeta u) = - \theta(u)$ for all $u \in H_{\alpha}^1, \zeta \in \mu_E/\mu_F \sm \{1\}$. Then $\tilde{\eta}$ factors through a representation of $\mu_E/\mu_F \ltimes J_{\alpha}^1/\ker(\theta)$. Let $\nu$ be the representation of $E^{\ast} \ltimes J_{\alpha}^1/\ker(\theta)$ which arises by inflation from $\tilde{\eta}$ via the surjection induced by $E^{\ast} \tar E^{\ast}/F^{\ast} U_E^1 \cong \mu_E/\mu_F$. Let $\tilde{\chi}$ be the character of $E^{\ast} \ltimes J_{\alpha}^1/\ker(\theta)$, which is $\chi$ on $E^{\ast}$ and trivial on $J_{\alpha}^1/\ker(\theta)$. Define the $E^{\ast} \ltimes J_{\alpha}^1/\ker(\theta)$-representation $\tilde{\Lambda} = \tilde{\chi} 
\otimes \nu$. 
It factors through the surjection $E^{\ast} \ltimes J_{\alpha}^1/\ker(\theta) \tar J_{\alpha}/\ker(\theta)$, $(e,j) \mapsto ej \mod \ker(\theta)$, hence it is an inflation of a representation $\Lambda_1$ of $J_{\alpha}/\ker(\theta)$. Take $\Lambda$ to be the inflation of $\Lambda_1$ to $J_{\alpha}$. 

Let then in both cases $(\fM,\Theta_{\chi})$ be the corresponding cuspidal inducing datum, i.e., 

\begin{equation}\label{eq:BH_def_of_cusp_ind_datum} 
\Theta_{\chi} = \Indd\nolimits_{J_{\alpha}}^{ZK} \Lambda. 
\end{equation}

\noindent Thus we attached a cuspidal inducing datum to $\chi$ and now the $G(F)$-representation $\pi_{\chi}$ from \eqref{eq:tameparamthm_BH} is defined in \cite{BH} 19.4.2 as

\[ \pi_{\chi} = \cIndd\nolimits_{ZK}^{G(F)} \Theta_{\chi} = \cIndd\nolimits_{J_{\alpha}}^{G(F)} \Lambda. \]

\begin{prop}\label{prop:comparison_with_BH_tame_param} 
Let $(E/F,\chi)$ be a minimal pair. Then $R_{\chi} \cong \pi_{\chi}$.
\end{prop}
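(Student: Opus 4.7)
The overall strategy is to show $\Xi_{\chi} \cong \Theta_{\chi}$ as $ZK$-representations; transitivity of compact induction then yields $R_{\chi} \cong \pi_{\chi}$. Both representations are irreducible (for $\Xi_{\chi}$ by Corollary \ref{cor:V_chi_irreducible}; for $\Theta_{\chi}$ by the cuspidal type theory of \cite{BH} \S 15), so by Schur's lemma it suffices to produce a non-zero $ZK$-intertwiner. By Frobenius reciprocity
\[
\Hom_{ZK}(\Theta_{\chi}, \Xi_{\chi}) = \Hom_{J_{\alpha}}(\Lambda, \Xi_{\chi}|_{J_{\alpha}}),
\]
so the task reduces to exhibiting a copy of $\Lambda$ inside $\Xi_{\chi}|_{J_{\alpha}}$, provided dimensions are matched.

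First I would verify the dimension equality. On one side $\dim \Xi_{\chi} = \dim \xi_{\chi} = (q-1)q^m$ by Proposition \ref{prop:V_chi_as_U_m-representation}. On the other side, writing $r = \lfloor (m+1)/2 \rfloor$, a straightforward computation using $|GL_2(\caO_F/t^r)|$ and $[U_E : U_E^r]$ gives $[ZK : J_{\alpha}] = (q-1)q^{2r-1}$; multiplied by $\dim \Lambda$ (which equals $1$ if $m$ is odd and $q$ if $m$ is even) this yields $(q-1)q^m$ in both cases, so any nonzero intertwiner will automatically be an isomorphism.

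For odd $m$, where $\Lambda$ is a character extending $\chi$ on $E^{\ast}$ by $\psi_{\alpha}$ on $K^r$, I would invoke Theorem \ref{thm:H_m-decomposition_of_pi_chi} with $\psi = \tilde{\chi}$ (so $i(\psi) = 0$ and $m - i(\psi) = m$ is odd) to obtain a one-dimensional $\tilde{\chi}$-isotypic line $\bar{k} v \subseteq V_{\chi}$ under the non-split torus $H_m$. Since $E^{\ast} \subseteq ZK \subseteq F^{\ast}K$ normalizes $\fM$ and hence $K^r$, and since $E$ is commutative so $E^{\ast}$ fixes each $\psi_{\beta}$ with $\beta \in E$, the line $\bar{k} v$ is $K^r$-stable; consequently $K^r$ acts on $v$ by some character $\psi_{\beta}$. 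Combining the forced agreement $\chi|_{U_E^r} = \psi_{\beta}|_{U_E^r}$ with the containment of an unramified simple stratum from Proposition \ref{prop:unram_stratum_is_contained} and the fact that all unramified simple strata contained in a given cuspidal representation lie in a single $\fM^{\ast}$-conjugacy class (\cite{BH} \S 13--15), I would conclude $\psi_{\beta} = \psi_{\alpha}$ after possibly adjusting the embedding $E \hookrightarrow M_{22}(F)$ within its $ZK$-conjugacy class. Thus $J_{\alpha}$ acts on $v$ via $\Lambda$, producing the desired intertwiner.

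For even $m$, $\Lambda$ is $q$-dimensional and built via the Heisenberg construction from the character $\theta$ of $H_{\alpha}^1 = U_E^1 K^{m/2+1}$ restricting to $\chi$ on $U_E^1$ and $\psi_{\alpha}$ on $K^{m/2+1}$. My plan is first to show $\theta \hookrightarrow \Xi_{\chi}|_{H_{\alpha}^1}$, using Theorem \ref{thm:H_m-decomposition_of_pi_chi} at one level deeper (characters $\psi$ of $H_m$ with $i(\psi) = 1$, i.e., agreeing with $\tilde{\chi}$ on $H_m^1 = U_E^1/U_E^{m+1}$, satisfy $m - i(\psi) = m-1$ odd and so do occur in $\xi_{\chi}$ with multiplicity one) combined with the stratum identification as in the odd case; by the uniqueness of the irreducible $J_{\alpha}^1$-representation containing $\theta$ (\cite{BH} 15.6) this would yield $\eta \hookrightarrow \Xi_{\chi}|_{J_{\alpha}^1}$. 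Extending to $J_{\alpha}$ then amounts to identifying the $E^{\ast}$-action on the $\eta$-isotypic subspace with $\chi$, which follows by comparing central characters via Lemma \ref{lm:red_to_adm_pairs} and matching with the detailed $H_m$-trace data from Section \ref{sec:trace_III_Hm}. The main obstacle is this even case: identifying the full Heisenberg piece rather than a single character, and verifying that the ``tame twist'' by $\chi$ in the BH extension matches the $E^{\ast}$-action arising from the geometric construction, is where the comparison between the intrinsic geometric torus data and the algebraic BH recipe must be made most carefully.
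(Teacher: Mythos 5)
Your strategy is genuinely different from the paper's. The paper argues indirectly: having established in Proposition \ref{prop:unram_stratum_is_contained} that $R_{\chi}$ is irreducible, cuspidal and contains an unramified simple stratum, it invokes the \emph{surjectivity} of the Bushnell--Henniart tame parametrization to produce a minimal pair $(E/F,\chi')$ with $R_{\chi}\cong\pi_{\chi'}$ and hence $\Xi_{\chi}\cong\Theta_{\chi'}$; it then forces $\chi'=\chi$ or $\chi^{\sigma}$ by matching two separate characterizations of the parametrizing character from the $ZK$-representation itself, namely Corollary \ref{cor:xi_chi_determines_chi} on the geometric side and Lemma \ref{lm:cusp_ind_datum_cont_chi_exactly_once} (which rests on the sharpened intertwining statement Proposition \ref{prop:stronger_version_of_intertwining_theorem}) on the Bushnell--Henniart side. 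You instead try to build the intertwiner directly by exhibiting $\Lambda$ inside $\Xi_{\chi}|_{J_{\alpha}}$. This would be a more explicit route if it worked, and your dimension count and Frobenius-reciprocity framing are sound, but the central step fails as written.

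The gap is in the claim that the $\tilde{\chi}$-isotypic line $\bar{k}v\subseteq V_{\chi}$ for $H_m$ is $K^r$-stable. Your justification---that $E^{\ast}$ normalizes $K^r$ and fixes the characters $\psi_{\beta}$ with $\beta\in E$---concerns the $E^{\ast}$-action on the dual of $K^r$; it does not say that $K^r$ preserves the $H_m$-isotypic decomposition of $V_{\chi}$. Indeed $H_m$ and $K^r$ do not commute in $K_m$: for $h\in U_E$ and $g=1+t^rx\in K^r$ one computes $[h,g]\equiv 1+t^r(hxh^{-1}-x)\bmod K^{m+1}$, which is nontrivial in general, so $g\cdot v$ has no reason to remain in the $\tilde{\chi}$-line. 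Establishing that this line is in fact $\psi_{\alpha}$-isotypic under $K^{\lfloor m/2\rfloor+1}$ is precisely the comparison between the geometric $H_m$-data and the algebraic stratum data that the proposition is really about; appealing to ``adjusting the embedding within its $ZK$-conjugacy class'' does not discharge it. For even $m$ the situation is worse, as you acknowledge: even granting $\theta\hookrightarrow\Xi_{\chi}|_{H_{\alpha}^1}$, matching the $E^{\ast}$-action on the Heisenberg piece with the twist $\tilde{\chi}\otimes\nu$ requires essentially re-deriving Lemma \ref{lm:cusp_ind_datum_cont_chi_exactly_once}. You also omit the $m=0$ case, which the paper treats separately via \cite{Iv} and \cite{BH} 19.1.
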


Using Proposition \ref{prop:comparison_with_BH_tame_param}, we can prove our main result.

\begin{proof}[Proof of Theorem \ref{thm:main_thm}] By Lemma \ref{lm:red_to_adm_pairs} we can assume that $(E/F, \chi)$ is minimal in the first statement of the theorem. If $\ell(\chi) = 0$, then the first statement follows essentially from \cite{Iv} Theorem 1.1(i). If $\ell(\chi) > 0$, then the first statement follows from Corollary \ref{cor:minimality_and_cont_an_unram_stratum} and the part about the central character follows from Lemma \ref{lm:red_to_adm_pairs}.

To show $R_{\chi} \cong \pi_{\chi}$ we can assume by Lemma \ref{lm:red_to_adm_pairs} (along with the fact that $\phi \pi_{\chi} = \pi_{\phi_E \chi}$) that $(E/F,\chi)$ is minimal. Then $R_{\chi} \cong \pi_{\chi}$ follows from Proposition \ref{prop:comparison_with_BH_tame_param}. Now bijectivity of \eqref{eq:R_realization} follows from bijectivity of \eqref{eq:tameparamthm_BH}.\qedhere
\end{proof}


\begin{proof}[Proof of Proposition \ref{prop:comparison_with_BH_tame_param}] Let $m$ be the level of $\chi$. If $m = 0$, the proposition follows essentially from \cite{Iv} Theorem 1.1(i) and \cite{BH} 19.1. Assume $m > 0$. The unramified representation $R_{\chi}$ is induced from the cuspidal inducing datum $(ZK,\Xi_{\chi})$. As the map \eqref{eq:tameparamthm_BH} in the tame parametrization theorem is surjective, there is some character $\chi^{\prime}$ such that $(E/F, \chi^{\prime})$ is minimal and $R_{\chi} \cong \pi_{\chi^{\prime}}$. By Corollary \ref{cor:minimality_and_cont_an_unram_stratum}, $\ell(\chi^{\prime}) = m$. One deduces $\Xi_{\chi} \cong \Theta_{\chi^{\prime}}$ (e.g. by the same reasoning as in the proof of Lemma \ref{lm:inter_spaces_comp}). We have to show that $\chi = \chi^{\prime}$ or $\chi = (\chi^{\prime})^{\sigma}$. A comparison of the central characters shows $\chi|_{F^{\ast}} = \chi^{\prime}|_{F^{\ast}}$. Thus it remains to show that $\chi|_{U_E} = \chi^{\prime}|_{U_E}$ or $\chi|_{U_E} 
= (\chi^{\prime})^{\sigma}|_{U_E}$. The $K$-representation $\Xi_{\chi}|_
K$ is inflated from the $K_m$-representation $\xi_{\chi}$. Note that the image of $U_E$ in $K_m$ is a non-split torus $H_m$, as 
considered in Theorem \ref{thm:H_m-decomposition_of_pi_chi}. Thus $\chi|_{U_E}, \chi^{\sigma}|_{U_E}$ are the unique characters among all $U_E$-characters of level $m$, which satisfy condition (i) resp. (ii) of Corollary \ref{cor:xi_chi_determines_chi} if $m$ odd resp. even. Thus it is enough to show that $\Theta_{\chi^{\prime}}|_K$ characterizes $\chi^{\prime}|_{U_E}$ in the same way. This is the content of Lemma \ref{lm:cusp_ind_datum_cont_chi_exactly_once}. 
\end{proof}

\begin{lm}\label{lm:cusp_ind_datum_cont_chi_exactly_once}
Let $\chi$ be a character of $E^{\ast}$ of level $m > 0$ such that $(E/F, \chi)$ is a minimal pair. 
\begin{itemize}
\item[(i)] If $m$ is odd, the representation $\Theta_{\chi} = \Indd_{J_{\alpha}}^{ZK} \Lambda$ (cf. \eqref{eq:BH_def_of_cusp_ind_datum}) contains the character $\chi$ on $E^{\ast}$ (exactly once) and do not contain all the characters $\chi^{\prime}$ of $E^{\ast}$, which satisfy $\chi^{\prime}|_{U_E} \neq \chi|_{U_E}$ and $\chi^{\prime}|_{F^{\ast} U_E^1} = \chi|_{F^{\ast} U_E^1}$.
\item[(ii)] If $m$ is even, the representation $\Theta_{\chi} = \Indd_{J_{\alpha}}^{ZK} \Lambda$ (cf. \eqref{eq:BH_def_of_cusp_ind_datum}) do not contain the character $\chi$ on $E^{\ast}$ and it contains all the characters $\chi^{\prime}$ of $E^{\ast}$, which satisfy $\chi^{\prime}|_{U_E} \neq \chi|_{U_E}$ and $\chi^{\prime}|_{F^{\ast} U_E^1} = \chi|_{F^{\ast} U_E^1}$.
\end{itemize}
\end{lm}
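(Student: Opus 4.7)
The argument rests on Mackey's formula applied to $\Theta_\chi = \Indd_{J_\alpha}^{ZK}\Lambda$ restricted to $E^{\ast} \subseteq J_\alpha$. Since $F^{\ast} \subseteq J_\alpha$, the index $[ZK:J_\alpha] = [K : J_\alpha \cap K]$ is finite, and
\[
\Theta_\chi|_{E^{\ast}} = \bigoplus_{g \in E^{\ast} \backslash ZK / J_\alpha}\Indd_{E^{\ast} \cap gJ_\alpha g^{-1}}^{E^{\ast}}\bigl({}^g\Lambda|_{E^{\ast}\cap gJ_\alpha g^{-1}}\bigr).
\]
I will (i) identify the trivial-coset contribution as giving precisely the characters claimed to appear in each case, and (ii) show that no non-trivial double coset contributes any character of the form $\chi\eta$ with $\eta \in (\mu_E/\mu_F)^{\vee}$.

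For (i): the trivial double coset gives $\Lambda|_{E^{\ast}}$. In the odd case this equals $\chi$ by the definition \eqref{eq:Lambda_for_BH_constr_m_odd}. In the even case, $\Lambda$ is the inflation of $\Lambda_1$, which descends from $\tilde{\Lambda} = \tilde{\chi} \otimes \nu$ on $E^{\ast} \ltimes J_\alpha^1/\ker\theta$, with $\nu$ factoring through $E^{\ast} \twoheadrightarrow \mu_E/\mu_F$. The defining trace formula $\tr\,\tilde{\eta}(\zeta u) = -\theta(u)$ for $\zeta \in \mu_E/\mu_F \setminus\{1\}$, $u \in H_\alpha^1$, together with $\dim\tilde{\eta} = q$, identifies $\tilde{\eta}|_{\mu_E/\mu_F}$ with the reduced regular representation $\bigoplus_{\eta\neq 1}\eta$. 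Hence $\Lambda|_{E^{\ast}} = \bigoplus_{\eta \neq 1}\chi\eta$, where the sum is over nontrivial characters of $\mu_E/\mu_F$. This already establishes the ``contains'' halves of both (i) and (ii) of the lemma.

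For (ii): fix $g \in K$ representing a non-trivial double coset and set $j = \lfloor m/2\rfloor + 1$. Since $K^j$ is normal in $K$ and $K^j \subseteq J_\alpha$, the subgroup $U_E^j$ lies in $E^{\ast} \cap gJ_\alpha g^{-1}$, and on it ${}^g\Lambda(u) = \psi_\alpha(g^{-1}ug) = \psi_{g\alpha g^{-1}}(u)$. Frobenius reciprocity together with triviality of any $\eta \in (\mu_E/\mu_F)^{\vee}$ on $U_E^1 \supseteq U_E^j$ shows that the appearance of $\chi\eta$ would force $\psi_\alpha \equiv \psi_{g\alpha g^{-1}}$ on $U_E^j$. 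Decomposing $g\alpha g^{-1} = \alpha_g + \alpha_g^{\perp}$ orthogonally with respect to the trace form decomposition $M_2(F) = E \oplus E^{\perp}$, the $E^{\perp}$-part contributes trivially on $U_E$, so the condition reduces to $\alpha_g - \alpha \in t^{-\lfloor m/2\rfloor}\caO_E$. If $g$ lies in the normalizer $\N_G(E^{\ast}) = E^{\ast}\langle \sigma_G \rangle$ but outside $J_\alpha$, then $\alpha_g = \sigma(\alpha)$, and the condition forces the $t^{-m}$-coefficient of $\alpha$ to lie in $\caO_F$; this contradicts minimality of $(E/F,\chi)$, which is equivalent to that coefficient lying outside $\caO_F$. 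For the remaining non-normalizing cosets, I invoke the intertwining theorem \cite{BH}~15.1 (asserting $I_G(\Lambda) = J_\alpha$ for the cuspidal type attached to the unramified simple stratum $(\fM,m,\alpha)$) together with the regularity of $\alpha$, whose $G$-centralizer is $E^{\ast}$, to rule out the approximation $\alpha_g \approx \alpha$ of the required depth.

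The principal obstacle will be this last step: \cite{BH}~15.1 controls ${}^g\psi_\alpha$ on the full subgroup $K^j$, whereas the argument only uses the weaker condition on $U_E^j$. Translating between these---showing that for non-normalizing $g \notin J_\alpha$ the $E$-projection $\alpha_g$ cannot match $\alpha$ modulo $t^{-\lfloor m/2\rfloor}\caO_E$---requires a careful filtration argument exploiting the regularity of $\alpha$ and the transversality of the $E$-projection map to orbits of the $E^{\ast}$-centralizer.
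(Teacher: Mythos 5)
You have correctly identified the structure of the argument (Mackey's formula applied to $\Theta_\chi|_{E^{\ast}}$, with the trivial double coset contributing exactly the characters claimed in each parity case), and your identification of the trivial-coset contribution agrees with the paper's: for $m$ odd the character $\chi$ alone via \eqref{eq:Lambda_for_BH_constr_m_odd}, and for $m$ even the reduced regular representation $\bigoplus_{\eta\neq 1}\chi\eta$ via \cite{BH}~19.4~Proposition. The gap you flag at the end is, however, a genuine and essential one, and it is precisely the point on which the paper's proof turns.

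Concretely, the vanishing of the non-trivial Mackey summands requires showing that if $g \in ZK$ normalizes the character $\psi_\alpha$ restricted to the \emph{small} group $U_E^{\lfloor m/2\rfloor + 1}$ (rather than to all of $K^{\lfloor m/2\rfloor+1}$), then already $g \in J_\alpha$. The intertwining theorem \cite{BH}~15.1 gives this conclusion only from the condition on $K^{\lfloor m/2\rfloor+1}$, which is strictly stronger input, so invoking it together with heuristics about regularity of $\alpha$ and ``transversality of the $E$-projection'' does not close the argument --- this is exactly what you acknowledge. The paper fills this gap by proving a genuine strengthening, Proposition~\ref{prop:stronger_version_of_intertwining_theorem}, which rests on Lemma~\ref{lm:stronger_version_of_lemma_in_intertwining_thm} (itself improving \cite{BH}~Lemma~16.2). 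That lemma proceeds by induction on $k \leq \lfloor (m+1)/2\rfloor$ to reduce to $k=1$, and then settles the base case by an explicit matrix computation in $G(k)$ split into the cases $\Char(k)>2$ and $\Char(k)=2$, using the precise shape of $Y + t\fM/t\fM$ (the annihilator of $U_E^m/U_E^{m+1}$ under the pairing). There is no shortcut through \cite{BH}~15.1 here: your proof would need to carry out this stronger intertwining computation (or an equivalent one) to be complete.

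One secondary remark: the orthogonal decomposition $M_2(F)=E\oplus E^\perp$ you invoke does streamline the translation of ``$\psi_\alpha \equiv \psi_{g\alpha g^{-1}}$ on $U_E^j$'' into ``$\alpha_g \equiv \alpha \bmod t^{-\lfloor m/2\rfloor}\caO_E$,'' and the special handling of $g\in\N_G(E^\ast)\setminus J_\alpha$ via $\alpha_g = \sigma(\alpha)$ and minimality is correct. But the cosets with $g$ outside the normalizer are where the real work lies, and that is precisely what Lemma~\ref{lm:stronger_version_of_lemma_in_intertwining_thm} handles case-by-case in the base step $k=1$.
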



\begin{proof}
Let first $m > 0$ be arbitrary and let $\chi^{\prime}$ be a character of $E^{\ast}$, satisfying $\chi^{\prime}|_{F^{\ast} U_E^1} = \chi|_{F^{\ast} U_E^1}$. Mackey formula and Frobenius reciprocity show:

\[ \Hom_{E^{\ast}}(\chi^{\prime}, \Theta_{\chi}) = \bigoplus\limits_{g \in E^{\ast} \backslash ZK / J_{\alpha}} \Hom_{E^{\ast} \cap {}^g J_{\alpha}}(\chi^{\prime},  {}^g \Lambda ). \]

\noindent Let $g \in ZK$.  We claim that $\Hom_{E^{\ast} \cap {}^g J_{\alpha}}(\chi^{\prime},  {}^g \Lambda ) = 0$, unless $g \in J_{\alpha}$. Indeed, we have $E^{\ast} \cap {}^g J_{\alpha} \supseteq U_E^{\lfloor \frac{m}{2} \rfloor + 1}$ and $\Lambda|_{K^{\lfloor \frac{m}{2} \rfloor + 1}}$ is a multiple of $\psi_{\alpha}$, hence ${}^g\Lambda|_{K^{\lfloor \frac{m}{2} \rfloor + 1}}$ is a multiple of $\psi_{g^{-1}\alpha g}$. Moreover, $\chi^{\prime}|_{U_E^{\lfloor \frac{m}{2} \rfloor + 1 }} = \chi|_{U_E^{\lfloor \frac{m}{2} \rfloor + 1 }} = \psi_{\alpha}$. Thus if $\Hom_{E^{\ast} \cap {}^g J_{\alpha}}(\chi^{\prime},  {}^g \Lambda ) \neq 0$, then $g$ normalizes the character $\psi_{\alpha}$ of $U_E^{\lfloor \frac{m}{2} \rfloor + 1}$. Thus Proposition \ref{prop:stronger_version_of_intertwining_theorem} shows our claim.

The claim implies that $\Hom_{E^{\ast}}(\chi^{\prime}, \Theta_{\chi}) = \Hom_{E^{\ast}}(\chi^{\prime}, \Lambda)$. In particular, if $m$ is odd, we are ready, because then $\Lambda$ is one-dimensional and $\Lambda|_{E^{\ast}} = \chi$. Assume $m$ is even. By construction, $\Lambda$ arises by an inflation process from the $E^{\ast} \ltimes J_{\alpha}^1/\ker(\theta)$-representation $\tilde{\Lambda} = \tilde{\chi} \otimes \nu$, where $\tilde{\chi}$ agrees with $\chi$ on $E^{\ast}$ and is trivial on $J_{\alpha}^1/\ker(\theta)$. So, it is enough to prove the following claim: $\nu|_{E^{\ast}}$ do not contain the trivial character of $U_E$, but it contains all non-trivial characters of $U_E$, which are trivial on $U_F U_E^1$. The restriction of $\nu$ to $E^{\ast}$ is the inflation via $E^{\ast} \tar E^{\ast}/F^{\ast}U_E^1 \cong \mu_E/\mu_F$ of the restriction to $\mu_E/\mu_F$ of the $\mu_E/\mu_F \ltimes J_{\alpha}^1$-representation $\tilde{\eta}_{\theta}$. In particular, $\nu|_{U_F U_E^1}$ is trivial. Now \cite{BH} 
19.4 Proposition shows 
that $\tilde{\eta_{\theta}}|_{\mu_E/\mu_F} = \Regul_{\mu_E/\mu_F} - 1_{\mu_E/\mu_F}$, and the claim follows. 
\end{proof}

The following proposition is an improvement of a part of the Intertwining theorem \cite{BH} 15.1. Also Lemma \ref{lm:stronger_version_of_lemma_in_intertwining_thm} below improves \cite{BH} Lemma 16.2

\begin{prop}\label{prop:stronger_version_of_intertwining_theorem}
Let $g \in ZK$. Then $g$ normalizes the character $\psi_{\alpha}$ of $U_E^{\lfloor \frac{m}{2} \rfloor + 1}$ if and only if $g \in J_{\alpha}$.
\end{prop}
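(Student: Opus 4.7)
The ``if'' direction is a direct computation. Given $g = eh \in J_\alpha$ with $e \in E^\ast$ and $h \in K^{\lceil m/2 \rceil}$, and $u \in U_E^{\lfloor m/2 \rfloor + 1}$, I would use the identity $\psi_\alpha(gug^{-1}) = \psi(\tr_\fM(g^{-1}\alpha g \cdot (u-1)))$ combined with $e^{-1}\alpha e = \alpha$ to reduce the claim to checking $\psi(\tr_\fM((h^{-1}\alpha h - \alpha)(u-1))) = 1$. Expanding $h^{-1}\alpha h - \alpha = h^{-1}[\alpha, h - 1]$ and using the filtration bounds $\alpha \in t^{-m}\fM$, $h - 1 \in t^{\lceil m/2\rceil}\fM$, and $u - 1 \in \fp_E^{\lfloor m/2\rfloor + 1}$, the product lies in $t\fM$, whose trace is in $\fp_F$ and is annihilated by $\psi$.

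For the ``only if'' direction, since $Z \subseteq J_\alpha$ I may assume $g \in K$. Setting $\beta := g^{-1}\alpha g - \alpha$, the hypothesis translates into $\tr_\fM(\beta \cdot \caO_E) \subseteq t^{-\lfloor m/2\rfloor}\caO_F$. I would use the orthogonal decomposition $M_2(F) = E \oplus E s$, where $s \in G(F)$ normalizes $E$ and induces the nontrivial automorphism $\sigma$ of $E/F$ by conjugation; since $\tr_\fM(E \cdot s) = 0$, writing $\beta = \beta_1 + \beta_2 s$ with $\beta_j \in E$ reduces the condition to $\tr_{E/F}(\beta_1 \caO_E) \subseteq t^{-\lfloor m/2\rfloor}\caO_F$, which, as $E/F$ is unramified with trivial different, is equivalent to $\beta_1 \in \fp_E^{-\lfloor m/2\rfloor}$. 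Crucially, the $Es$-component of $\beta$ remains unconstrained; this makes our hypothesis strictly weaker than testing against all of $K^{\lfloor m/2\rfloor + 1}$, and explains why the Intertwining theorem \cite{BH} 15.1 does not apply directly.

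To still conclude $g \in E^\ast K^{\lceil m/2\rceil} = J_\alpha$, I would run a successive-approximation argument on the $K$-filtration. Inductively assume $g \in e_i K^{i+1}$ with $e_i \in U_E$ and $0 \leq i < \lceil m/2\rceil$; the base case $i = 0$ uses minimality of $(E/F,\chi)$ to force $\bar g$ into the centralizer $k_2^\ast$ of $\bar\alpha$ inside $G(k)$. Writing $g = e_i(1 + t^{i+1}(y_1 + y_2 s))$ with $y_j \in \caO_E$ and expanding $\beta$ as a power series, the key observation is that the first-order commutator $[\alpha, y_1 + y_2 s] = y_2(\alpha - \sigma(\alpha)) s$ lies entirely in $Es$, so the $E$-component of $\beta$ appears only at second order and is proportional to $y_2 \sigma(y_2)(\alpha - \sigma(\alpha))$. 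The constraint $\beta_1 \in \fp_E^{-\lfloor m/2 \rfloor}$, combined with $v_E(\alpha - \sigma(\alpha)) = -m$, forces $v_E(y_2)$ to grow by one at each step, while $y_1$ is absorbed into the refinement $e_{i+1} \in U_E$; iterating yields the claim.

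The main obstacle is precisely this inductive bookkeeping: BH's Lemma~16.2, designed for the larger test subgroup $K^{\lfloor m/2\rfloor + 1}$, collapses the $E$- and $Es$-contributions and is too coarse here. The sharpened Lemma~\ref{lm:stronger_version_of_lemma_in_intertwining_thm} will need to track these components separately through the higher-order terms of the power series, exploiting the non-degeneracy of the norm map $y_2 \mapsto y_2\sigma(y_2)$ modulo $\fp_E$. Residue characteristic two, which the paper handles throughout, needs extra care because several auxiliary trace identities degenerate there.
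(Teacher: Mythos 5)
The ``if'' direction is correct, and your analysis of ``only if'' correctly isolates the key structural point: the hypothesis sees only the $E$-component of $\beta = g^{-1}\alpha g - \alpha$, and the first-order commutator $[\alpha, y_2 s]$ lies in $Es$. But the successive-approximation argument cannot close. Writing $g = e_i(1 + t^{i+1}(y_1 + y_2 s))$ and $\beta_0 := g^{-1}\alpha_0 g - \alpha_0$ with $\alpha_0 = t^m\alpha$, the $E$-component of $\beta_0$ has $t$-order at least $2(i+1)$, with leading term $\epsilon\, t^{2(i+1)} y_2\sigma(y_2)(\alpha_0 - \sigma(\alpha_0))$ where $\epsilon = s^2 \in \caO_F^\ast$. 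The hypothesis only puts this $E$-component into $t^{\lceil m/2\rceil}\caO_E$, so the bound you get on $y_2$ is $2(i+1) + 2\,\mathrm{ord}_t(y_2) \geq \lceil m/2\rceil$, which becomes \emph{vacuous} once $2(i+1) \geq \lceil m/2\rceil$, i.e., roughly halfway through the induction. You therefore cannot reach $i = \lceil m/2\rceil - 1$, which is what $g \in U_E K^{\lceil m/2\rceil}$ requires; this is a valuation gap, and the non-degeneracy of the norm $y_2 \mapsto y_2\sigma(y_2)$ does not help.

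In fact the ``only if'' direction appears to fail for $m \geq 3$: take any integer $j$ with $\lceil m/2\rceil/2 \leq j < \lceil m/2\rceil$ (possible iff $m \geq 3$) and set $g = 1 + t^j s \in K^j$. Using $s\alpha_0 = \sigma(\alpha_0)s$ and $s^2 = \epsilon$ one computes
\[
\beta_0 \;=\; \frac{t^j(\alpha_0 - \sigma(\alpha_0))s \;+\; \epsilon\, t^{2j}(\alpha_0 - \sigma(\alpha_0))}{1 - \epsilon t^{2j}},
\]
whose $E$-component lies in $t^{2j}\caO_E \subseteq t^{\lceil m/2\rceil}\caO_E$; so $g$ normalizes $\psi_\alpha$ on $U_E^{\lfloor m/2\rfloor+1}$, yet $g \notin U_E K^{\lceil m/2\rceil}$ since $s \notin \caO_E + t\fM$ and $j < \lceil m/2\rceil$. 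The paper's own proof, via Lemma~\ref{lm:stronger_version_of_lemma_in_intertwining_thm}, is afflicted by the same defect: the inductive step there deduces $\alpha_0 g_0 \equiv g_0\alpha_0 \bmod Y + t\fM$ and infers $g_0 \in \caO_E + t\fM$ ``by the induction hypothesis'', but for $g_0 = y_1 + y_2 s$ one always has $[\alpha_0, g_0] = y_2(\alpha_0 - \sigma(\alpha_0))s \in \caO_E s \subseteq Y$, so that congruence imposes no condition whatsoever on $y_2$; only the explicitly verified base case $k=1$ of that lemma is genuinely constraining. Your proposal, like the paper's, has no mechanism to control the $Es$-component of $g$ beyond the first filtration step.
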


\begin{proof} We can assume $g \in K$. Let $X$ be the appropriate quotient of $t^{-m} \fM/ t^{-\lfloor \frac{m}{2} \rfloor} \fM$ such that the following diagram commutes

\begin{equation}
\begin{gathered}
\begin{xy}\label{diag:character_isos_diag}
\xymatrix{
t^{-m} \fM/ t^{-\lfloor \frac{m}{2} \rfloor} \fM \ar[r]^{\sim} \ar@{->>}[d] & (K_m^{\lfloor \frac{m}{2} \rfloor + 1})^{\vee} \ar@{->>}[d] \\
X \ar[r]^(.3){\sim} & (U_E^{\lfloor \frac{m}{2} \rfloor + 1}/ U_E^{m+1})^{\vee}
}
\end{xy}
\end{gathered}
\end{equation}

\noindent where the upper horizontal map is $\alpha \mapsto \psi_{\alpha}$ with $\psi_{\alpha}$ as in \cite{BH} 12.5, and the right vertical map is restriction of characters. Let $Y \subseteq \fM$ be such that $t^{-m}Y \subseteq t^{-m}\fM$ is the preimage in $t^{-m}\fM$ of the kernel of the left vertical map. Then $g$ normalizes $\psi_{\alpha}|_{U_E^{\lfloor \frac{m}{2} \rfloor + 1}}$ if and only if the images of $g^{-1}\alpha g$ and $\alpha$  in $X$ coincide, i.e., if the following equation holds true in $t^{-m}\fM$:

\[g^{-1} \alpha g \equiv \alpha \mod t^{-\lfloor \frac{m}{2} \rfloor} \fM + t^{-m}Y.  \]

\noindent Then the result follows from Lemma \ref{lm:stronger_version_of_lemma_in_intertwining_thm} applied to $k = \lfloor \frac{m+1}{2} \rfloor$.
\end{proof}

\begin{lm} \label{lm:stronger_version_of_lemma_in_intertwining_thm} Write $\alpha = t^{-m} \alpha_0$. With notations as in the proof of Proposition \ref{prop:stronger_version_of_intertwining_theorem}, for any $1 \leq k \leq \lfloor \frac{m+1}{2} \rfloor$, we have 

\begin{equation} \label{eq:congruency_mod_tkM_ttY}
g^{-1}\alpha_0 g \equiv \alpha_0 \mod t^k \fM + Y 
\end{equation}
in $\fM$ if and only of $g \in U_E + t^k \fM$.
\end{lm}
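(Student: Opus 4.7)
The plan is induction on $k$, after a structural reformulation of the congruence. First I would set up the orthogonal decomposition $\fM = \caO_E \oplus \caO_E^{\perp}$ with respect to the reduced trace form, noting that $\caO_E^{\perp}$ is a two-sided $\caO_E$-submodule of $\fM$ and, specifically for $G = \GL_2$, that $\caO_E^{\perp} \cdot \caO_E^{\perp} \subseteq \caO_E$ (this comes from the crossed-product presentation $M_2(F) = E \oplus E\xi$ with $\xi^2 \in F^*$ and $\xi e \xi^{-1} = \sigma(e)$). A direct unwinding of the definition of $Y$ identifies $Y = t^{\lceil m/2 \rceil}\caO_E + \caO_E^{\perp}$, and since $1 \leq k \leq \lfloor \frac{m+1}{2} \rfloor = \lceil m/2 \rceil$, we get $t^k\fM + Y = t^k\caO_E + \caO_E^{\perp}$. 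Thus the congruence in the lemma is equivalent to the single condition $\pi_E(g^{-1}\alpha_0 g - \alpha_0) \in t^k\caO_E$, where $\pi_E \colon \fM \to \caO_E$ denotes the projection along $\caO_E^{\perp}$.

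The $(\Leftarrow)$ direction is then immediate: writing $g = u(1 + t^k u^{-1}h)$ with $u \in U_E$, $h \in \fM$ and using that $u \in \caO_E$ commutes with $\alpha_0$ gives $g^{-1}\alpha_0 g - \alpha_0 = (1 + t^k u^{-1}h)^{-1}[\alpha_0, t^k u^{-1}h] \in t^k\fM$. For the $(\Rightarrow)$ direction I would induct on $k$. The base case $k = 1$ is the most geometric step: reducing modulo $t$, the condition forces $\bar g^{-1}\bar\alpha_0 \bar g - \bar\alpha_0 \in k_2^{\perp} \subset M_2(k)$. Since $(\fM, m, \alpha)$ is an unramified simple stratum, $\bar\alpha_0$ generates $k_2/k$, so elements of its $\GL_2(k)$-conjugacy class are characterized by their characteristic polynomial. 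Computing in coordinates identifies $-\det|_{k_2^{\perp}}$ with the norm form of $k_2/k$, which is anisotropic because $k_2/k$ is a field extension (in both even and odd residue characteristic). Hence the only conjugate of $\bar\alpha_0$ congruent to it modulo $k_2^{\perp}$ is $\bar\alpha_0$ itself, so $\bar g$ centralizes $\bar\alpha_0$ and lies in $k_2^*$, giving $g \in U_E + t\fM$.

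For the inductive step, applying the hypothesis at level $k-1$ lets me write $g = u + t^{k-1}h$ with $u \in U_E$, $h \in \fM$. Decomposing $u^{-1}h = e + h^{\perp}$ along $\caO_E \oplus \caO_E^{\perp}$ and absorbing $u(1 + t^{k-1}e) \in U_E$ (legitimate since $k \geq 2$ makes $1 + t^{k-1}e$ a principal unit), I reduce to $g = u_1 + t^{k-1}h_1^{\perp}$ with $h_1^{\perp} \in \caO_E^{\perp}$. Setting $y = t^{k-1}u_1^{-1}h_1^{\perp}$, I would expand $g^{-1}\alpha_0 g - \alpha_0 = t^{k-1}(1+y)^{-1}[\alpha_0, h_1^{\perp}]$ as a geometric series and track components along $\caO_E$ and $\caO_E^{\perp}$ termwise using $[\alpha_0, h_1^{\perp}] \in \caO_E^{\perp}$, $\caO_E^{\perp} \cdot \caO_E^{\perp} \subseteq \caO_E$, and $\caO_E \cdot \caO_E^{\perp} \subseteq \caO_E^{\perp}$. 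The level-$k$ hypothesis imposes a constraint on $h_1^{\perp}$ modulo $t\caO_E^{\perp}$ which, after reduction modulo $t$, becomes a quadratic relation in $\bar h_1^{\perp}$ and $\bar\alpha_0$. The main obstacle is showing this relation forces $\bar h_1^{\perp} = 0$, so that $h_1^{\perp} \in t\caO_E^{\perp}$ and hence $g \in U_E + t^k\fM$; this is established by a non-degeneracy argument combining the anisotropy from the base case with the maximality of $\bar\alpha_0$ inherited from the unramified-simpleness of $(\fM, m, \alpha)$.
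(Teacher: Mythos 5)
Your structural reformulation is an attractive alternative to the paper's argument: identifying $Y = t^{\lceil m/2\rceil}\caO_E \oplus \caO_E^{\perp}$ and rewriting the congruence as $\pi_E(g^{-1}\alpha_0 g - \alpha_0) \in t^k\caO_E$ (with $\pi_E$ the projection along $\caO_E^{\perp}$) is a clean repackaging, and the base case via anisotropy of the $E/F$-norm form is a conceptual version of the paper's explicit $2\times 2$ computation that treats $\charac k = 2$ and $\charac k \neq 2$ uniformly. One point you should make explicit in the base case: the constraint on $v = \bar g^{-1}\bar\alpha_0\bar g - \bar\alpha_0 \in k_2^{\perp}$ coming from equality of characteristic polynomials reduces purely to $\det(v)=0$ only because the polar term $\tr(\bar\alpha_0)\tr(v)-\tr(\bar\alpha_0 v)$ vanishes, and this follows from $\tr(v)=\tr(\bar\alpha_0 v)=0$ for $v\in k_2^{\perp}$.

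The inductive step, however, cannot work as you describe. After absorbing the $\caO_E$-part you have $g = u_1(1+t^{k-1}h_1^{\perp})$ with $h_1^{\perp}\in\caO_E^{\perp}$, and the goal is for the level-$k$ congruence to force $\bar h_1^{\perp}=0$. Tracking components in $(1+y)^{-1}[\alpha_0, u_1^{-1}h_1^{\perp}]$ with $y = t^{k-1}u_1^{-1}h_1^{\perp}$, the $\caO_E$-components arise only from odd powers of $y$ (via $\caO_E^{\perp}\cdot\caO_E^{\perp}\subseteq\caO_E$), and the leading one is $-t^{k-1}(u_1^{-1}h_1^{\perp})[\alpha_0, u_1^{-1}h_1^{\perp}]$. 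After the overall $t^{k-1}$ prefactor, $\pi_E(g^{-1}\alpha_0 g - \alpha_0)$ starts at order $t^{2(k-1)}$, and since $2(k-1)\geq k$ for $k\geq 2$ this lies in $t^k\caO_E$ regardless of $\bar h_1^{\perp}$. The level-$k$ hypothesis therefore imposes no constraint, and there is no ``quadratic relation'' to exploit. Concretely, with $\charac k\neq 2$, $m=4$, $k=2$, $\alpha_0 = \matzz{0}{1}{D}{0}$, and $g = 1 + t\matzz{1}{0}{0}{-1}$, one computes $\pi_E(g^{-1}\alpha_0 g - \alpha_0) = \frac{2t^2}{1-t^2}\alpha_0 \in t^2\caO_E$, so $g$ satisfies \eqref{eq:congruency_mod_tkM_ttY}, while $g\notin U_E + t^2\fM$. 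The paper's own inductive step hits the same obstruction in a different guise: the $\psi$-pairing argument there derives $[\alpha_0, g_0]\equiv 0 \bmod Y + t\fM$ and then appeals to the $k=1$ case, but $[\alpha_0, g_0]$ lies in $\caO_E^{\perp}\subseteq Y$ for every $g_0\in\fM$, so this derived congruence is vacuous; moreover it concerns the commutator $[\alpha_0, g_0]$ rather than $g_0^{-1}\alpha_0 g_0 - \alpha_0$, which is what the base case actually controls.
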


\begin{proof}
The 'if' part is immediate. To prove the other part, we use induction on $k$ (as in \cite{BH} 16.2 Lemma). Let $k \geq 2$ and assume \eqref{eq:congruency_mod_tkM_ttY}. By induction hypothesis, $g \in U_E + t^{k-1}\fM$. We can write $g = g_1 (1+ t^{k-1}g_0)$ with $g_1 \in U_E$. Thus (as $\alpha_0 \in \caO_E$) we obtain from \eqref{eq:congruency_mod_tkM_ttY}:

\[ t^{k-1} \alpha_0 g_0 \equiv t^{k-1} g_0 \alpha_0 \mod t^k \fM + Y. \]

\noindent Thus $t^{k-1} (\alpha_0 g_0 - g_0 \alpha_0) = y + t^k m \in \fM$ for some $y \in Y$, $m \in \fM$. We deduce $y = t^{k-1}y^{\prime}$ with $y^{\prime} \in \fM$ and $\alpha_0 g_0 - g_0 \alpha_0 = y^{\prime} + tm$. We claim that $y^{\prime} \in Y + t\fM$. Indeed, this claim is equivalent to $\psi_{t^{-m} y^{\prime}}|_{U_E^m/U_E^{m+1}} \equiv 1$. But for $u \in \fM$ we have:

\[ \psi_{t^{-m}y^{\prime}}(1 + t^m u) = \psi(\tr_{\fM}(y^{\prime}u)) = \psi_{t^{-m}y}(1 + t^{m - (k-1)} u) = 1, \]

\noindent where the last equality holds as long $m - (k-1) \geq \lfloor \frac{m}{2} \rfloor + 1$, or equivalently, $k \leq \lfloor \frac{m+1}{2} \rfloor$, which is satisfied by assumption of the Lemma. This shows our claim. From it we deduce $\alpha_0 g_0 \equiv g_0 \alpha_0 \mod Y + t\fM$, i.e., by induction hypothesis, $g_0 \in \caO_E + t \fM$. Thus we are reduced to the case $k = 1$. We handle this case explicitly. The result remains unaffected if we replace the embedding $j \colon E \har M_2(F)$ by a conjugate one. As all such embeddings are $G(F)$-conjugate, we can assume that $j(\caO_E) \mod t \subseteq \fM/t\fM$ is generated as a $k$-algebra by a matrix $\beta = \matzz{}{-b}{1}{-a}$ for some $a,b \in k$ such that the characteristic polynomial $T^2 + aT + b$ is irreducible in $k[T]$ (cf. e.g. \cite{BH} 5.3). Then $\alpha = t^{-m}\alpha_0$ with $\alpha_0 \mod t = x + y\beta$ for some $x,y \in k$ and $j(\caO_E) = \caO_F[\alpha_0]$. After adding and multiplying by some central elements (which do not 
affect the condition \ref{eq:congruency_mod_tkM_ttY}), we can assume that either $\charac(k) > 2$ and there is a $D \in k^{\ast} \sm k^{\ast,2}$ such that $\alpha_0 = \matzz{}{1}{D}{}$ and or that $\charac(k) = 2$ and there is a $D \in k$ such that $T^2 + T + D \in k[T]$ is irreducible and $\alpha_0 = \matzz{}{D}{1}{1}$. We have to show that if $g \in K$ and \eqref{eq:congruency_mod_tkM_ttY} holds for $g,\alpha_0$ and $k = 1$, then $g \in \caO_E + t\fM$.

Assume first $\charac(k) > 2$. The upper horizontal map in diagram \eqref{diag:character_isos_diag} induces the isomorphism

 
\[ t^{-m}\fM/t^{-m + 1} \fM \stackrel{\sim}{\longrar} (K_m^m)^{\vee}, \]

\noindent which shows that
\begin{eqnarray*} Y + t\fM / t\fM  &=& \{ t^m \beta \colon \beta \in t^{-m}\fM \text{ and } \psi(\tr_{\fM}(\beta(e - 1))) = 1 \text{ for all } e \in 1 + t^m \caO_E \}/t\fM \\
 &=& \{ \matzz{B_1}{B_2}{-B_2 D}{-B_1} \colon B_1,B_2 \in k \}
\end{eqnarray*}

\noindent (the last equality is an easy computation). Now let $g = \matzz{g_1}{g_2}{g_3}{g_4} \in G(k)$ (we can work modulo $t$). Then condition \eqref{eq:congruency_mod_tkM_ttY} translates into

\[ \frac{1}{\det(g)} \matzz{g_3g_4 - g_1g_2D}{-g_2^2D + g_4^2}{g_1^2D - g_3^2}{g_1g_2D - g_3g_4} = g^{-1} \alpha_0 g \stackrel{!}{=} \matzz{}{1}{D}{} + \matzz{B_1}{B_2}{-B_2 D}{-B_1} \]

\noindent for some $B_1,B_2 \in k$. In particular, we must have

\[
\begin{cases} 
\frac{1}{\det(g)} (g_4^2 - g_2^2 D) = 1 + B_2 \\
\frac{1}{\det(g)} (g_1^2 D - g_3^2) = (1 - B_2)D.
\end{cases}
\]

\noindent Computing $B_2$ from the first equation and inserting it in the second, gives us

\[\frac{1}{\det(g)} (g_1^2 - g_3^2 - g_2^2 D^2 + g_4^2 D) = 2D, \]

\noindent which is equivalent to

\[ D(g_1 - g_4)^2 = (g_3 - g_2D)^2. \]

\noindent If both side are non-zero, on the left side we have a non-square in $k^{\ast}$ and on the right side we have a square, which is a contradiction. Thus both sides are zero, i.e. $g_1 = g_4$, $g_3 = g_2D$, i.e., $g \in U_E \mod t$, finishing the proof in the case $\charac(k) > 2$.

Assume now $\charac(k) = 2$. Analogously to the previous case we deduce

\[Y + t\fM/t\fM = \left\{  \matzz{B_1}{B_1 + B_3 D}{B_3}{B_1} \colon B_1,B_3 \in k \right\}.  \] 

A similar computation as above implies that for $g \in G(k)$ satisfying condition \eqref{eq:congruency_mod_tkM_ttY} we must have

\begin{eqnarray*}
\det(g)^{-1} (g_1g_2 + g_2g_3 + g_3g_4D) &=& B_1 \\
\det(g)^{-1} (g_1^2 + g_1g_3 + g_3^2 D) &=& 1 + B_3 \\
\det(g)^{-1} (g_2^2 + g_2g_4 + g_4^2 D) &=& B_1 + B_3 D + D.
\end{eqnarray*}

Putting the first and the second equation into the third and bringing some terms together shows

\[ g_2^2 + g_3^2 D^2 = (g_1^2 + g_4^2)D + g_2(g_4 + g_1 + g_3) + g_3 D(g_4 + g_1). \]

\noindent Add $2g_3^2 D = 0$ to the right side of this equation and let $A = g_2 + g_3 D$ and $B = g_1 + g_3 + g_4$. The equation is then equivalent to

\[ A^2 + B^2 D + AB = 0. \]

\noindent Suppose $B \neq 0$. Dividing by $B^2$ we obtain $(A/B)^2 + (A/B) + D = 0$, which is a contradiction to irreduciblility of $T^2 + T + D \in k[T]$, as $A/B \in k$. Thus $B = 0$ and we deduce also $A = 0$, which finishes the proof also in the case $\charac(k) = 2$.

\end{proof}


\renewcommand{\refname}{References}

\end{document}